\theoremstyle{plain}
\newtheorem{theorem}{Theorem}[section]
\newtheorem{lemma}[theorem]{Lemma}
\newtheorem{prepos}[theorem]{Proposition}
\newtheorem{corol}[theorem]{Corollary}
\theoremstyle{definition}
\newtheorem{remark}[theorem]{Remark}
\newtheorem{conjecture}{Conjecture}
\def\sgn{\mathop{\rm sign}\nolimits}
\def\sgn{\mathop{\rm sign}\nolimits}
\def\sgn{\mathop{\rm sign}\nolimits}
\newcommand{\eqdef}{\stackrel{def}{=}}
\def\sgn{\mathop{\rm sign}\nolimits}
\begin{document}




\author[M.~Tyaglov]{Mikhail Tyaglov}

\address{School of Mathematical Sciences, Shanghai Jiao Tong University}
\email{tyaglov@sjtu.edu.cn}

\author[M.~Atia]{Mohamed J. Atia}

\address{Qassim university, College of Sciences, Buraydah, Saudi Arabia}
\email{jalel.atia@gmail.com}

\title[Non-real  zeroes of a homogeneous differential polynomial]{On the number of non-real  zeroes of a homogeneous differential polynomial
and a generalization of the Laguerre inequalities}

\keywords {Zeroes of polynomials; non-real zeroes; Laguerre inequalities}

\subjclass{Primary 12D10; Secondary 26C10; 26C15; 30C15; 30C10}

\begin{abstract}
Given a real polynomial $p$ with only real zeroes, we find upper and lower bounds for the number of non-real zeroes
of the differential polynomial
$$
F_{\varkappa}[p](z)\eqdef p(z)p''(z)-\varkappa[p'(z)]^2,
$$
where $\varkappa$ is a real number.

We also construct a counterexample to a conjecture by B.~Shapiro~\cite{Shapiro} on the number of real zeroes
of the polynomial $F_{\tfrac{n-1}{n}}[p](z)$ in the case when the real polynomial
$p$ of degree $n$ has non-real zeroes. We formulate some new conjectures
generalising the Hawaii conjecture.
\end{abstract}

\date{\small \today}

\maketitle

\vspace{2cm}

\setcounter{equation}{0}
\section{Introduction}

Let $p$ be a real polynomial with only real zeroes. Then
\begin{equation}\label{Laguere.inequality}
p(x)p''(x)-[p'(x)]^2\leqslant0,\qquad x\in\mathbb{R}.
\end{equation}
This inequality is called the \textit{Laguerre inequality}. It is well known that the entire functions of the Laguerre-P\'olya
class\footnote{The class of entire functions that are uniform limits on compact sets of sequences of polynomials with only real zeroes,
see~\cite{Polya_Schur,Obreschkoff} and references there.}
satisfy this inequality. The Laguerre inequality plays an important role in the study of distribution of
zeroes of real entire functions and in understanding the nature of the Riemann $\xi$-function and trigonometric
integrals, see~\cite{Craven_Csordas_89,Craven_Csordas_02},~\cite{Csordas_Escassut_05}--\cite{Csordas_Vishnyakova_13}
and references there for the generalizations of the Laguerre inequality, as well. The Laguerre inequality is sharp for
entire functions of the Laguerre-P\'olya class in the sense that
for any entire function $f$ in this class which is not a polynomial, the inequality
\begin{equation}\label{Laguere.ineqalities.generalized}
f(x)f''(x)-\varkappa[f'(x)]^2\leqslant0,\qquad x\in\mathbb{R},
\end{equation}
holds for $\varkappa\geqslant1$, and for any $\varkappa<1$ there exists a function in the Laguerre-P\'olya class which is not a polynomial such that~\eqref{Laguere.ineqalities.generalized} is not true for this function.
The function $e^{-x^2}$ is such an example for any $\varkappa<1$. In~\cite{Nicks_2012}, a lower bound was obtained for the number of non-real
zeroes of the function
$$
F_{\varkappa}[f](z)=f(z)f''(z)-\varkappa[f'(z)]^2,
$$
in the case when the entire function $f$ has only finitely many non-real zeroes. The zeroes of the
function~$F_{\varkappa}[f]$ when $f$ is a meromorphic function were studied in~\cite{Bergweiler_1995,Langley_1996,Langley_2011_1,Langley_2011_2,Tohge_1993}.

For polynomials with only real zeroes, inequality~\eqref{Laguere.inequality} is not sharp. In fact, if $p$ is a real
polynomial of degree $n$ with only real zeroes, then the following inequality holds~\cite{Love_1962,Niculescu_2000}:
\begin{equation}\label{Newton.inequality}
p(x)p''(x)-\dfrac{n-1}{n}[p'(x)]^2\leqslant0,\qquad x\in\mathbb{R}.
\end{equation}
This inequality is called the \textit{differential form of the Newton inequality}~\cite{Niculescu_2000}. According to~\cite{Shapiro}, this inequality (together with some other ones)
was found by G.\,P\'olya while he studied unpublished notes~of~J.\,Jensen.

For an arbitrary real polynomial  $p$, the Laguerre inequality~\eqref{Laguere.inequality} does not hold anymore, generally speaking. In~\cite{CCS_1987}, it was conjectured that in this case, the number of real zeroes of the function
\begin{equation}\label{main.function.Q.Hawaii}
Q_{1}[p](z)=\dfrac{p(z)p''(z)-[p'(z)]^2}{p^2(z)},
\end{equation}
does not exceed the number of non-real zeroes of the polynomial $p$. This conjecture was nicknamed the~\textit{Hawaii conjecture} by T.\,Sheil-Small~\cite{Sheil-Small_book}.
It was also noticed in~\cite{Csordas_2006} that the conjecture can be extended to entire functions. The zeroes of $Q_{1}[p](z)$ were also
studied in~\cite{Dilcher,Dilcher_Stol} as well as in~\cite{Borcea.Shapiro_2004,Edwards.Hinkkanen_2011}. It was believed that the Hawaii conjecture (if true) follows from some geometric properties of level curves of logarithmic derivatives, see e.g.~\cite{Borcea.Shapiro_2004}. However, it turned out (see~\cite{Tyaglov_Hawaii})
that the fact claiming by the conjecture is a non-trivial consequence of Rolle's theorem. Indeed, the long and sophisticated proof
is based on laborious calculations of the number of zeroes of $Q_1[p]$ on certain intervals.
Some researches still hope to find another proof, more simple than
the one given in~\cite{Tyaglov_Hawaii}.

Inspired by the Hawaii conjecture and the Newton inequalities, in~\cite{Shapiro} it was conjectured
that the number of real zeroes of the rational function
\begin{equation}\label{Newton.Q.function}
Q_{\tfrac{n-1}{n}}[p](z)=\dfrac{p(z)p''(z)-\frac{n-1}{n}[p'(z)]^2}{p^2(z)}
\end{equation}
does not exceed the number of non-real zeroes of the polynomial $p$ of degree $n$. The present work was initially motivated by this conjecture.
We construct a counter-example (see Section~\ref{section:non-real.zeroes}), and estimate the number of non-real zeroes of the differential
polynomial
\begin{equation}\label{main.polynomial.F}
F_{\varkappa}[p](z)\eqdef p(z)p''(z)-\varkappa[p'(z)]^2
\end{equation}
for $\varkappa\in\mathbb{R}$ and $p$ a real polynomial with \textit{only real} zeroes. Any multiple zero of $p$ is a zero of $F_{\varkappa}[p]$. The zeroes of the polynomial $F_{\varkappa}[p]$ which are zeroes of $p$ are called \textit{trivial} while all other zeroes are called \textit{non-trivial}. So if $p$ has only real zeroes, then all the non-real zeroes
of $F_{\varkappa}[p]$ are non-trivial.
In the present work, we find lower and upper bounds on the number of non-real zeroes of $F_{\varkappa}[p]$ for arbitrary \textit{real} $\varkappa$.
Note that if $\varkappa$ is a non-real number, then $F_{\varkappa}[p]$ \textit{has no} non-trivial zeroes at all by de Gua's rule~\cite{Polya_1930}, since in this case
any zero of $F_{\varkappa}[p]$ must be a zero of $p'$. Thus, the case of non-real~$\varkappa$ is trivial and is out of the scope of the present work.

The paper is organized as follows. In Section~\ref{section:main.results}, we state our main results on the number of non-real zeros of the
differential polynomial $F_{\varkappa}[p]$ in the case when $\varkappa$ is real and the polynomial $p$ has \textit{only real} zeros.
Section~\ref{section:function.Q} is devoted to the calculation of  the total number of non-trivial zeroes of the polynomial
$F_{\varkappa}[p]$ for arbitrary complex polynomial $p$. In Section~\ref{section:real.zeroes}, we prove our main results, inequalities~\eqref{estimates.0}--\eqref{estimates.4} stated in Section~\ref{section:main.results}. In Section~\ref{section:non-real.zeroes},
we consider the differential polynomial $F_{\varkappa}[p]$ for~$p$ to be an arbitrary real polynomial, and disprove
a conjecture of B.\,Shapiro~\cite{Shapiro} by a counterexample. We also provide a  conjecture generalizing
the Hawaii conjecture. In Appendix, we prove a generalization of an auxiliary fact established in~\cite[Lemma~2.5]{Tyaglov_Hawaii}.

Throughout the paper we use the following notation.

\vspace{2mm}

\noindent\textbf{Notation.} If $f$ is a real rational function or a real polynomial, by $Z_{C}(f)$ we denote the number of non-real zeroes of $f$, counting multiplicities, by $Z_{\mathbb{R}}(f)$ the number of real zeroes of $f$, counting multiplicities. In the sequel, we also denote the number of zeroes of $f$ in an interval $(a,b)$ and at a point $\alpha\in\mathbb{R}$
by $Z_{(a,b)}(f)$ and $Z_{\{\alpha\}}(f)$, respectively, thus $Z_{\mathbb{R}}(f)=Z_{(-\infty,+\infty)}(f)$ . Generally, the number of zeroes of $f$ on a set $X$ where $X$ is a subset of $\mathbb{R}$
will be denoted by $Z_{X}(f)$.

\setcounter{equation}{0}

\section{Main results}\label{section:main.results}

Let $p$ be a real polynomial with only real zeroes
\begin{equation}\label{main.polynomial.p.general}
p(z)=a_0\prod_{k=1}^{d}(z-\lambda_k)^{n_k}
\end{equation}
where $d$, $d\geqslant2$, is the number of \textit{distinct} zeroes of $p$, $n_k\in\mathbb{N}$ is the multiplicity of the zero $\lambda_k$ of $p$, $k=1,\ldots,d$, so the degree of $p$ is $n=n_1+...+n_d$, and we set
\begin{equation*}\label{order.zeroes.lambda}
\lambda_1<\lambda_2<\dots<\lambda_d.
\end{equation*}

\begin{theorem}\label{Theorem.main.inequalities.d.4}
Let $d\geqslant4$. Suppose that among the zeroes $\lambda_2,\ldots,\lambda_{d-1}$ there are $d_j$ zeroes of multiplicity~$m_j$, $j=1,\ldots,r$, $r\geqslant2$, so that
\begin{equation*}\label{order.multiplicities}
m_1<m_2<\cdots<m_r,
\end{equation*}
where $m_1=\min\{n_2,\ldots,n_{d-1}\}$ and $m_r=\max\{n_2,\ldots,n_{d-1}\}$, and
\begin{equation*}\label{formulas.miltiplicities}
\sum_{j=1}^{r}d_j=d-2,\qquad \sum_{j=1}^{r}m_j=n-n_1-n_d.
\end{equation*}
%

Then the following inequalities hold:
\begin{itemize}
\item[] if $\ \ \varkappa\leqslant\dfrac{m_1-1}{m_1}$, then
\begin{equation}\label{estimates.0}
Z_{C}(F_{\varkappa}[p])=0;
\end{equation}
\item[] if $\ \ \dfrac{m_j-1}{m_j}<\varkappa\leqslant\dfrac{m_{j+1}-1}{m_{j+1}}$, $j=1,\ldots,r-1$, then
\begin{equation}\label{estimates.1}
0\leqslant Z_{C}(F_{\varkappa}[p])\leqslant2\sum_{i=1}^{j}d_i;
\end{equation}
\item[] if $\ \ \dfrac{m_r-1}{m_r}<\varkappa<\dfrac{n-d+1}{n-d+2}$, then
\begin{equation}\label{estimates.2}
0\leqslant Z_{C}(F_{\varkappa}[p])\leqslant2d-4;
\end{equation}
\item[] if $\ \ \dfrac{n-d+k-1}{n-d+k}\leqslant\varkappa<\dfrac{n-d+k}{n-d+k+1}$, $k=2,\ldots,d-1$, then

\vspace{1mm}

\begin{equation}\label{estimates.3}
2k-2\leqslant Z_{C}(F_{\varkappa}[p])\leqslant2d-4;
\end{equation}
\item[] if $\ \ \varkappa=\dfrac{n-1}{n}$, then
\begin{equation}\label{estimates.3.5}
Z_{C}(F_{\varkappa}[p])=2d-4.
\end{equation}
\item[] if $\ \ \varkappa>\dfrac{n-1}{n}$, then
\begin{equation}\label{estimates.4}
Z_{C}(F_{\varkappa}[p])=2d-2.
\end{equation}
\end{itemize}
\end{theorem}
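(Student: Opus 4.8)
The plan is to pass from $F_\varkappa$ to the rational function $Q_\varkappa\eqdef F_\varkappa/p^2=g'+(1-\varkappa)g^2$, where $g\eqdef p'/p=\sum_{k=1}^d n_k/(z-\lambda_k)$. Since $p$ has only real zeroes, $p^2$ contributes none, so the non-real zeroes of $F_\varkappa$ are exactly those of $Q_\varkappa$, while the real non-trivial zeroes of $F_\varkappa$ are the zeroes of $Q_\varkappa$ in $\mathbb R\setminus\{\lambda_1,\dots,\lambda_d\}$; write $N$ for the number of the latter. I would first invoke the total count from Section~\ref{section:function.Q}: $Z_C(F_\varkappa)+N=2d-2$ for $\varkappa\neq\frac{n-1}{n}$, and $=2d-4$ for $\varkappa=\frac{n-1}{n}$ (two zeroes escaping to infinity, the leading coefficient $a_0^2n^2(\frac{n-1}{n}-\varkappa)$ of $F_\varkappa$ vanishing there). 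Every assertion then reduces to estimating $N$, and since $Q_\varkappa=g^2(c-\varphi)$ with $c\eqdef1-\varkappa$ and $\varphi\eqdef-g'/g^2=(p/p')'$, the real non-trivial zeroes of $F_\varkappa$ are precisely the solutions of $\varphi(x)=c$; i.e. the problem becomes one of counting how the line of height $c$ meets the graph of $\varphi$.

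Three facts drive the estimates. At each critical point $\mu_k$ (the zero of $p'$ with $\lambda_k<\mu_k<\lambda_{k+1}$) one has $g(\mu_k)=0$, hence $Q_\varkappa(\mu_k)=g'(\mu_k)<0$ for every $\varkappa$. Near $\lambda_k$ one has $Q_\varkappa(z)\sim n_k\big((1-\varkappa)n_k-1\big)(z-\lambda_k)^{-2}$, so for $\varkappa\neq\frac{n_k-1}{n_k}$ there is a double pole of sign $\sgn\!\big(\frac{n_k-1}{n_k}-\varkappa\big)$ on both sides, degenerating exactly at $\varkappa=\frac{n_k-1}{n_k}$. Finally Cauchy--Schwarz, $\big(\sum n_k/(z-\lambda_k)\big)^2\le n\sum n_k/(z-\lambda_k)^2$, gives $\varphi\ge1/n$ everywhere, with $\varphi\to1/n$ only as $x\to\pm\infty$ and $\varphi(\lambda_k)=1/n_k$.

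The upper bounds on $Z_C(F_\varkappa)$ (lower bounds on $N$) I would get by counting forced sign changes. By the first two facts, if $\varkappa<\frac{n_j-1}{n_j}$ for an interior $\lambda_j$ ($2\le j\le d-1$) the pole there is $+\infty$ on both sides while $Q_\varkappa$ is negative at the adjacent $\mu$'s, forcing one zero on each side of $\lambda_j$; and on the rays $(-\infty,\lambda_1)$, $(\lambda_d,+\infty)$, where $p/p'$ is convex, resp. concave, by Cauchy--Schwarz so that $\varphi$ is monotone, each of $\lambda_1,\lambda_d$ is seen to carry at least one real zero throughout $\varkappa<\frac{n-1}{n}$ (on the inner side for $\varkappa<\frac{n_1-1}{n_1}$, resp. $\frac{n_d-1}{n_d}$, and on the outer ray beyond). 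Hence $N\ge2+2\,\#\{\,j:2\le j\le d-1,\ \varkappa<\tfrac{n_j-1}{n_j}\,\}$, and expressing $\#\{\cdots\}$ through $m_1<\dots<m_r$ and $\sum d_j=d-2$ yields exactly the right-hand sides of \eqref{estimates.0}--\eqref{estimates.3}. The two extreme regimes follow from the third fact alone: for $\varkappa>\frac{n-1}{n}$ one has $c<1/n<\varphi$, so $N=0$ and $Z_C(F_\varkappa)=2d-2$, i.e. \eqref{estimates.4}; and at $\varkappa=\frac{n-1}{n}$ one still has $c=1/n<\varphi$ on $\mathbb R$, so $N=0$ and, with two zeroes lost at infinity, $Z_C(F_\varkappa)=2d-4$, i.e. \eqref{estimates.3.5}.

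The hard assertions are the lower bounds $Z_C(F_\varkappa)\ge2k-2$ in \eqref{estimates.3}, equivalently $N\le2(d-k)$, which demand an upper bound on the number of solutions of $\varphi(x)=c$. I would cut $\mathbb R$ at $\mu_1,\dots,\mu_{d-1}$ into $d$ intervals, each carrying one $\lambda_k$: on the two unbounded ones $\Gamma\eqdef p/p'-cx$ tends to the same infinity at both ends and so has an odd number of critical points, while on each bounded one $\Gamma$ increases from $-\infty$ to $+\infty$ and has an even number. It then remains to show (a) that each interval contributes at most two solutions — which I would derive from a bound on the number of zeroes per interval of $\varphi'=\Psi/(p')^3$, where $\Psi\eqdef2p(p'')^2-pp'p'''-(p')^2p''$ — and (b) that once $c\le1/(n-d+k)$ at least $k-1$ interior intervals carry no solution, their minimum of $\varphi$ exceeding $c$; this is where the thresholds $\frac{n-d+k-1}{n-d+k}$ arise, and it rests on lower bounds for the interval-minima of $\varphi$ drawn from $\varphi(\lambda_k)=1/n_k\ge1/(n-d+1)$ and the budget $\sum_k(n_k-1)=n-d$. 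I expect the simultaneous control of the oscillation of $\varphi$ and of the position of its minima, (a)--(b), to be the main obstacle, and this is exactly where the Appendix's generalization of \cite[Lemma~2.5]{Tyaglov_Hawaii} is needed. The degenerate values $\varkappa=\frac{m_j-1}{m_j}$ and $\varkappa=\frac{n_k-1}{n_k}$, where a pole of $Q_\varkappa$ simplifies and a non-trivial zero is absorbed into $\lambda_k$, I would handle by the same bookkeeping, checking that the total count and the forced count fall together so that every displayed relation persists at the endpoints.
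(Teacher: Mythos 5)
Your reduction---counting real solutions of $R(z)=\varkappa$ (in your notation $\varphi(x)=c$ with $\varphi=1-R$, $c=1-\varkappa$), the convexity of $\varphi$ between consecutive zeroes of $p'$, the values $\varphi(\lambda_k)=1/n_k$, and the bound $\varphi>1/n$ via Cauchy--Schwarz---is exactly the paper's mechanism (Theorems~\ref{Lemma.concave.R}, \ref{Theorem.R.zeroes}, \ref{Theorem.equations.equivalence}), and your forced-solution counts correctly deliver \eqref{estimates.0}--\eqref{estimates.2}, \eqref{estimates.3.5} and \eqref{estimates.4}, matching Theorems~\ref{Theorem.number.complex.zeroes.n-1.n} and~\ref{Theorem.upper.bound.1}; note also that your step (a) needs no separate analysis of $\varphi'=\Psi/(p')^3$, since ``at most two solutions per bounded interval'' is immediate from the convexity you already have. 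Two repairs in the bookkeeping: your identity $Z_C(F_\varkappa)+N=2d-2$, with $N$ the number of real zeroes of $Q_\varkappa$, is false at $\varkappa=\frac{k-1}{k}$ whenever $p$ has zeroes of multiplicity $k$ (Corollary~\ref{Corol.total.number.of.zeroes.F.general}), and these endpoints are included in \eqref{estimates.1} and \eqref{estimates.3}; likewise your sign-change argument at an interior $\lambda_i$ degenerates exactly at $\varkappa=\frac{n_i-1}{n_i}$. The paper sidesteps both by counting real solutions of $R(z)=\varkappa$ (always $2d-2$ for $\varkappa\neq\frac{n-1}{n}$, some of them sitting at multiple zeroes of $p$ as trivial zeroes of $F_\varkappa$), using concavity together with $R(\lambda_i)=\frac{n_i-1}{n_i}$ at the endpoint values.

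The genuine gap is the lower bound \eqref{estimates.3}, which you yourself flag as the hard part. Your step (b)---for $c\leqslant\frac{1}{n-d+k}$ at least $k-1$ interior intervals have $\min\varphi>c$---is not a lemma on the way to the theorem: given (a) and the count on the unbounded intervals, it \emph{is} the theorem. And the route you propose for it cannot work: the minimum of $\varphi$ on an interior interval is not controlled by $\varphi(\lambda_k)=1/n_k$ or by any per-interval budget, because it is dragged down by clustering of zeroes in and around that interval. For instance, for $p(z)=(z^2-\varepsilon^2)(z-1)$ the interior interval containing the simple zero $\varepsilon$ has $\min\varphi\to\frac12$ as $\varepsilon\to0$, although $\varphi(\varepsilon)=1$; with larger clusters the minimum is pushed near the reciprocal of the cluster's total multiplicity. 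It is precisely this global extremal behaviour that produces the thresholds $\frac{n-d+k-1}{n-d+k}$, and no pointwise estimate of the kind you sketch sees them. The paper's actual mechanism---absent from your proposal, although you name the Appendix result---is the passage to the derivative: $\widehat{Q}_{\varkappa}[p]=Q_{2-\frac1\varkappa}[p']$ and the inequality $Z_{\mathbb{R}}(Q_{\varkappa})\leqslant Z_{\mathbb{R}}(\widehat{Q}_{\varkappa})$ for $\frac12<\varkappa<\frac{n-1}{n}$ (Theorem~\ref{Theorem.ineq.Q.Q.hat.greater.1/2}, assembled from Proposition~\ref{lemma.55} through the interval-by-interval analysis of Lemmata~\ref{Lemma.main.ends}--\ref{Lemma.main.mids}). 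Iterating along $\varkappa_0=\varkappa$, $\varkappa_i=2-\frac1{\varkappa_{i-1}}$, which sends $\frac{j}{j+1}$ to $\frac{j-1}{j}$, one lands after $k-1$ steps at a parameter in $\left[0,\tfrac12\right]$ for $p^{(k-1)}$, of degree $n-k+1$, where the elementary bounds $Z_{\mathbb{R}}(Q)\leqslant2\deg-2$ (respectively $2\deg-4$ at the value $\tfrac12$) hold; an induction on the maximal multiplicity of the zeroes of $p$ then yields \eqref{Theorem.lower.bound.1.estimates.2}, i.e.\ \eqref{estimates.3} (Theorem~\ref{Theorem.lower.bound.1}). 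This differentiation induction with the M\"obius recursion on $\varkappa$ is the missing idea: Proposition~\ref{lemma.55} enters only through it, not through any control of the minima of $\varphi$, and without it \eqref{estimates.3} remains unproved in your proposal.
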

\noindent Theorem~\ref{Theorem.main.inequalities.d.4} follows from  Theorems~\ref{Theorem.number.complex.zeroes.n-1.n},~\ref{Theorem.upper.bound.1},  and~\ref{Theorem.lower.bound.1} established in Section~\ref{section:real.zeroes}.

\vspace{2mm}

Moreover, if $d\geqslant3$, and the zeroes $\lambda_2,\ldots,\lambda_{d-1}$ of $p$ defined in~\eqref{main.polynomial.p.general} are all of multiplicity $m$, then in the results
above we can formally set $m_1=m_r\equiv m$, so for $\varkappa\leqslant\dfrac{m-1}{m}$ the identity~\eqref{estimates.0} holds, while for $\varkappa>\dfrac{m-1}{m}$
we have inequalities~\eqref{estimates.2}--\eqref{estimates.4} (see Theorems~\ref{Theorem.number.complex.zeroes.n-1.n},~\ref{Theorem.upper.bound.2}, and~\ref{Theorem.lower.bound.1}).

\vspace{2mm}

Thus, the following theorem holds.
\begin{theorem}
Let $d\geqslant3$, and the zeroes $\lambda_2,\ldots,\lambda_{d-1}$ of the polynomial $p$ defined in~\eqref{main.polynomial.p.general} are all of multiplicity $m$, for $\varkappa\leqslant\dfrac{m-1}{m}$ one has $Z_{C}(F_{\varkappa}[p])=0$, while for $\varkappa>\dfrac{m-1}{m}$
inequalities~\eqref{estimates.2}--\eqref{estimates.4} hold $($with $m_r=m)$.
\end{theorem}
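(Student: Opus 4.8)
The plan is to read this statement as the degenerate case $r=1$ of the configuration in Theorem~\ref{Theorem.main.inequalities.d.4}. If $\lambda_2,\dots,\lambda_{d-1}$ all have multiplicity $m$, there is a single value among the middle multiplicities, so $r=1$, $m_1=m_r=m$, and $d_1=\sum_{j=1}^{r}d_j=d-2$. Consequently the chain of intermediate ranges~\eqref{estimates.1}, which is indexed by $j=1,\dots,r-1$, is empty, and the list of cases in Theorem~\ref{Theorem.main.inequalities.d.4} collapses to exactly the two regimes $\varkappa\leqslant\frac{m-1}{m}$ and $\varkappa>\frac{m-1}{m}$ stated here. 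Thus the task is to re-run the three building blocks --- Theorems~\ref{Theorem.number.complex.zeroes.n-1.n},~\ref{Theorem.upper.bound.2}, and~\ref{Theorem.lower.bound.1} --- in this configuration and to check that they persist down to $d=3$ (a single middle zero) rather than only $d\geqslant4$.

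The mechanism I would use is the elementary reduction $F_{\varkappa}[p]=p^{2}Q_{\varkappa}[p]$ with $Q_{\varkappa}[p]=L'+(1-\varkappa)L^{2}$, where $L=p'/p=\sum_{k}\frac{n_k}{z-\lambda_k}$; since $p^{2}$ has only real zeroes, $Z_{C}(F_{\varkappa})$ equals the number of non-real zeroes of $Q_{\varkappa}$, and the non-trivial real zeroes of $F_{\varkappa}$ are exactly the zeroes of $Q_{\varkappa}$. Near $\lambda_k$ one has $Q_{\varkappa}\sim n_k\bigl[(1-\varkappa)n_k-1\bigr](z-\lambda_k)^{-2}$, so the local sign of $Q_{\varkappa}$ at $\lambda_k$ equals $\sgn\bigl(\tfrac{n_k-1}{n_k}-\varkappa\bigr)$, while at the unique point $\mu\in(\lambda_k,\lambda_{k+1})$ where $L(\mu)=0$ (it exists because $L$ decreases from $+\infty$ to $-\infty$ across each bounded gap) one has $Q_{\varkappa}(\mu)=L'(\mu)<0$. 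The decisive simplification here is that all $d-2$ thresholds $\frac{n_k-1}{n_k}$, $k=2,\dots,d-1$, coincide at $\frac{m-1}{m}$; hence the sign of $Q_{\varkappa}$ at every middle zero flips \emph{simultaneously} as $\varkappa$ crosses $\frac{m-1}{m}$, which is precisely why no intermediate regime can survive.

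With this in hand I would split into the two regimes. For $\varkappa<\frac{m-1}{m}$ every middle zero has $Q_{\varkappa}\to+\infty$ on both sides, so each of the $d-3$ gaps between consecutive middle zeroes shows the sign pattern $+,-,+$ (using $Q_{\varkappa}(\mu)<0$) and hence carries at least two real zeroes; a short endpoint bookkeeping on the four remaining intervals $(-\infty,\lambda_1)$, $(\lambda_1,\lambda_2)$, $(\lambda_{d-1},\lambda_d)$, $(\lambda_d,+\infty)$ then supplies the remaining real zeroes needed to attain the total count $2d-2$ of non-trivial zeroes, forcing $Z_{C}(F_{\varkappa})=0$; this is what the equal-multiplicity upper bound, Theorem~\ref{Theorem.upper.bound.2}, records. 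For $\varkappa>\frac{m-1}{m}$ the degree bookkeeping of Theorem~\ref{Theorem.number.complex.zeroes.n-1.n} takes over: the leading coefficient of $F_{\varkappa}$ is $a_0^{2}n^{2}\bigl(\tfrac{n-1}{n}-\varkappa\bigr)$, the two top coefficients of $Q_{\varkappa}$ vanish together exactly at $\varkappa=\frac{n-1}{n}$, the degree of $F_{\varkappa}$ drops by two there, and one gets $Z_{C}(F_{\varkappa})=2d-4$ at $\varkappa=\frac{n-1}{n}$ and $Z_{C}(F_{\varkappa})=2d-2$ above it, namely~\eqref{estimates.3.5} and~\eqref{estimates.4}; the intermediate lower bounds~\eqref{estimates.3} come from Theorem~\ref{Theorem.lower.bound.1}, which never uses $r\geqslant2$ and applies verbatim.

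The main obstacle is the sharp upper bound at and just below $\frac{m-1}{m}$, i.e.\ proving $Z_{C}(F_{\varkappa})=0$ exactly rather than merely bounding it. Two points are delicate: showing each middle gap carries \emph{exactly} two real zeroes with no stray extra real zeroes elsewhere that would upset the count, and the boundary value $\varkappa=\frac{m-1}{m}$ itself. At $\varkappa=\frac{m-1}{m}$ the $(z-\lambda_k)^{-2}$ term at every middle zero vanishes and $Q_{\varkappa}$ acquires instead a simple pole with residue $2\sum_{j\neq k}\frac{n_j}{\lambda_k-\lambda_j}$, so each $\lambda_k$ becomes a zero of $F_{\varkappa}$ of order $2m-1$; the count of non-trivial zeroes then drops from $2d-2$ to $d$, and one must show that all $d$ of them are real. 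I would obtain this either by a limiting argument as $\varkappa\uparrow\frac{m-1}{m}$ or by the direct higher-order local expansion carried out in Theorem~\ref{Theorem.upper.bound.2} (the analogous degenerate values $\varkappa=\frac{n_1-1}{n_1}$ or $\frac{n_d-1}{n_d}$, when they fall in the range, are treated the same way). The case $d=3$ is the special instance with no middle-middle gap: there is a single middle zero $\lambda_2$, and one checks the four boundary intervals directly. Everything else is a transcription of the $r\geqslant2$ argument with the single threshold $\frac{m-1}{m}$ replacing the chain $\frac{m_1-1}{m_1}<\dots<\frac{m_r-1}{m_r}$.
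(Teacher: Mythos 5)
Your proposal is correct, and at the top level it coincides with what the paper does: the paper obtains this theorem precisely by reading it as the degenerate case $m_1=m_r\equiv m$ (your $r=1$) of the general configuration and then quoting Theorems~\ref{Theorem.number.complex.zeroes.n-1.n},~\ref{Theorem.upper.bound.2} and~\ref{Theorem.lower.bound.1} (with Theorem~\ref{Theorem.real.zeroes.nonpositive.kappa} covering $\varkappa\leqslant0$). Where you genuinely diverge is in how you would re-prove the $Z_{C}(F_{\varkappa})=0$ regime. The paper never argues with sign patterns of $Q_{\varkappa}$: it counts the solutions of the equation $R(z)=\varkappa$, using that this equation always has exactly $2d-2$ solutions whose non-real ones are exactly the non-real zeroes of $F_{\varkappa}$ (Theorem~\ref{Theorem.equations.equivalence}), that $R$ is concave between its poles (Theorem~\ref{Lemma.concave.R}), and that $R(\lambda_k)=\frac{n_k-1}{n_k}$ by~\eqref{R.at.multiple.zero}; this makes the threshold $\varkappa=\frac{m-1}{m}$ painless, since the identification of non-real solutions with non-real zeroes of $F_{\varkappa}$ survives at threshold values and the solution count does not change there. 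Your route --- intermediate-value bookkeeping on the sign of $Q_{\varkappa}=L'+(1-\varkappa)L^{2}$ near the poles $\lambda_k$ and at the zeroes $\mu_j$ of $L$, played against the total count $2d-2$ --- is more elementary (it needs neither the concavity lemma nor de Gua's inequality, since $Q_{\varkappa}(\mu_j)=L'(\mu_j)<0$ is automatic), and your endpoint case analysis does close the count to $2d-2$ in every sign configuration; but it degenerates exactly at the finitely many values $\varkappa\in\bigl\{\tfrac{n_1-1}{n_1},\tfrac{n_d-1}{n_d},\tfrac{m-1}{m}\bigr\}$ where a double pole of $Q_{\varkappa}$ collapses, forcing the Hurwitz-type limiting patch (legitimate, since $\deg F_{\varkappa}=2n-2$ is locally constant there because $\frac{m-1}{m}<\frac{n-1}{n}$). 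In short: the paper's $R$-equation formalism buys uniformity across thresholds; your formalism buys elementarity at the price of a separate limiting argument.

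Two small inaccuracies, neither fatal. First, at $\varkappa=\frac{m-1}{m}$ the number of non-trivial zeroes is \emph{at most} $d$, not exactly $d$: it drops further whenever $p^{(m+1)}(\lambda_k)=0$ at some middle zero, which de Gua's rule does not forbid (cf.~Corollary~\ref{Corol.total.number.of.zeroes.F.general}); e.g.\ for $p(z)=z^{2}(z^{2}-1)$ and $\varkappa=\tfrac12$ one has $F_{1/2}[p](z)=2z^{4}(2z^{2}-3)$, with only two non-trivial zeroes although $d=3$. This does not affect your argument, whose goal is only that all non-trivial zeroes are real. Second, \eqref{estimates.3.5}--\eqref{estimates.4} are not ``degree bookkeeping'' alone: ruling out \emph{real} non-trivial zeroes for $\varkappa\geqslant\frac{n-1}{n}$ requires the Newton inequality $R(x)<\frac{n-1}{n}$ on $\mathbb{R}$, which is the actual content of Theorem~\ref{Theorem.number.complex.zeroes.n-1.n} you are invoking, so your citation is sound even though your gloss of it is not.
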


Finally, if $d=2$, that is, if the polynomial $p$ has only two distinct zeroes, then
Theorems~\ref{Theorem.number.complex.zeroes.n-1.n},~\ref{Theorem.real.zeroes.nonpositive.kappa},
and~\ref{Theorem.2.distinct.zeroes} imply the following fact.
\begin{theorem}
If the polynomial $p$ has exactly two distinct zeroes, then $Z_{C}(F_{\varkappa}[p])=0$ for $\varkappa\leqslant\dfrac{n-1}{n}$,
and $Z_{C}(F_{\varkappa}[p])=2$ for $\varkappa>\dfrac{n-1}{n}$ .
\end{theorem}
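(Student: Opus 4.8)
The plan is to exploit that $d=2$ forces the non-trivial part of $F_{\varkappa}[p]$ to be a single \emph{quadratic}, so that $Z_{C}(F_{\varkappa})$ can only be $0$ or $2$, and then to decide between the two alternatives by one discriminant computation. Write $p(z)=a_0(z-\lambda_1)^{n_1}(z-\lambda_2)^{n_2}$ with $n_1+n_2=n$, and set $u=z-\lambda_1$, $v=z-\lambda_2$. Starting from $p'/p=n_1/u+n_2/v$ and $p''/p=(p'/p)'+(p'/p)^2$, I would compute
$$\frac{F_{\varkappa}[p]}{p^2}=\frac{p''}{p}-\varkappa\Big(\frac{p'}{p}\Big)^2=(1-\varkappa)\Big(\frac{p'}{p}\Big)^2+\Big(\frac{p'}{p}\Big)'=\frac{A}{u^2}+\frac{C}{uv}+\frac{B}{v^2},$$
where $A=n_1[(1-\varkappa)n_1-1]$, $B=n_2[(1-\varkappa)n_2-1]$, and $C=2(1-\varkappa)n_1n_2$. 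Multiplying through by $u^2v^2$ gives $F_{\varkappa}[p]=a_0^2\,u^{2n_1-2}v^{2n_2-2}\,G(z)$ with $G(z)=Bu^2+Cuv+Av^2$, a real polynomial of degree at most $2$. Since the factors $u,v$ are real, every non-real zero of $F_{\varkappa}[p]$ is a zero of $G$; and as $G$ is real and quadratic its non-real zeros occur in conjugate pairs, so $Z_{C}(F_{\varkappa})=Z_{C}(G)\in\{0,2\}$.

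It then remains to read off the sign of the discriminant of $G$. Two quantities govern everything: the leading coefficient
$$A+B+C=(1-\varkappa)(n_1+n_2)^2-(n_1+n_2)=n\big[(1-\varkappa)n-1\big],$$
and the discriminant, which I would obtain via the factorization $G=(A+B+C)(z-z_1)(z-z_2)$ to be $\operatorname{disc}_z G=(\lambda_1-\lambda_2)^2\,(C^2-4AB)$. The inner factor collapses cleanly, $C^2-4AB=4n_1n_2[(1-\varkappa)n-1]$, so that
$$\operatorname{disc}_z G=4n_1n_2(\lambda_1-\lambda_2)^2\big[(1-\varkappa)n-1\big].$$
Because $n_1,n_2\geqslant1$ and $\lambda_1\neq\lambda_2$, the sign of the discriminant is exactly the sign of $(1-\varkappa)n-1$: it is positive precisely when $\varkappa<\tfrac{n-1}{n}$ and negative precisely when $\varkappa>\tfrac{n-1}{n}$. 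For $\varkappa<\tfrac{n-1}{n}$ the leading coefficient is also nonzero, so $G$ is a genuine quadratic with positive discriminant, hence has two distinct real roots and $Z_{C}=0$; for $\varkappa>\tfrac{n-1}{n}$ the leading coefficient is negative and the discriminant is negative, so $G$ has a conjugate pair of roots and $Z_{C}=2$.

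The one point needing separate care — and the main, if minor, obstacle — is the boundary value $\varkappa=\tfrac{n-1}{n}$, where $A+B+C=0$ and the discriminant vanish simultaneously, so the quadratic degenerates and the factorization argument no longer applies. Here I would substitute $1-\varkappa=1/n$ directly, finding $A=B=-n_1n_2/n$ and $C=2n_1n_2/n$, whence
$$G=-\frac{n_1n_2}{n}(u-v)^2=-\frac{n_1n_2}{n}(\lambda_2-\lambda_1)^2,$$
a nonzero constant. Thus $G$ has no zeros at all at $\varkappa=\tfrac{n-1}{n}$, giving $Z_{C}=0$ and placing this endpoint in the first regime. Assembling the three cases yields $Z_{C}(F_{\varkappa})=0$ for $\varkappa\leqslant\tfrac{n-1}{n}$ and $Z_{C}(F_{\varkappa})=2$ for $\varkappa>\tfrac{n-1}{n}$, as claimed. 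One could instead deduce the boundary and endpoint behaviour from the cited Theorems on the total number of non-trivial zeroes and on nonpositive $\varkappa$, but the direct discriminant route above is entirely self-contained.
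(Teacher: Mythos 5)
Your proof is correct, and it takes a genuinely different route from the paper. The paper deduces this statement from its general machinery: Theorem~\ref{Theorem.number.complex.zeroes.n-1.n} gives $Z_{C}(F_{\varkappa})=2d-2=2$ for $\varkappa>\tfrac{n-1}{n}$ and $2d-4=0$ at $\varkappa=\tfrac{n-1}{n}$ (via the bound $R(x)<\tfrac{n-1}{n}$ coming from the partial-fraction form of $R$); Theorem~\ref{Theorem.real.zeroes.nonpositive.kappa} settles $\varkappa\leqslant0$; and Theorem~\ref{Theorem.2.distinct.zeroes} handles $0<\varkappa<\tfrac{n-1}{n}$ by pure counting --- the equation $R(z)=\varkappa$ has exactly $2d-2=2$ solutions (Theorem~\ref{Theorem.equations.equivalence}), and at least two of them are real by the monotonicity of $R$ on the outer intervals (Lemma~\ref{Lemma.min.numb.sol.R}), so none can be non-real. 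You instead exploit the special structure of $d=2$ to make everything explicit: $F_{\varkappa}[p]=a_0^2(z-\lambda_1)^{2n_1-2}(z-\lambda_2)^{2n_2-2}G(z)$ with $G$ quadratic, and the sign of $\operatorname{disc}_z G=4n_1n_2(\lambda_1-\lambda_2)^2\bigl[(1-\varkappa)n-1\bigr]$ decides the dichotomy, the boundary $\varkappa=\tfrac{n-1}{n}$ degenerating to the nonzero constant $-\tfrac{n_1n_2}{n}(\lambda_2-\lambda_1)^2$. I verified your coefficients $A$, $B$, $C$, the leading coefficient $n\bigl[(1-\varkappa)n-1\bigr]$, the identity $C^2-4AB=4n_1n_2\bigl[(1-\varkappa)n-1\bigr]$, and the boundary collapse; all are correct, and the identification $Z_{C}(F_{\varkappa})=Z_{C}(G)$ is legitimate since the cofactor has only real zeroes. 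What each approach buys: yours is elementary and fully self-contained, and it actually yields more in this case (the exact non-trivial zeroes, their reality or non-reality, and the precise degeneration at $\varkappa=\tfrac{n-1}{n}$, consistent with Theorem~\ref{Theorem.total.num.zeros.F.except} with $l=1$); the paper's route requires no computation specific to $d=2$, and its ingredients (concavity of $R$, de Gua's rule, the equivalence of non-real zeroes of $F_{\varkappa}$ with non-real solutions of $R=\varkappa$) are exactly what carry over to $d\geqslant3$, where no closed-form discriminant argument is available.
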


\begin{remark}
If the polynomial $p$ has a unique zero, then~$F_{\varkappa}[p]$
has no non-trivial zeroes, see~\eqref{Real.poly.with.multiple.root}--\eqref{F.for.unique.multiple.root}, so this case is trivial.
\end{remark}

Note that the non-trivial zeroes of the polynomial $F_{\varkappa}[p]$ are (not vice verse!) the solutions of the~equation
\begin{equation}\label{R.equation}
R(z)=\varkappa,
\end{equation}
where the function $R$ is defined as follows
\begin{equation*}\label{R.function}
R(z)\eqdef\dfrac{p(z)p''(z)}{[p'(z)]^2}.
\end{equation*}

If $p$ has only real zeroes and at least two of them are distinct, then the function $R$ is concave between its poles (Theorem~\ref{Lemma.concave.R}), and
inequalities~\eqref{estimates.0}--\eqref{estimates.4}, in fact,
show that $R$ has no maximum values over $\dfrac{n-2}{n-1}$ (inclusive), and can have at most one maximum value between
$\dfrac{n-3}{n-2}$ (inclusive) and $\dfrac{n-2}{n-1}$ (exclusive), at most two maximum values between
$\dfrac{n-4}{n-3}$ (inclusive) and $\dfrac{n-3}{n-2}$ (exclusive), etc., see~Fig.~\ref{pic.1}.

\begin{figure}[ht]
\centering \includegraphics[scale=0.4]{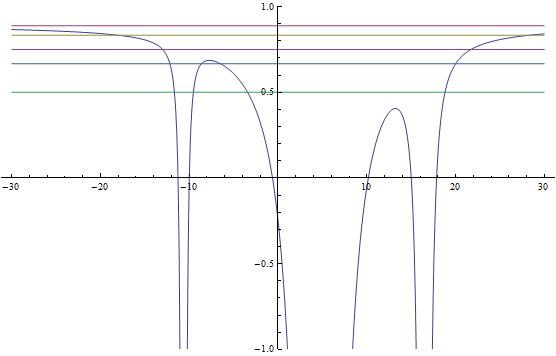} \caption{The function $R$ for the polynomial $p(z)=(z-15)(z+13)^4(x-20)^3(x+10)$.}
\label{pic.1}
\end{figure}

At the same time, not all the solutions of equation~\eqref{R.equation} are non-trivial zeroes of the polynomial~$F_{\varkappa}[p]$. But the non-real zeroes of this polynomial coincide with the non-real solutions of equation~\eqref{R.equation}, see~Theorem~\ref{Theorem.equations.equivalence} and Corollary~\ref{Corollary.equations.equivalence.excep}.
%

\setcounter{equation}{0}
\section{The total number of non-trivial zeroes of the polynomial $F_{\varkappa}[p]$}\label{section:function.Q}

Let $p$ be an arbitrary complex polynomial of degree $n$ 
\begin{equation}\label{Real.poly.main.poly.compl}
p(z)=a_0z^n+a_1z^{n-1}+a_2z^{n-2}+\cdots+a_n,\qquad n\geqslant2.
\end{equation}
Consider the differential polynomial
\begin{equation}\label{F.polynomial}
F_{\varkappa}[p](z)=p(z)p''(z)-\varkappa[p'(z)]^2.
\end{equation}
defined in~\eqref{main.polynomial.F}. As we mentioned in Introduction, all the zeroes of $F_{\varkappa}[p]$
that are not common with the polynomial $p$ are called non-trivial.

\vspace{2mm}

\noindent \textbf{Notation.} \textit{We denote the total number of the non-trivial
zeroes of $F_{\varkappa}[p]$ as $Z_{nt}(F_{\varkappa}[p])$.}

\vspace{2mm}

Suppose first that $\varkappa\neq\dfrac{n-1}{n}$. Then the polynomial $F_{\varkappa}[p]$ has exactly $2n-2$ zeroes, since
\begin{equation}\label{leading.coeff}
F_{\varkappa}[p](z)=a_0^2n^2\left(\dfrac{n-1}{n}-\varkappa\right)z^{2n-2}
+\cdots 
\end{equation}
so the leading coefficient of $F_{\varkappa}[p]$ is non-zero for $\varkappa\neq\dfrac{n-1}{n}$.

If the polynomial $p$ defined in~\eqref{Real.poly.main.poly.compl} has a unique zero $\lambda_1$ of multiplicity $n$,
\begin{equation}\label{Real.poly.with.multiple.root}
p(z)=a_0(z-\lambda_1)^n,
\end{equation}
where
\begin{equation}\label{Real.poly.with.multiple.root.2}
\lambda_1=-\dfrac{a_1}{na_0},
\end{equation}
then we have
\begin{equation}\label{F.for.unique.multiple.root}
F_{\varkappa}[p](z)=a_0^2n^2\left(\dfrac{n-1}{n}-\varkappa\right)(z-\lambda_1)^{2n-2},
\end{equation}
and $F_{\varkappa}[p](z)$ has no non-trivial zeroes.

Suppose that $p$ has \textit{at least two} distinct zeroes, and represent the polynomial $p$ in the following form
\begin{equation*}
p(z)=a_0\prod_{j=1}^{l_1}(z-\nu_j)\prod_{k=1}^{l_2}(z-\zeta_k)^{m_k},
\end{equation*}
where $n_j\geqslant2$, $j=1,\ldots,l_2$, so $p$ has $l_1$ simple zeroes and $l_2$ multiple zeroes.
We denote by $d$ the total number of \textit{distinct} zeroes of the polynomial $p$:
\begin{equation*}
d\eqdef l_1+l_2,
\end{equation*}
so
\begin{equation}\label{total.degree}
n=d+\sum_{k=1}^{l_2}(m_k-1).
\end{equation}
%

%
\begin{theorem}\label{Theorem.total.number.of.zeroes.F.general}
Let $p$ be a complex polynomial of degree $n$, $n\geqslant2$, with exactly $d$ \textbf{distinct} zeroes, $2\leqslant d\leqslant n$. Then
\begin{equation*}
Z_{nt}(F_{\varkappa}[p])=2d-2,
\end{equation*}
%
whenever $\varkappa\neq\dfrac{k-1}{k}$, $k=1,\ldots,n$.
\end{theorem}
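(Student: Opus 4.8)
The plan is to count the \emph{trivial} zeroes of $F_\varkappa[p]$ by computing the exact multiplicity with which each distinct zero of $p$ occurs as a zero of $F_\varkappa[p]$, and then subtract this count from the total number of zeroes of $F_\varkappa[p]$. Since the hypothesis excludes $\varkappa=\tfrac{n-1}{n}$ (the case $k=n$), equation~\eqref{leading.coeff} shows that the leading coefficient of $F_\varkappa[p]$ is nonzero, so $F_\varkappa[p]$ has exactly $2n-2$ zeroes counted with multiplicity. The whole proof thus reduces to a local order-of-vanishing computation.

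The key step is to fix a zero $\alpha$ of $p$ of multiplicity $s$, $1\leqslant s\leqslant n$, and determine the order of $F_\varkappa[p]$ there. Writing $p(z)=(z-\alpha)^s g(z)$ with $g(\alpha)\neq0$ in a neighbourhood of $\alpha$, I would expand
\[
p'(z)=(z-\alpha)^{s-1}\bigl[s\,g(z)+(z-\alpha)g'(z)\bigr],
\]
\[
p''(z)=(z-\alpha)^{s-2}\bigl[s(s-1)g(z)+2s(z-\alpha)g'(z)+(z-\alpha)^2g''(z)\bigr],
\]
(the literal factorizations hold for $s\geqslant2$; the case $s=1$ is checked directly and gives $F_\varkappa[p](\alpha)=-\varkappa\,g(\alpha)^2$). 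Substituting into $F_\varkappa[p]=p\,p''-\varkappa(p')^2$, one sees that both $p\,p''$ and $(p')^2$ are divisible by $(z-\alpha)^{2s-2}$, and collecting the lowest-order term yields
\[
F_\varkappa[p](z)=(z-\alpha)^{2s-2}\Bigl[s^2\Bigl(\tfrac{s-1}{s}-\varkappa\Bigr)g(\alpha)^2+O(z-\alpha)\Bigr].
\]
Because $g(\alpha)\neq0$ and $\varkappa\neq\tfrac{s-1}{s}$ for every possible multiplicity $s$ (this is exactly the exclusion $\varkappa\neq\tfrac{k-1}{k}$ with $k=s$, which for a simple zero $s=1$ reads $\varkappa\neq0$), the bracketed constant does not vanish. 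Hence $\alpha$ is a zero of $F_\varkappa[p]$ of multiplicity \emph{exactly} $2s-2$; in particular a simple zero of $p$ contributes nothing.

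Summing over the $d$ distinct zeroes $\alpha_1,\dots,\alpha_d$ with multiplicities $s_1,\dots,s_d$ (so that $s_1+\cdots+s_d=n$), the total number of trivial zeroes of $F_\varkappa[p]$, counted with multiplicity, is
\[
\sum_{i=1}^{d}(2s_i-2)=2n-2d.
\]
Subtracting this from the total $2n-2$ gives $Z_{nt}(F_\varkappa)=(2n-2)-(2n-2d)=2d-2$, which is the assertion.

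The main obstacle is the local expansion: one must verify that the coefficient of the lowest power $(z-\alpha)^{2s-2}$ is genuinely nonzero, i.e.\ that the cancellation between $p\,p''$ and $\varkappa(p')^2$ produces exactly the factor $\tfrac{s-1}{s}-\varkappa$ and not a spurious higher-order vanishing. The full list of forbidden values $\varkappa\neq\tfrac{k-1}{k}$, $k=1,\dots,n$, is precisely what rules this out for every multiplicity that can occur, while the extreme case $k=n$ simultaneously guarantees the nonvanishing of the leading coefficient that fixes the total degree at $2n-2$.
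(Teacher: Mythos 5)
Your proposal is correct and follows essentially the same route as the paper: the paper likewise fixes the degree of $F_{\varkappa}[p]$ at $2n-2$ via the leading coefficient~\eqref{leading.coeff}, performs the same local expansion at a zero of multiplicity $m$ (written there with Taylor coefficients $A,B,C$ rather than your factorization $p=(z-\alpha)^s g$, an immaterial difference) to show the trivial zero has multiplicity exactly $2m-2$ because the factor $\tfrac{m-1}{m}-\varkappa$ is nonzero, and then sums $\sum(2m_k-2)=2n-2d$ and subtracts. No gap to report.
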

\begin{proof}
If $\lambda$ is a zero of the polynomial $p$ of multiplicity $m$, then
\begin{equation}\label{Real.poly.multiple.zero}
p(z)=A(z-\lambda)^{m}+B(z-\lambda)^{m+1}+C(z-\lambda)^{m+2}+O\left((z-\lambda)^{m+3}\right), \quad A\neq0,\quad\text{as}\quad z\to\lambda,
\end{equation}
therefore,
\begin{equation}\label{Real.poly.multiple.zero.2.F}
\small
\begin{array}{l}
F_{\varkappa}[p](z)=\left(\dfrac{m-1}{m}-\varkappa\right)\cdot m^2\cdot A^2\cdot
(z-\lambda)^{2m-2}+2\left(\dfrac{m}{m+1}-\varkappa\right)\cdot m(m+1)\cdot AB\cdot(z-\lambda)^{2m-1}+\\
\\
+\left[\left(\dfrac{m}{m+1}-\varkappa\right)\cdot (m+1)^2\cdot B^2+2\left(\dfrac{m^2+m+1}{m(m+2)}-\varkappa\right)\cdot m(m+2)\cdot AC\right]\cdot(z-\lambda)^{2m}+O\left((z-\lambda)^{2m+1}\right),
\end{array}
\end{equation}
as $z\to\lambda$. Thus, if $\varkappa\neq\dfrac{k-1}{k}$, $k=1,\ldots,n$, then a zero $\lambda$ of $p$ of multiplicity $m>1$ is a trivial zero
of $F_{\varkappa}[p]$ of multiplicity $2m-2$, and is not a zero of $F_{\varkappa}[p]$ if $m=1$.

Now from~\eqref{total.degree}, we obtain that the total number of all the trivial zeroes of $F_{\varkappa}[p]$ is equal to
\begin{equation}\label{total.number.trivial.zeroes.F.kappa}
\sum\limits_{k=1}^{l_2}(2m_k-2)=2\sum\limits_{k=1}^{l_2}(m_k-1)=2n-2d.
\end{equation}
Therefore, the total number of all non-trivial zeroes of $F_{\varkappa}[p]$ equals $2d-2$,
since $\deg F_{\varkappa}[p]=2n-2$, as we established above.
\end{proof}

When  $\varkappa=\dfrac{k-1}{k}$, $k=1,\ldots,n$,  it is not an easy task to define the number of non-trivial zeroes of $F_{\varkappa}[p]$ for arbitrary complex polynomial $p$. However, one can estimate this number  if $p$ is a real polynomial with only real zeroes.

\begin{corol}\label{Corol.total.number.of.zeroes.F.general}
Let $p$ be a real polynomial of degree $n$, $n\geqslant2$, with only real zeroes If  $p$ has exactly $d$ distinct zeroes, $2\leqslant d\leqslant n$, and if
$\varkappa=\dfrac{k-1}{k}$ for some $k=2,\ldots,n-1$, then
\begin{equation}\label{non.trivial.zeros.F.except}
2d-2-2\alpha_k\leqslant Z_{nt}(F_{\varkappa}[p])\leqslant2d-2-\alpha_k,
\end{equation}
%
where $\alpha_k$ is the number of zeroes of the polynomial  $p$ of multiplicity $k$.

Moreover, if $k=1$, i.e. $\varkappa=0$, then
\begin{equation}\label{non.trivial.zeros.F.except.0}
2d-2\alpha_1\leqslant Z_{nt}(F_{\varkappa}[p])\leqslant2d-2-\alpha_1,
\end{equation}
where $\alpha_1$ is the number of simple zeros of the polynomial $p$.
\end{corol}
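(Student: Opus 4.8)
The plan is to reduce everything to the local behaviour of $F_{\varkappa}$ at the individual zeroes of $p$, exactly as in the proof of Theorem~\ref{Theorem.total.number.of.zeroes.F.general}, and then to weigh the local multiplicities against the total degree. First I would record that, since $1\leqslant k\leqslant n-1$, we have $\varkappa=\frac{k-1}{k}\neq\frac{n-1}{n}$, so by~\eqref{leading.coeff} the leading coefficient of $F_{\varkappa}$ is nonzero and $F_{\varkappa}$ has exactly $2n-2$ zeroes counted with multiplicity. Hence $Z_{nt}(F_{\varkappa})=2n-2-\mathcal{T}$, where $\mathcal{T}$ is the total multiplicity of the trivial zeroes, and the whole problem becomes a matter of squeezing $\mathcal{T}$ between two bounds.

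Next I would reuse the expansion~\eqref{Real.poly.multiple.zero.2.F}. For a zero $\lambda$ of multiplicity $m$ the coefficient of $(z-\lambda)^{2m-2}$ equals $\left(\frac{m-1}{m}-\varkappa\right)m^2A^2$, which is nonzero precisely when $m\neq k$. Thus every zero of $p$ of multiplicity $m\neq k$ is still a trivial zero of $F_{\varkappa}$ of multiplicity exactly $2m-2$, and these account for $2n-2d-\alpha_k(2k-2)$ of the trivial multiplicity. Writing $T=\sum_{\operatorname{ord}_\lambda p=k}\operatorname{ord}_\lambda F_{\varkappa}$ for the contribution of the multiplicity-$k$ zeroes, I then get $\mathcal{T}=2n-2d-\alpha_k(2k-2)+T$ and therefore $Z_{nt}(F_{\varkappa})=2d-2+\alpha_k(2k-2)-T$, so the two inequalities in~\eqref{non.trivial.zeros.F.except} are exactly equivalent to the sandwich $\alpha_k(2k-1)\leqslant T\leqslant 2k\,\alpha_k$.

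The heart of the argument is the local multiplicity $\operatorname{ord}_\lambda F_{\varkappa}$ at a zero $\lambda$ of multiplicity exactly $k$. Here the $(z-\lambda)^{2k-2}$ coefficient vanishes, and~\eqref{Real.poly.multiple.zero.2.F} shows the next coefficient is $2AB$, while once $B=0$ the $(z-\lambda)^{2k}$ coefficient collapses to $6AC$ (the $B^2$ term drops out). So as long as $B$ and $C$ do not both vanish one gets $\operatorname{ord}_\lambda F_{\varkappa}\in\{2k-1,2k\}$, which yields $\alpha_k(2k-1)\leqslant T\leqslant 2k\,\alpha_k$ and hence~\eqref{non.trivial.zeros.F.except}. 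The lower bound $\operatorname{ord}_\lambda F_{\varkappa}\geqslant 2k-1$, and with it the estimate $Z_{nt}\leqslant 2d-2-\alpha_k$, is automatic from the vanishing of the leading coefficient. The genuinely delicate half is the upper bound $\operatorname{ord}_\lambda F_{\varkappa}\leqslant 2k$, i.e. excluding the degenerate configuration $B=C=0$ in which several non-trivial zeroes coalesce at $\lambda$. I expect this to be the main obstacle: it cannot be controlled one zero at a time (a single multiplicity-$k$ zero can absorb more than two zeroes), so I would instead argue globally, tracking the $2d-2$ non-trivial zeroes of $F_{\varkappa}$ for $\varkappa$ near $\frac{k-1}{k}$ and using continuity of the roots together with the fixed total degree $2n-2$ to bound, in the aggregate, how many of them can flow into the multiplicity-$k$ zeroes in the limit.

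Finally, for the case $k=1$ I would specialize to $F_{0}=p\,p''$: a simple zero $\lambda$ of $p$ is a trivial zero of $F_{0}$ of multiplicity $1+\operatorname{ord}_\lambda p''$, so $T=\alpha_1+\sum_{\operatorname{ord}_\lambda p=1}\operatorname{ord}_\lambda p''$. The trivial estimate $\operatorname{ord}_\lambda F_{0}\geqslant 1$ gives the upper bound $Z_{nt}\leqslant 2d-2-\alpha_1$ immediately, whereas the sharper lower bound in~\eqref{non.trivial.zeros.F.except.0} reduces to bounding $\sum_{\operatorname{ord}_\lambda p=1}\operatorname{ord}_\lambda p''$ from above. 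This is once more the same difficulty, now phrased as limiting how strongly $p''$ may vanish at a simple zero of $p$, and I would address it by the same global/continuity argument rather than zero-by-zero.
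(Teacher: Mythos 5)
Your reduction is correct and coincides with the paper's: for $\varkappa=\tfrac{k-1}{k}\neq\tfrac{n-1}{n}$ the polynomial $F_{\varkappa}$ has degree $2n-2$, every zero of $p$ of multiplicity $m\neq k$ is a trivial zero of $F_{\varkappa}$ of multiplicity exactly $2m-2$, and \eqref{non.trivial.zeros.F.except} is equivalent to the sandwich $(2k-1)\alpha_k\leqslant T\leqslant 2k\alpha_k$, where $T$ is the total order of $F_{\varkappa}$ at the multiplicity-$k$ zeroes of $p$. The half $T\geqslant(2k-1)\alpha_k$, and with it the upper bounds in \eqref{non.trivial.zeros.F.except} and \eqref{non.trivial.zeros.F.except.0}, you obtain exactly as the paper does, from \eqref{Real.poly.multiple.zero.2.F}.

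The lower bounds are where your proposal has a genuine gap, and the route you sketch cannot close it. The paper proves $\operatorname{ord}_{\lambda}F_{\varkappa}\leqslant2k$ at each multiplicity-$k$ zero \emph{pointwise}, by de Gua's rule: for a real polynomial with only real zeroes, a point $\lambda$ with $p^{(k)}(\lambda)\neq0$ cannot be a multiple zero of $p^{(k+1)}$, so $B$ and $C$ in \eqref{Real.poly.multiple.zero} cannot both vanish. That is the single idea your proposal is missing, and your substitute --- tracking the $2d-2$ non-trivial zeroes as $\varkappa\to\tfrac{k-1}{k}$ and bounding ``in the aggregate'' how many flow into the multiplicity-$k$ zeroes by continuity and the fixed degree --- proves nothing: continuity of roots puts no upper bound on how many of them coalesce at a point in the limit, because the limit polynomial $F_{\tfrac{k-1}{k}}[p]$ is a fixed polynomial and its local orders are whatever they are. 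Worse, in the generality in which you are running the argument (complex $p$, where de Gua's rule is not available), the aggregate inequality $T\leqslant2k\alpha_k$ is simply false: for $p(z)=z(z^3-1)^2$ (so $n=7$, $d=4$) with $k=1$, $\varkappa=0$, $\alpha_1=1$, one has $F_0=pp''=6z^3(z^3-1)^2(7z^3-4)$, hence $T=\operatorname{ord}_0F_0=3>2=2k\alpha_k$ and $Z_{nt}(F_0)=3<4=2d-2-2\alpha_1$; already $p(z)=z^4-z$ exhibits a simple zero of $p$ that is a zero of $F_0$ of order $3$. So no global continuity argument can exist; what is needed is precisely the pointwise de Gua exclusion you dismissed, in the real-rooted setting where it holds. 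The same objection applies to your $k=1$ case: bounding $\sum_{\lambda}\operatorname{ord}_{\lambda}p''$ over the simple zeroes $\lambda$ of $p$ by $\alpha_1-2$ is again a statement about the fixed pair $(p,p'')$ --- the paper asserts it as a property of the common zeroes of $p$ and $p''$ --- and deferring it to the same limiting argument leaves \eqref{non.trivial.zeros.F.except.0} unproven.
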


To prove this corollary we need to recall De Gua's rule.

\vspace{2mm}

\noindent\textbf{De Gua's rule (\cite{Polya_1930}, see also~\cite{Ki_Kim}).} \textit{If a real polynomial has only real zeroes, then its derivatives
have no multiple zeroes but the zeroes of the polynomial itself. Additionally, if a number $\xi$
is a (simple) real zero of $l^{th}$ derivative of the polynomial $p$, $l\geqslant1$, then
\begin{equation}\label{de.Gua.ineq.gen}
p^{(l-1)}(\xi)p^{(l+1)}(\xi)<0.
\end{equation}
}

\vspace{2mm}

\begin{proof}[Proof of Corollary~\ref{Corol.total.number.of.zeroes.F.general}]
From formul\ae~\eqref{Real.poly.multiple.zero}--\eqref{Real.poly.multiple.zero.2.F} it follows that if $\lambda$ is a zero of $p$
of multiplicity~$m$, then it is a zero of $F_{\varkappa}[p]$ of multiplicity at least $2m-2$. However, if $\varkappa=\dfrac{m-1}{m}$,
then $\lambda$ is a zero of $F_{\varkappa}[p]$ of multiplicity at least $2m-1$. If $B\neq0$ in~\eqref{Real.poly.multiple.zero}, then
its multiplicity is exactly $2m-1$, since in this case the coefficient at $(z-\lambda)^{2m-1}$ in~\eqref{Real.poly.multiple.zero.2.F} is non-zero. However, if $B=0$, which is equivalent
to the identity $p^{(m+1)}(\lambda)=0$, then $\lambda$ is a zero of $F_{\varkappa}[p]$ of multiplicity at least $2m$. In fact, its multiplicity
is exactly $2m$ in this case, since the coefficient at $(z-\lambda)^{2m}$ can be zero only if additionally $C=0$. But if it is so, then
we have
$$
p^{(m)}(\lambda)=m!A\neq0, \quad p^{(m+1)}(\lambda)=(m+1)!B=0, \quad p^{(m+2)}(\lambda)=(m+2)!C=0,
$$
which contradicts de Gua's rule (see also~\eqref{de.Gua.ineq}).

So, if $\varkappa=\dfrac{k-1}{k}$ for some $k=1,\ldots,n-1$, then a zero of the polynomial $p$ of multiplicity $k$ is a trivial zero of $F_{\varkappa}[p]$
of multiplicity $2k-1$ or $2k$. Any other zero of $p$ of multiplicity $m\neq k$ is a trivial zero $F_{\varkappa}[p]$ of multiplicity $2m-2$. Summing
all the trivial zeroes of $F_{\varkappa}[p]$ with their multiplicities and recalling that $\deg F_{\varkappa}[p]=2n-2$, we obtain the
inequalities~\eqref{non.trivial.zeros.F.except}.

If $\varkappa=0$, then we can improve the lower bound in~\eqref{non.trivial.zeros.F.except}, since the $p''$ has at most $\alpha_1 -2$ zeroes common with $p$.
\end{proof}

%

\begin{remark}
Note that the upper and lower bounds in Corollary~\ref{Corol.total.number.of.zeroes.F.general} are sharp.
Indeed, for
$$
p(z)=(z-1)^3(z-2)^2(z-3)^3,
$$
the polynomial $F_{\varkappa}[p]$ has exactly $2$ non-trivial zeroes, while for
$$
p(z)=(z-1)^3(z-2)^3(z-3)^3,
$$
the polynomial $F_{\varkappa}[p]$ has no non-trivial zeroes.
\end{remark}
\begin{remark}
The upper bound in Corollary~\ref{Corol.total.number.of.zeroes.F.general} is true for any complex polynomial. It can be proved in the
same way as in Corollary~\ref{Corol.total.number.of.zeroes.F.general}. The lower bound is much more difficult to establish, since de
Gua's rule is not applicable for arbitrary complex polynomial. However, in the present work we do need an analogue of Corollary~\ref{Corol.total.number.of.zeroes.F.general}
for arbitrary complex polynomial.
\end{remark}

\vspace{2mm}

Consider now the exceptional case $\varkappa=\dfrac{n-1}{n}$.
To calculate the total number of the non-trivial zeroes of the polynomial $F_{\tfrac{n-1}{n}}[p]$, we should define
its degree first.

If $p$ defined in~\eqref{Real.poly.main.poly.compl} has a unique multiple zero, then by~\eqref{F.for.unique.multiple.root}
we have $F_{\tfrac{n-1}{n}}[p](z)\equiv0$. Thus, to study the zeroes of the polynomial $F_{\varkappa}[p]$ we exclude such a situation in the sequel.

\begin{theorem}\label{Theorem.total.num.zeros.F.except}
Let the polynomial $p$ defined in~\eqref{Real.poly.main.poly.compl} have at least two distinct zeroes. Then the polynomial~$F_{\tfrac{n-1}{n}}[p]$
has exactly $2n-3-l$ zeroes for some $l$, $1\leqslant l\leqslant n-1$, if and only if $p$ has the form
\begin{equation}\label{Real.poly.extr.case}
p(z)=a_0\left(z+\dfrac{a_1}{a_0n}\right)^n+q(z),
\end{equation}
where $q$ is a polynomial of degree $n-l-1$.
\end{theorem}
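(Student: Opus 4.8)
The plan is to reduce to the \emph{depressed} (centered) form of $p$ and then read off the degree of $F_{\tfrac{n-1}{n}}[p]$ directly from a decomposition in which the top-degree cancellation is exhibited explicitly. Since the number of zeroes counted with multiplicity equals the degree, the whole statement amounts to the degree formula $\deg F_{\tfrac{n-1}{n}}[p]=n-2+\deg q$ together with the parametrization $\deg q=n-l-1\Leftrightarrow\deg F=2n-3-l$.

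First I would exploit the translation covariance of the operator, namely $F_{\varkappa}[p(\cdot-c)](z)=F_{\varkappa}[p](z-c)$, which leaves $\deg F_{\varkappa}[p]$ unchanged. Choosing $c=-\tfrac{a_1}{a_0 n}$ replaces $p$ by its depressed form, so I may assume $a_1=0$ and write $p(z)=a_0z^n+q(z)$ with $\deg q\leqslant n-2$; hypothesis~\eqref{Real.poly.extr.case} then says precisely that $\deg q=n-l-1$, while the excluded unique-zero case is exactly $q\equiv0$ (so that $F\not\equiv0$ under the standing assumption of at least two distinct zeroes).

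Next I would substitute $p=a_0z^n+q$ into $F_{\tfrac{n-1}{n}}[p]=pp''-\tfrac{n-1}{n}(p')^2$ and collect terms. The pure $a_0z^n$ contribution to the coefficient of $z^{2n-2}$ cancels, as already recorded in~\eqref{leading.coeff} at $\varkappa=\tfrac{n-1}{n}$, and the surviving terms organize as
$$F_{\tfrac{n-1}{n}}[p]=a_0\big[z^n q''-2(n-1)z^{n-1}q'+n(n-1)z^{n-2}q\big]+\Big[q q''-\tfrac{n-1}{n}(q')^2\Big].$$
The key structural observation is that each of the three terms inside the first bracket has degree exactly $n-2+\deg q$, whereas the trailing bracket has degree at most $2\deg q-2$, which is \emph{strictly smaller} since $\deg q\leqslant n-2<n$. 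Hence the degree of $F$ is governed entirely by the $a_0$-bracket.

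The decisive step is to verify that the coefficient of $z^{n-2+\deg q}$ does not vanish. Writing $j=\deg q$ and letting $c_j\neq0$ be the leading coefficient of $q$, this coefficient equals $a_0 c_j\big[j(j-1)-2(n-1)j+n(n-1)\big]$. The bracket is a quadratic in $j$ with discriminant $1$, so it factors as $(j-n)(j-n+1)$; since $0\leqslant j\leqslant n-2$, both factors are nonzero, and the leading coefficient is nonzero. This yields $\deg F_{\tfrac{n-1}{n}}[p]=n-2+\deg q$, and substituting $\deg q=n-l-1$ gives exactly $2n-3-l$; conversely, prescribing the number of zeroes to be $2n-3-l$ forces $\deg q=n-l-1$, so both implications follow at once (with $1\leqslant l\leqslant n-1$ corresponding to $0\leqslant\deg q\leqslant n-2$). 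The one point demanding care is that the three top-degree terms might conspire to cancel; the clean factorization $(j-n)(j-n+1)$ is precisely what rules this out and is the heart of the argument.
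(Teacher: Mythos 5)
Your proposal is correct, and the two arguments share their computational core but differ in how the equivalence is organized. Your decomposition
\begin{equation*}
F_{\tfrac{n-1}{n}}[p](z)=a_0\left[z^n q''(z)-2(n-1)z^{n-1}q'(z)+n(n-1)z^{n-2}q(z)\right]+\left[q(z)q''(z)-\tfrac{n-1}{n}\left[q'(z)\right]^2\right]
\end{equation*}
with leading coefficient $a_0c_j(j-n)(j-n+1)$ is exactly the computation the paper performs (untranslated) in its ``if'' direction, where the asserted leading term $a_0bl(l+1)z^{2n-l-3}$ is your coefficient with $j=n-l-1$, so $(j-n)(j-n+1)=l(l+1)$; your factorization via the discriminant makes explicit the non-cancellation that the paper leaves implicit. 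Where you genuinely diverge is the ``only if'' direction: the paper proves it by an iterative coefficient argument, successively forcing the identities~\eqref{Real.poly.extr.case.coeff.poly.general} $a_j=\binom{n}{j}a_0^{1-j}(a_1/n)^j$ as the leading coefficients of $F_{\tfrac{n-1}{n}}[p]$ vanish one by one (a ``continuing in such a way'' induction), whereas you observe that the representation~\eqref{Real.poly.extr.case} is a \emph{canonical normal form} available for every $p$ (the subtraction $q=p-a_0(\,\cdot+\tfrac{a_1}{a_0n})^n$ always has degree at most $n-2$, with $q\equiv0$ precisely in the excluded unique-zero case), so the single degree formula $\deg F_{\tfrac{n-1}{n}}[p]=n-2+\deg q$ delivers both implications at once. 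Your route is logically tighter and avoids the induction sketch; the paper's route has the side benefit of producing the explicit coefficient identities~\eqref{Real.poly.extr.case.coeff.poly.general}, which it uses afterwards to discuss the case $l=n-1$ and the degenerate identity~\eqref{Real.poly.extr.case.coeff.poly.last}.
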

\begin{proof}
Indeed, let the polynomial $p$ be of the form~\eqref{Real.poly.extr.case}. Denote by $b\neq0$ the leading coefficient of the polynomial $q$, and
\begin{equation*}
\lambda:=-\dfrac{a_1}{a_0n}.
\end{equation*}
Then we have
\begin{equation*}
\begin{array}{l}
\displaystyle F_{\tfrac{n-1}n}[p](z)=\left[a_0(z-\lambda)^n+q(z)\right]\cdot
\left[a_0n(n-1)(z-\lambda)^{n-2}+q''(z)\right]-\dfrac{n-1}{n}
\left[a_0n(z-\lambda)^{n-1}+q'(z)\right]^2=\\
\\
=a_0bl(l+1)z^{2n-l-3}+O(z^{2n-l-4})\quad\text{as}\quad |z|\to\infty,\\
\end{array}
\end{equation*}
as required.

\vspace{2mm}

Conversely,  for the polynomial $p$ defined in~\eqref{Real.poly.main.poly.compl} we have
\begin{equation*}
F_{\tfrac{n-1}{n}}[p](z)=2\left[a_0a_2-\binom{n}{2}\left(\dfrac{a_1}{n}\right)^2\right]z^{2n-4}+\cdots,
\end{equation*}
that is, the polynomial $F_{\tfrac{n-1}{n}}[p]$ has at most $2n-4$ zeroes
(unless it is not identically zero). In particular, for $n=2$, $F_{\tfrac{n-1}{n}}[p]$ is constant.

The leading coefficient of the polynomial $F_{\tfrac{n-1}{n}}[p]$ vanishes if and only if
\begin{equation*}\label{Real.poly.extr.case.coeff.poly.2}
a_2=\dfrac{\binom{n}{2}}{a_0}\left(\dfrac{a_1}{n}\right)^2.
\end{equation*}
In this case, we have
\begin{equation*}\label{Real.poly.extr.case.numerator.main.func.3}
F_{\tfrac{n-1}{n}}[p](z)=6\left[a_0a_3-\dfrac1{a_0}\cdot
\binom{n}{3}\left(\dfrac{a_1}{n}\right)^3\right]z^{2n-5}+\cdots.
\end{equation*}
The coefficient at the power $2n-5$ can also be equal to zero.

Continuing in such a way, we obtain that the polynomial $F_{\tfrac{n-1}{n}}[p]$ has the form
\begin{equation}\label{Real.poly.extr.case.numerator.main.func.general}
F_{\tfrac{n-1}{n}}[p](z)=l(l+1)\left[a_0a_{l+1}-\dfrac1{a_0^{l-1}}\cdot
\binom{n}{l+1}\left(\dfrac{a_1}{n}\right)^{l+1}\right]z^{2n-3-l}+\cdots,
\end{equation}
with $1\leqslant l\leqslant n-1$ if and only if the first $l$
coefficients of the polynomial $p$ satisfy the following
identities\footnote{The case $j=1$ is trivial but we include
it to formula~\eqref{Real.poly.extr.case.coeff.poly.general} for generality.}
\begin{equation}\label{Real.poly.extr.case.coeff.poly.general}
a_j=\dfrac{\binom{n}{j}}{a_0^{j-1}}\left(\dfrac{a_1}{n}\right)^j,\quad
j=1,2,\ldots,l.
\end{equation}

It easy to see now that the polynomial $p$ whose coefficients satisfy
the identities~\eqref{Real.poly.extr.case.coeff.poly.general} must have the form~\eqref{Real.poly.extr.case}.
\end{proof}

Note that if $l=n-1$, then by~\eqref{Real.poly.extr.case.numerator.main.func.general}
the polynomial $F_{\tfrac{n-1}{n}}[p]$ has at most $n-2$ zeroes. Moreover, it has less
than $n-2$ zeroes if and only if the coefficient $a_n$ satisfies
the identity
\begin{equation}\label{Real.poly.extr.case.coeff.poly.last}
a_n=\dfrac{\binom{n}{n}}{a_0^{n-1}}\left(\dfrac{a_1}{n}\right)^n.
\end{equation}

However, if the coefficients of the polynomial $p$ satisfy the
identities~\eqref{Real.poly.extr.case.coeff.poly.general} for $l=n-1$
and the identity~\eqref{Real.poly.extr.case.coeff.poly.last}, then the
polynomial~$p$ has the form~\eqref{Real.poly.with.multiple.root}--\eqref{Real.poly.with.multiple.root.2},
that is, it has a unique zero of multiplicity~$n$, so $F_{\tfrac{n-1}{n}}[p](z)\equiv0$ in this case as we mentioned above.

Now we are in a position to find the number of non-trivial zeroes of the polynomial $F_{\tfrac{n-1}{n}}[p]$.

\begin{theorem}\label{Theorem.total.number.of.zeroes.Q.exceptional}
Let $p$ be a complex polynomial of degree $n$, $n\geqslant2$, and let $d$ be the number of \textbf{distinct} zeroes of $p$, $2\leqslant d\leqslant n$. Suppose that the polynomial $p$ has the form~\eqref{Real.poly.extr.case}. Then
\begin{equation}\label{number.of.zeroes.Q.exceptional}
Z_{nt}\left(F_{\tfrac{n-1}{n}}[p]\right)=2d-3-l.
\end{equation}

If the polynomial $p$ has a unique multiple zero, then $F_{\tfrac{n-1}{n}}[p](z)\equiv0$.
\end{theorem}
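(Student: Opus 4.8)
The plan is to obtain $Z_{nt}(F_{\tfrac{n-1}{n}})$ as the difference between the total number of zeroes of $F_{\tfrac{n-1}{n}}[p](z)$, counted with multiplicities, and the number of its trivial zeroes. Since $p$ is assumed to have the form~\eqref{Real.poly.extr.case} with $\deg q=n-l-1$, Theorem~\ref{Theorem.total.num.zeros.F.except} gives at once that $\deg F_{\tfrac{n-1}{n}}=2n-3-l$, so $F_{\tfrac{n-1}{n}}[p](z)$ has exactly $2n-3-l$ zeroes in $\mathbb{C}$, counting multiplicities. It then remains only to count the trivial zeroes and subtract.

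For the trivial count I would first observe that the hypothesis $d\geqslant2$ rules out any match with the critical value of $\varkappa$. Indeed, if $p$ has at least two distinct zeroes, then every zero of $p$ has multiplicity $m\leqslant n-1<n$, so that $\tfrac{m-1}{m}\neq\tfrac{n-1}{n}=\varkappa$ for each such $m$. Consequently the coefficient $\left(\tfrac{m-1}{m}-\varkappa\right)m^2A^2$ in the local expansion~\eqref{Real.poly.multiple.zero.2.F} of $F_{\varkappa}[p]$ at a zero $\lambda$ of multiplicity $m$ is nonzero, whence $\lambda$ is a zero of $F_{\tfrac{n-1}{n}}[p](z)$ of multiplicity \emph{exactly} $2m-2$ (and is not a zero when $m=1$). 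Summing over the $d$ distinct zeroes and using $\sum_i m_i=n$, exactly as in~\eqref{total.number.trivial.zeroes.F.kappa}, the total number of trivial zeroes equals $\sum_i(2m_i-2)=2n-2d$.

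Combining the two counts yields $Z_{nt}(F_{\tfrac{n-1}{n}})=(2n-3-l)-(2n-2d)=2d-3-l$. The inequality $2d-3-l\geqslant0$ is then automatic: the trivial zeroes form a sub-multiset of the full zero set of $F_{\tfrac{n-1}{n}}[p](z)$, so their number $2n-2d$ cannot exceed the degree $2n-3-l$, which is precisely $l\leqslant2d-3$. Finally, if $p$ has a unique (necessarily multiple) zero, then $p$ has the form~\eqref{Real.poly.with.multiple.root}, and in~\eqref{F.for.unique.multiple.root} the choice $\varkappa=\tfrac{n-1}{n}$ makes the factor $\tfrac{n-1}{n}-\varkappa$ vanish, so $F_{\tfrac{n-1}{n}}[p](z)\equiv0$.

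The only genuinely delicate point, and the one I would watch most carefully, is the exactness of the multiplicity $2m-2$ at each trivial zero: this is exactly what fails when $\varkappa$ hits the value $\tfrac{m-1}{m}$, raising the order of vanishing to $2m-1$ or $2m$. It is the elementary observation that $d\geqslant2$ forces $m<n$, hence $\tfrac{m-1}{m}\neq\tfrac{n-1}{n}$, that keeps every trivial zero in the generic regime and lets the bookkeeping close cleanly against the degree supplied by Theorem~\ref{Theorem.total.num.zeros.F.except}.
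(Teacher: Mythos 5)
Your proof is correct and follows essentially the same route as the paper: take the degree $2n-3-l$ from Theorem~\ref{Theorem.total.num.zeros.F.except}, count the trivial zeroes exactly as in~\eqref{total.number.trivial.zeroes.F.kappa}, and subtract. Your two refinements --- deducing $m\leqslant n-1<n$ directly from $d\geqslant2$ (where the paper instead invokes the bound $m\leqslant n-l$), and justifying $2d-3-l\geqslant0$ by comparing the trivial-zero count against the degree --- are minor sharpenings of the same bookkeeping, not a different argument.
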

\begin{proof}
By Theorem~\ref{Theorem.total.num.zeros.F.except}, the degree of the polynomial $F_{\tfrac{n-1}{n}}[p]$ equals $2n-3-l$, $1\leqslant l\leqslant n-1$, if and only if
the polynomial $p$ has the form~\eqref{Real.poly.extr.case}. In this case, the multiplicity of zeroes of $p$ is bounded by $n-l$. The number $2d-3-l$ can be obtained in the same way
as in the proof of Theorem~\ref{Theorem.total.number.of.zeroes.F.general}, see~\eqref{total.number.trivial.zeroes.F.kappa}.

If the polynomial $p$ has a unique multiple zero, then from~\eqref{F.for.unique.multiple.root} it follows that
$F_{\tfrac{n-1}{n}}[p](z)\equiv0$ as we mentioned above.
\end{proof}
\begin{remark}\label{Remark.excep}
We note that by the aforementioned de Gua's rule, Theorem~\ref{Theorem.total.number.of.zeroes.Q.exceptional} is not applicable for polynomials with only real zeroes for $l\geqslant2$. Indeed, if  $p$ is of the form~\eqref{Real.poly.extr.case}, then its $(n-l-1)^{th}$ derivative is as follows
$$
p^{(n-l-1)}(z)=\dfrac{a_0n!}{(l+1)!!}\left(z-\dfrac{a_1}{a_0n}\right)^{l+1}+C,
$$
where $C$ is a non-zero constant. It is clear now that for $l\geqslant2$ the polynomial $p$ does not satisfy the inequality~\eqref{de.Gua.ineq.gen} for zeroes of the $(n-l)^{th}$ derivative of $p$, so it cannot have only real zeroes.
\end{remark}

The proof of Theorem~\ref{Theorem.total.number.of.zeroes.Q.exceptional} implies the following curious fact
on the lower bound for the number of distinct zeroes of a polynomial.
\begin{corol}
Let $p$ be a complex polynomial of
the form
\begin{equation*}
p(z)=a_0(z-\lambda)^n+q(z),\qquad n\geqslant2,
\end{equation*}
where $\deg q=k-1$ for some $k$, $1\leqslant k\leqslant n-1$. Then the number $d$ of \textbf{distinct} zeroes of $p$ satisfies the inequality
\begin{equation}\label{lower.bound.distinct.zeroes.p}
d\geqslant\left\lfloor\dfrac{n-k}{2}\right\rfloor+2\geqslant2.
\end{equation}
%
\end{corol}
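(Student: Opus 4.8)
The plan is to read the hypothesis as a single instance of the normal form~\eqref{Real.poly.extr.case} and then invoke Theorem~\ref{Theorem.total.number.of.zeroes.Q.exceptional} directly. Writing $p(z)=a_0z^n+a_1z^{n-1}+\cdots$, the assumption $\deg q=k-1$ with $k\leqslant n-1$ gives $\deg q\leqslant n-2$, so the coefficient of $z^{n-1}$ in $p$ is contributed entirely by the power $(z-\lambda)^n$; comparing it with $a_1$ shows $\lambda=-a_1/(a_0n)$, so $p$ is \emph{exactly} of the shape~\eqref{Real.poly.extr.case}. Matching the degree of the remainder against the convention $\deg q=n-l-1$ of Theorem~\ref{Theorem.total.num.zeros.F.except} forces $l=n-k$, and since $1\leqslant k\leqslant n-1$ we obtain $1\leqslant l\leqslant n-1$, which is precisely the admissible range in that theorem.

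Before applying the counting formula I would dispose of the degenerate case in which $p$ has a single zero of multiplicity $n$. If $p(z)=a_0(z-\mu)^n$, then comparing the coefficients of $z^n$ and $z^{n-1}$ (again using $\deg q\leqslant n-2$) yields $\mu=\lambda$ and hence $q\equiv0$; but $\deg q=k-1\geqslant0$ means $q\neq0$, a contradiction. Therefore $p$ has at least two distinct zeroes, $d\geqslant2$, and Theorem~\ref{Theorem.total.number.of.zeroes.Q.exceptional} applies verbatim, giving
\[
Z_{nt}\left(F_{\tfrac{n-1}{n}}\right)=2d-3-l\geqslant0 .
\]

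The remaining content is now purely the nonnegativity clause $2d-3-l\geqslant0$ with $l=n-k$. It rearranges to $d\geqslant\tfrac{n-k+3}{2}$, and since $d$ is an integer this is equivalent to $d\geqslant\lceil\tfrac{n-k+3}{2}\rceil$. I would then verify the elementary identity $\lceil\tfrac{m+3}{2}\rceil=\lfloor\tfrac{m}{2}\rfloor+2$ for every integer $m\geqslant0$ by splitting into the cases $m$ even and $m$ odd; applied with $m=n-k\geqslant1$ it produces exactly $d\geqslant\lfloor\tfrac{n-k}{2}\rfloor+2$, and $\lfloor\tfrac{n-k}{2}\rfloor\geqslant0$ supplies the trailing $\geqslant2$.

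I do not expect a genuine obstacle: the entire statement is a repackaging of the inequality $2d-3-l\geqslant0$ that Theorem~\ref{Theorem.total.number.of.zeroes.Q.exceptional} already records as automatic. The only steps needing care are the bookkeeping that turns $\deg q=k-1$ into $l=n-k$, the parity check behind the floor/ceiling identity, and — the one genuinely conceptual point — the verification that the excluded single-multiple-zero case cannot arise, which is guaranteed by $\deg q=k-1\geqslant0$.
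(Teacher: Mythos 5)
Your proof is correct and follows essentially the same route as the paper: both reduce the statement to the nonnegativity clause $2d-3-l\geqslant0$ of Theorem~\ref{Theorem.total.number.of.zeroes.Q.exceptional} with $l=n-k$ and then round using integrality of $d$. Your write-up merely makes explicit some details the paper leaves implicit (that the hypothesis forces $\lambda=-a_1/(a_0n)$ so $p$ matches the normal form~\eqref{Real.poly.extr.case}, that $q\not\equiv0$ rules out the single-multiple-zero degeneracy, and the floor/ceiling bookkeeping), all of which check out.
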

\begin{proof}
According to~\eqref{number.of.zeroes.Q.exceptional}, the number $2d-3-(n-k)$ is nonnegative, so we have
\begin{equation*}
d\geqslant\dfrac{n-k+3}{2}.
\end{equation*}
Since $d$ is integer, inequality~\eqref{lower.bound.distinct.zeroes.p} holds.

\end{proof}

In the sequel we use the following auxiliary rational function
\begin{equation}\label{main.function.Q}
Q_{\varkappa}[p](z)=\dfrac{p(z)p''(z)-\varkappa[p'(z)]^2}{p^2(z)}.
\end{equation}

It is easy to see that the set of all non-trivial zeroes of $F_{\varkappa}[p]$ coincides with the set of all zeroes of~$Q_{\varkappa}[p]$.
\begin{remark}\label{Remark.unique.zero}
If the polynomial $p$ has a unique zero $\lambda_1$ of multiplicity $n$, then $Q_{\varkappa}[p](z)=\dfrac{C}{(z-\lambda_1)^2}$,
where the constant $C$ equals zero if and only if $\varkappa=\dfrac{n-1}{n}$.
\end{remark}

\setcounter{equation}{0}
\section{The number of non-real zeroes of the polynomial $F_{\varkappa}[p]$ when $p$ has only real zeroes}\label{section:real.zeroes}

In this section, we estimate the number of non-real zeroes of the polynomial $F_{\varkappa}[p]$ defined in~\eqref{F.polynomial}
provided~$p$ is a real polynomial of degree $n$, $n\geqslant2$, with only real zeroes.

Let
\begin{equation}\label{Lemma.concave.R.proof.1}
p(z)=a_0\prod\limits_{k=1}^d(z-\lambda_k)^{n_k},\qquad\sum\limits_{k=1}^dn_k=n,\qquad a_0>0,\ d\leqslant n.
\end{equation}
Then
\begin{equation}\label{Lemma.concave.R.proof.2}
p'(z)=a_0n\prod\limits_{k=1}^d(z-\lambda_k)^{n_k-1}\cdot\prod\limits_{j=1}^{d-1}(z-\mu_j),
\end{equation}
where we fix the order of the zeroes  indexing as follows
\begin{equation}\label{Zeroes.ineq}
\lambda_1<\mu_1<\lambda_2<\cdots<\lambda_{d-1}<\mu_{d-1}<\lambda_d.
\end{equation}

The simplicity of the zeroes $\mu_j$, $j=1,\ldots, d-1$, of the polynomial $p'$ is guaranteed by de Gua's rule. By the same rule, we have
\begin{equation}\label{de.Gua.ineq}
p(\mu_k)p''(\mu_k)<0,\qquad k=1,\ldots,d-1.
\end{equation}
%

The following auxiliary lemma will be of use in the sequel.
\begin{theorem}\label{Lemma.concave.R}
If a polynomial $p$ of degree $n$, $d\geqslant2$, has only real zeroes,
then the rational function
\begin{equation}\label{function.R}
R(z)\eqdef\dfrac{p(z)p''(z)}{[p'(z)]^2}
\end{equation}
is concave between its poles and has the form
\begin{equation}\label{function.R.simple.frac}
R(z)=\dfrac{n-1}{n}+\sum\limits_{j=1}^{d-1}\dfrac{\beta_j}{(z-\mu_j)^2},
\end{equation}
where
\begin{equation}\label{Residues2.function.R}
\beta_j=\dfrac{p(\mu_j)}{p''(\mu_j)}<0,\qquad j=1,\ldots,d-1.
\end{equation}
Here $d$ is the number of \textbf{distinct} zeroes of $p$, and $\mu_j$, $j=1,\ldots,d-1$, are simple distinct
zeroes of~$p'$ such that $p(\mu_j)\neq0$, $j=1,\ldots,d-1$. Moreover, the function $R$ is concave between its poles.
\end{theorem}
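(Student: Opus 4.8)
The plan is to reduce everything to a single partial-fraction decomposition by exploiting the elementary identity
\[
\dfrac{d}{dz}\left(\dfrac{p(z)}{p'(z)}\right)=\dfrac{[p'(z)]^2-p(z)p''(z)}{[p'(z)]^2}=1-R(z),
\]
so that $R(z)=1-h'(z)$ with $h\eqdef p/p'$. Thus it suffices to understand the rational function $h$ and then differentiate once.

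First I would determine the poles of $h$. At each $\lambda_k$ the numerator $p$ vanishes to order $n_k$ while the denominator $p'$ vanishes to order exactly $n_k-1$ by~\eqref{Lemma.concave.R.proof.2}, so $\lambda_k$ is a simple \emph{zero} of $h$, not a pole. At each $\mu_j$ the zero of $p'$ is simple and $p(\mu_j)\neq0$ (both guaranteed by de Gua's rule), so $\mu_j$ is a simple pole of $h$. Since $\deg p=n$ and $\deg p'=n-1$, the polynomial part of $h$ has degree one and $h(z)=z/n+O(1)$ as $|z|\to\infty$. Hence $h$ admits the decomposition
\[
h(z)=\dfrac{z}{n}+c+\sum_{j=1}^{d-1}\dfrac{r_j}{z-\mu_j},
\]
and the standard residue formula for a quotient at a simple zero of the denominator gives $r_j=\operatorname{Res}_{\mu_j}\dfrac{p}{p'}=\dfrac{p(\mu_j)}{p''(\mu_j)}$ (here $p''(\mu_j)\neq0$, again by de Gua's rule).

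Differentiating this decomposition and substituting into $R=1-h'$ yields
\[
R(z)=1-\dfrac1n+\sum_{j=1}^{d-1}\dfrac{r_j}{(z-\mu_j)^2}=\dfrac{n-1}{n}+\sum_{j=1}^{d-1}\dfrac{\beta_j}{(z-\mu_j)^2},\qquad \beta_j=\dfrac{p(\mu_j)}{p''(\mu_j)},
\]
which is exactly~\eqref{function.R.simple.frac}. The inequality $\beta_j<0$ then follows at once from $p(\mu_j)p''(\mu_j)<0$ (de Gua's rule,~\eqref{de.Gua.ineq}) upon writing $\beta_j=p(\mu_j)p''(\mu_j)/[p''(\mu_j)]^2$. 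Finally, concavity is immediate: differentiating~\eqref{function.R.simple.frac} twice gives $R''(z)=\sum_{j=1}^{d-1}6\beta_j/(z-\mu_j)^4$, which is strictly negative at every real $z$ that is not a pole, so $R$ is strictly concave on each interval of $\mathbb{R}$ containing no $\mu_j$.

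I expect the only delicate point to be the pole-counting in the second step: one must verify carefully that the factors at each $\lambda_k$ cancel so as to leave $h$ \emph{regular} there, whence the $\mu_j$ are the sole poles. This is precisely where the multiplicity bookkeeping from~\eqref{Lemma.concave.R.proof.2} together with the non-vanishing $p(\mu_j)\neq0$ from de Gua's rule are indispensable; everything else reduces to routine differentiation and a single residue computation.
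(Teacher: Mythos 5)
Your proof is correct, and it reaches \eqref{function.R.simple.frac} by a genuinely different organization of the same ingredients. Both arguments rest on the identity $R=1-\left(p/p'\right)'$ and on de Gua's rule, but the paper uses them in the opposite order: it first writes down the general partial-fraction form of $R$ itself, namely $R(z)=\tfrac{n-1}{n}+\sum_{j}\bigl(\tfrac{\alpha_j}{z-\mu_j}+\tfrac{\beta_j}{(z-\mu_j)^2}\bigr)$, computing the constant $\tfrac{n-1}{n}$ as the ratio of the leading coefficients of $pp''$ and $(p')^2$, and only then invokes the identity \emph{locally} near each $\mu_j$ to expand $R$ there and conclude that every $\alpha_j$ vanishes while $\beta_j=p(\mu_j)/p''(\mu_j)$. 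You instead decompose the simpler function $h=p/p'$, all of whose poles are simple, and differentiate once. This buys two small economies: the absence of first-order terms in $R$ is automatic (differentiating $r_j/(z-\mu_j)$ can only produce a pure double pole, so there are no coefficients $\alpha_j$ to introduce and then eliminate), and the constant $\tfrac{n-1}{n}=1-\tfrac1n$ falls out of the polynomial part $z/n+c$ of $h$ rather than from a separate limit computation at infinity. The remaining steps --- the residue formula $r_j=p(\mu_j)/p''(\mu_j)$ justified by $p''(\mu_j)\neq0$, the negativity of $\beta_j$ via \eqref{de.Gua.ineq}, and concavity from $R''(z)=\sum_j 6\beta_j/(z-\mu_j)^4<0$ --- coincide with the paper's. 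Your multiplicity bookkeeping at the $\lambda_k$ (a zero of order $n_k$ of $p$ over a zero of order exactly $n_k-1$ of $p'$, hence a simple zero of $h$, not a pole) correctly settles the one delicate point, that the $\mu_j$ are the only poles of $h$; this mirrors the paper's observation that the trivial common zeroes of $pp''$ and $(p')^2$ cancel, so both proofs are equally rigorous on that score.
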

\begin{proof}
Consider a polynomial $p$ as in~\eqref{Lemma.concave.R.proof.1}, and note that $\deg [pp'']=\deg[(p')^2]$, so $\lim\limits_{|z|\to\infty}R(z)$ is equal to
the ratio of the leading coefficients of the polynomials $pp''$ and $(p')^2$. Since the leading
coefficient of $pp''$ equals $a_0^2n(n-1)$, and the leading coefficient of $(p')^2$ is $a_0^2n^2$,
we have
$$
\lim\limits_{|z|\to\infty}R(z)=\dfrac{a_0^2n(n-1)}{a_0^2n^2}=\dfrac{n-1}{n}.
$$
Furthermore, it is clear that the numbers $\mu_j$, $j=1,\ldots,d-1$ are the only poles of the function $R$, and
$p(\mu_j)p''(\mu_j)\neq0$ by~\eqref{de.Gua.ineq}. Thus, $R$ has the form
\begin{equation}\label{Lemma.concave.R.proof.3}
R(z)=\dfrac{n-1}{n}+\sum\limits_{j=1}^{d-1}\left(\dfrac{\alpha_j}{z-\mu_j}+\dfrac{\beta_j}{(z-\mu_j)^2}\right).
\end{equation}

However, it is easy to see that $\mu_j$ defined in~\eqref{Lemma.concave.R.proof.2}--\eqref{Zeroes.ineq} is a simple pole of the function $\dfrac{p}{p'}$ with the residue $p(\mu_j)/p''(\mu_j)$:
$$
\dfrac{p(z)}{p'(z)}=\dfrac{p(\mu_j)/p''(\mu_j)}{z-\mu_j}+O(1)\qquad\text{as}\qquad z\to\mu_j,
$$
so
$$
R(z)=1-\left(\dfrac{p(z)}{p'(z)}\right)'=
\dfrac{p(\mu_j)/p''(\mu_j)}{(z-\mu_j)^2}+O(1)\qquad\text{as}\qquad z\to\mu_j.
$$
Consequently, in~\eqref{Lemma.concave.R.proof.3} the coefficients
$\alpha_j$ are all zero, and the coefficients $\beta_j$ are defined by formula~\eqref{Residues2.function.R}, and their negativity follows
from~\eqref{de.Gua.ineq}.

Finally, the function $R''$ has the form
$$
R''(z)=\sum\limits_{j=1}^{d-1}\dfrac{3!\beta_{j}}{(z-\mu_j)^4},
$$
so it is negative at any real point where it exists, hence on the real line $R$ is concave between its poles.
\end{proof}

From formul\ae~\eqref{function.R}--\eqref{function.R.simple.frac} it easy to
describe the location of zeroes of the function $R$.

\begin{theorem}\label{Theorem.R.zeroes}
Let a polynomial $p$ of degree $n\geqslant2$ with only real zeroes and its derivative $p'$ be defined as in~\eqref{Lemma.concave.R.proof.1}--\eqref{Zeroes.ineq}. Then the function $R$ defined in~\eqref{function.R} has exactly one zero (counting multiplicities) on each of the intervals $(-\infty,\mu_1)$ and $(\mu_{d-1},+\infty)$, and exactly two zeroes (counting multiplicities) on each of the intervals $(\mu_j,\mu_{j+1})$, $j=1,\ldots,d-2$.
\end{theorem}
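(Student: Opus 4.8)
The plan is to exploit the closed form $R(z)=\frac{n-1}{n}+\sum_{j=1}^{d-1}\frac{\beta_j}{(z-\mu_j)^2}$ from~\eqref{function.R.simple.frac}, together with the concavity of $R$ between its poles (Theorem~\ref{Lemma.concave.R}) and the negativity $\beta_j<0$ from~\eqref{Residues2.function.R}. Writing $R=N/D$ with $D(z)=\prod_{j=1}^{d-1}(z-\mu_j)^2$, one checks that $N$ is a polynomial of degree exactly $2d-2$, its leading coefficient being $\frac{n-1}{n}\neq0$, and that $N(\mu_k)=\beta_k\prod_{i\neq k}(\mu_k-\mu_i)^2\neq0$, so no cancellation occurs and the zeroes of $R$ coincide with those of $N$. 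The real zeroes of $R$ are therefore confined to the $d$ intervals $(-\infty,\mu_1),(\mu_1,\mu_2),\ldots,(\mu_{d-1},+\infty)$, and the strategy is to pin down the exact number on each one; since these numbers will sum to $2d-2=\deg N$, the count must be exact, with no zeroes (in particular no non-real ones) left over.

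For the two unbounded intervals I would differentiate, $R'(z)=-2\sum_{j}\beta_j(z-\mu_j)^{-3}$. On $(-\infty,\mu_1)$ every factor $(z-\mu_j)^3$ is negative while $-\beta_j>0$, so $R'<0$ throughout; as $R$ decreases strictly from $\lim_{z\to-\infty}R(z)=\frac{n-1}{n}>0$ to $\lim_{z\to\mu_1^-}R(z)=-\infty$, it has exactly one zero there. Symmetrically, on $(\mu_{d-1},+\infty)$ one gets $R'>0$ and a strict increase from $-\infty$ to $\frac{n-1}{n}$, hence exactly one zero.

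For an interior interval $(\mu_j,\mu_{j+1})$, $1\leqslant j\leqslant d-2$, the key observation is that the ordering~\eqref{Zeroes.ineq} places exactly one zero $\lambda_{j+1}$ of $p$ inside it, and the local expansion~\eqref{Real.poly.multiple.zero} gives $R(\lambda_{j+1})=\frac{n_{j+1}-1}{n_{j+1}}\geqslant0$ (this equals $0$ for a simple zero and is strictly positive for a multiple one). Since $R$ is strictly concave on $(\mu_j,\mu_{j+1})$ and tends to $-\infty$ at both endpoints (each $\mu_i$ being a double pole with negative coefficient), it attains a unique maximum whose value is at least $R(\lambda_{j+1})\geqslant0$; consequently $R$ is positive on a nondegenerate subinterval, or touches $0$ at a single point, and therefore crosses zero exactly twice counting multiplicity. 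Thus each interior interval contributes exactly two zeroes.

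The main obstacle is precisely this last step: a concave function blowing up to $-\infty$ at both ends could in principle stay entirely negative, and what forbids this is the nonnegative interior value $R(\lambda_{j+1})=\frac{n_{j+1}-1}{n_{j+1}}$. Identifying $\lambda_{j+1}$ as the right witness and computing $R$ there from~\eqref{Real.poly.multiple.zero} is the crux; everything else is bookkeeping. Summing the contributions, $1+2(d-2)+1=2d-2$, which matches $\deg N$ and confirms both that the per-interval counts are exact and that $R$ has no further zeroes.
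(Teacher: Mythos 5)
Your proposal is correct, but it settles the crucial interior-interval count by a genuinely different mechanism than the paper. The paper's proof is global: it first shows (via the monotonicity of $R$ on the unbounded intervals, exactly as you do) that each of $(-\infty,\mu_1)$ and $(\mu_{d-1},+\infty)$ carries one zero, and then invokes the fact that $p$ real-rooted forces $p''$ real-rooted (Rolle), so that $pp''$ has $2n-2$ real zeroes of which exactly $2n-2d$ cancel against $(p')^2$; hence $R$ has exactly $2d-2$ real zeroes, and since concavity caps each interval $(\mu_k,\mu_{k+1})$ at two zeroes, a pigeonhole argument forces exactly two in each. Your proof instead works locally on each interior interval: you identify the zero $\lambda_{j+1}\in(\mu_j,\mu_{j+1})$ of $p$ as a witness, compute $R(\lambda_{j+1})=\tfrac{n_{j+1}-1}{n_{j+1}}\geqslant0$ from the expansion~\eqref{Real.poly.multiple.zero}, and conclude that the unique maximum of the strictly concave function $R$ (which tends to $-\infty$ at both endpoints) is nonnegative, giving either two simple crossings or one double zero at the maximum point (the latter indeed has multiplicity exactly two since $R''<0$). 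This is a valid and self-contained argument; what it buys is independence from the real-rootedness of $p''$ and from the cancellation bookkeeping — you only need the partial-fraction form and concavity from Theorem~\ref{Lemma.concave.R} plus the value of $R$ at a zero of $p$ (a fact the paper itself records later as~\eqref{R.at.multiple.zero}). What the paper's route buys is that the total count $2d-2$ of real zeroes is obtained as a primary fact rather than as a final consistency check; in your version the degree count of the numerator $N$ plays that confirming role and additionally yields, as a free byproduct, that $R$ has no non-real zeroes at all.
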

\begin{proof}
From~\eqref{function.R.simple.frac} it follows that
\begin{equation}\label{R.at.infinity}
R(x)\to\dfrac{n-1}{n}\quad\text{as}\quad x\to\pm\infty.
\end{equation}
Moreover, the function
$$
R'(z)=-\sum\limits_{j=1}^{d-1}\dfrac{2!\beta_j}{(z-\mu_j)^3}
$$
is decreasing between its poles (its derivative $R''$ is negative on $\mathbb{R}$ where it exists), and $R'(x)\to\pm0$ as $x\to\pm\infty$.
Consequently, $R'(x)<0$ in the interval $(-\infty,\mu_1)$, and $R'(x)>0$ in the interval $(\mu_{d-1},+\infty)$. (We remind the reader that
the zeroes of $p$ and $p'$ are indexed in the order~\eqref{Zeroes.ineq}.)
Thus, the function~$R$ decreases from $\dfrac{n-1}{n}$ to $-\infty$ on $(-\infty,\mu_1)$, and
increases from $-\infty$ to $\dfrac{n-1}{n}$ on $(\mu_{d-1},+\infty)$.
%
%

\vspace{2mm}

The monotone behaviour of the function $R$ on the intervals $(-\infty,\mu_1)$ and $(\mu_{d-1},+\infty)$
shows that~$R(z)$ has exactly one zero, counting multiplicities, on each of these intervals.
Furthermore, since $p$ has only real zeroes, the function $R$ has exactly $2d-2$ real zeroes
(possibly multiple), since $pp''$ has exactly $2n-2$ zeroes, $2n-2d$ of which are common with $(p')^2$.
The concavity of $R$ between its poles implies that $R$ has exactly $2$ zeroes, counting multiplicities, in each interval $(\mu_k,\mu_{k+1})$, $k=1,\ldots,d-2$, so $R$ has $2d-4$ zeroes in the interval $(\mu_1,\mu_{d-1})$.
\end{proof}

The most important property of the function $R$ is represented by the following theorem.
\begin{theorem}\label{Theorem.equations.equivalence}
Given a real polynomial $p$ with only real zeroes and $d\geqslant 2$, for any $\varkappa\in\mathbb{R}$, $\varkappa\neq\dfrac{n-1}{n}$, the equation
\begin{equation}\label{R.kappa.equation}
R(z)=\varkappa,
\end{equation}
has exactly $2d-2$ solutions. Moreover, if $\varkappa\neq\dfrac{m-1}{m}$, $m=1,\ldots,n-1$, then the set of solutions of  equation~\eqref{R.kappa.equation} coincides with the set of non-trivial zeroes of $F_{\varkappa}[p]$.

If $\varkappa=\dfrac{m-1}{m}$ for certain $m$, $1\leqslant m\leqslant n-1$, then the set of solutions of   equation~\eqref{R.kappa.equation} coincides with the set of non-trivial zeroes of $F_{\varkappa}[p]$, except the zeroes of the polynomial $p$ of multiplicity $m$.

In particular, the set of non-real zeroes of the polynomial $F_{\varkappa}[p]$ defined in~\eqref{F.polynomial} coincides with the set of
non-real solutions of equation~\eqref{R.kappa.equation}.
\end{theorem}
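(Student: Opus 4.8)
The plan is to exploit two representations of the objects involved simultaneously. The starting point is the pointwise identity
$$
F_{\varkappa}[p](z)=[p'(z)]^2\bigl(R(z)-\varkappa\bigr),
$$
which holds as an identity of rational functions because $R=pp''/(p')^2$. Combining it with the partial-fraction form~\eqref{function.R.simple.frac} from Theorem~\ref{Lemma.concave.R}, I write $R(z)-\varkappa=P_{\varkappa}(z)\big/\prod_{j=1}^{d-1}(z-\mu_j)^2$, where
$$
P_{\varkappa}(z)=\Bigl(\tfrac{n-1}{n}-\varkappa\Bigr)\prod_{j=1}^{d-1}(z-\mu_j)^2+\sum_{j=1}^{d-1}\beta_j\prod_{i\neq j}(z-\mu_i)^2.
$$
Since $[p'(z)]^2=a_0^2n^2\prod_{k}(z-\lambda_k)^{2(n_k-1)}\prod_{j}(z-\mu_j)^2$ by~\eqref{Lemma.concave.R.proof.2}, the poles cancel and I obtain the clean factorization
$$
F_{\varkappa}[p](z)=a_0^2n^2\prod_{k=1}^{d}(z-\lambda_k)^{2(n_k-1)}\,P_{\varkappa}(z),
$$
whose consistency is confirmed by the degree count $\sum_k 2(n_k-1)+\deg P_{\varkappa}=(2n-2d)+(2d-2)=2n-2$.

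First I would settle the counting claim. When $\varkappa\neq\frac{n-1}{n}$ the leading coefficient $\frac{n-1}{n}-\varkappa$ of $P_{\varkappa}$ is nonzero, so $\deg P_{\varkappa}=2d-2$. Evaluating $P_{\varkappa}(\mu_j)=\beta_j\prod_{i\neq j}(\mu_j-\mu_i)^2\neq0$, using $\beta_j\neq0$ from~\eqref{Residues2.function.R} and the distinctness of the $\mu_i$, shows that none of the poles $\mu_j$ of $R$ is a root of $P_{\varkappa}$; hence the $2d-2$ roots of $P_{\varkappa}$ are exactly the solutions (counting multiplicities) of $R(z)=\varkappa$. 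This proves the first assertion.

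The second step is the bookkeeping that separates trivial from non-trivial zeroes. From the factorization, the zero set of $F_{\varkappa}[p]$ is the union of the $\lambda_k$ and the roots of $P_{\varkappa}$, so the non-trivial zeroes are precisely those roots of $P_{\varkappa}$ that are not among the $\lambda_k$. Whether a given $\lambda_k$ is such a root is decided by $R(\lambda_k)=\varkappa$: since $\prod_i(\lambda_k-\mu_i)^2\neq0$, one has $P_{\varkappa}(\lambda_k)=0\iff R(\lambda_k)=\varkappa$, and the local expansion~\eqref{Real.poly.multiple.zero.2.F} gives $R(\lambda_k)=\frac{n_k-1}{n_k}$ (with $R(\lambda_k)=0$ when $n_k=1$). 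Thus, if $\varkappa\neq\frac{m-1}{m}$ for every $m$, then $R(\lambda_k)\neq\varkappa$ for all $k$, no $\lambda_k$ is a root of $P_{\varkappa}$, and the solution set of $R=\varkappa$ coincides exactly with the non-trivial zero set. If instead $\varkappa=\frac{m-1}{m}$, then $P_{\varkappa}$ vanishes precisely at the $\lambda_k$ with $n_k=m$; these are trivial zeroes of $p$ that occur among the solutions of $R=\varkappa$ but are excluded from the non-trivial zeroes, which is exactly the stated exception.

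The main obstacle I anticipate is this exceptional bookkeeping: I must verify that the zeroes of $p$ of multiplicity $m$ are the \emph{only} discrepancy between the two sets, and that they are genuinely zeroes of $F_{\varkappa}$ (so that ``except'' correctly describes the mismatch and in the right direction). This amounts to confirming, via the local expansion~\eqref{Real.poly.multiple.zero.2.F} and de Gua's rule as in the proof of Corollary~\ref{Corol.total.number.of.zeroes.F.general}, that at $\varkappa=\frac{m-1}{m}$ a multiplicity-$m$ zero of $p$ is a zero of $F_{\varkappa}$ of order $2m-1$ or $2m$, equivalently a root of $P_{\varkappa}$ of order $1$ or $2$, with no further cancellation. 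Finally, the last (particular) assertion is immediate: every $\lambda_k$ is real, so the symmetric difference between the two sets is contained in $\mathbb{R}$, and therefore the non-real solutions of $R=\varkappa$ and the non-real (hence necessarily non-trivial) zeroes of $F_{\varkappa}[p]$ coincide.
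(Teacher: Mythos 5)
Your proof is correct, and it differs from the paper's in its counting mechanism, though both rest on the same two pillars: the identity $F_{\varkappa}[p]=[p']^2(R-\varkappa)$ and the value $R(\lambda_k)=\tfrac{n_k-1}{n_k}$ at a zero of $p$ of multiplicity $n_k$. The paper works with the quotient $F_{\varkappa}[p]/[p']^2$ and counts locally: by the expansion~\eqref{Real.poly.multiple.zero.2.F}, a zero of $p$ of multiplicity $m\geqslant2$ is a zero of $F_{\varkappa}[p]$ of order at least $2m-2$ and of $[p'(z)]^2$ of order exactly $2m-2$, so the common zeroes total $2n-2d$ by~\eqref{total.degree}; subtracting from $\deg F_{\varkappa}=2n-2$ (valid for $\varkappa\neq\tfrac{n-1}{n}$ by~\eqref{leading.coeff}) gives the count $2d-2$, and the set comparison then follows from~\eqref{R.at.multiple.zero}. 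You instead make the cancellation global and explicit: the partial-fraction form~\eqref{function.R.simple.frac} together with the factorization~\eqref{Lemma.concave.R.proof.2} of $p'$ produces the polynomial identity $F_{\varkappa}[p](z)=a_0^2n^2\prod_{k}(z-\lambda_k)^{2(n_k-1)}P_{\varkappa}(z)$, after which the count is just $\deg P_{\varkappa}=2d-2$ (nonzero leading coefficient $\tfrac{n-1}{n}-\varkappa$) plus the check $P_{\varkappa}(\mu_j)=\beta_j\prod_{i\neq j}(\mu_j-\mu_i)^2\neq0$, which uses $\beta_j<0$ from~\eqref{Residues2.function.R} and rules out solutions hiding at the poles of $R$. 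Your route buys a cleaner handling of multiplicities (a solution's multiplicity is by construction its multiplicity as a root of $P_{\varkappa}$), at the price of invoking Theorem~\ref{Lemma.concave.R} as input; the paper's route avoids that dependence and reuses the expansions it needs anyway for Theorem~\ref{Theorem.total.number.of.zeroes.F.general} and Corollary~\ref{Corol.total.number.of.zeroes.F.general}.

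Two small remarks. Your sentence ``the zero set of $F_{\varkappa}[p]$ is the union of the $\lambda_k$ and the roots of $P_{\varkappa}$'' is slightly too strong: a \emph{simple} zero $\lambda_k$ belongs to that zero set only if it happens to be a root of $P_{\varkappa}$ (as it is when $\varkappa=0$); this does not affect your conclusion, which only needs that the non-trivial zeroes are exactly the roots of $P_{\varkappa}$ not among the $\lambda_k$. Also, the ``main obstacle'' you flag --- pinning down the order of vanishing of $P_{\varkappa}$ at a multiplicity-$m$ zero when $\varkappa=\tfrac{m-1}{m}$ via de Gua's rule --- is not actually needed here: every assertion of the theorem concerns \emph{sets} of solutions and zeroes, and your argument already establishes them; that refinement matters only for multiplicity-sensitive counts such as Corollary~\ref{Corol.total.number.of.zeroes.F.general}.
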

\begin{proof}
It is clear that equation~\eqref{R.kappa.equation} is equivalent to the equation
\begin{equation*}\label{Theorem.equations.equivalence.proof.1}
\dfrac{F_{\varkappa}[p](z)}{[p'(z)]^2}=0.
\end{equation*}
If $\lambda$ is a zero of $p$ of multiplicity $m\geqslant2$, then by~\eqref{Real.poly.multiple.zero.2.F} it is a zero of the polynomial $F_{\varkappa}[p]$ of multiplicity \textit{at~least} $2m-2$ and is a zero of $(p')^2$ of multiplicity exactly $2m-2$. Thus, from~\eqref{total.degree} it follows that the total number of common zeroes of $F_{\varkappa}[p]$ and $(p')^2$, counting multiplicities, equals $2n-2d$ for any $\varkappa\in\mathbb{R}$ (including $\varkappa=\dfrac{n-1}{n}$). Since for any $\varkappa\in\mathbb{R}$, $\varkappa\neq\dfrac{n-1}{n}$, the total number of zeroes of $F_{\varkappa}[p]$ is $2n-2$ by~\eqref{leading.coeff}, we have that the total number of solutions of equation~\eqref{R.kappa.equation} equals $2d-2$.

Now let us notice the following simple fact. If $\lambda_k$ is a zero
of $p$ of multiplicity $n_k\geqslant1$, then from~\eqref{Real.poly.multiple.zero}, it follows that
\begin{equation}\label{R.at.multiple.zero.expansion}
\begin{array}{l}
R(z)=\dfrac{n_{k}-1}{n_{k}}+\dfrac{2B}{An_{k}^2}(z-\lambda_{k})+\\
\\
+\dfrac{3\left[(n_{k}+1)B^2+2A\cdot C\cdot n_{k}\right]}{A^2n_{k}^3}(z-\lambda_{k})^2+O\left((z-\lambda_{k})^3\right)\qquad\text{as}\quad z\to\lambda_k,
\end{array}
\end{equation}
therefore,
\begin{equation}\label{R.at.multiple.zero}
R(\lambda_k)=\dfrac{n_k-1}{n_k}.
\end{equation}

Consequently, if $\varkappa\neq\dfrac{m-1}{m}$, $m=1,\ldots,n-1$, then
the zeroes of the polynomial $p$ do not solve equation~\eqref{R.kappa.equation}. So the set of zeroes of $F_{\varkappa}[p]$, and the set of solutions of~\eqref{R.kappa.equation} coincide.

If $\varkappa=\dfrac{m-1}{m}$ for certain $m$, $1\leqslant m\leqslant n-1$, then any zero of $p$ of multiplicity $m$ is a solution to equation~\eqref{R.kappa.equation}. Such solutions are trivial zeroes of the polynomial $F_{\varkappa}[p]$. However, all other solutions (including all non-real solutions) of equation~\eqref{R.kappa.equation} are non-trivial zeroes of~$F_{\varkappa}[p]$, and only these.
\end{proof}

The exceptional case $\varkappa=\dfrac{n-1}{n}$ can be treated in a similar way with Theorem~\ref{Theorem.total.num.zeros.F.except} for\footnote{When $l>1$ in Theorem~\ref{Theorem.total.num.zeros.F.except}, the polynomial has non-real zeroes by de Gua's rule, so we exclude such a case from our investigation, see Remark~\ref{Remark.excep}} $l=1$.
\begin{corol}\label{Corollary.equations.equivalence.excep}
Given a real polynomial $p$ with only real zeroes and $d\geqslant 2$, for  $\varkappa=\dfrac{n-1}{n}$, equation~\eqref{R.kappa.equation}
has exactly $2d-4$ solutions. Moreover, the set of non-real zeroes of the polynomial $F_{\tfrac{n-1}{n}}[p]$ defined in~\eqref{F.polynomial} coincides with the set of
non-real solutions of equation~\eqref{R.kappa.equation}.
\end{corol}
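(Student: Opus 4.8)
The plan is to run the proof of Theorem~\ref{Theorem.equations.equivalence} again, substituting the correct degree of $F_{\tfrac{n-1}{n}}[p]$ in this exceptional case. The starting point is Theorem~\ref{Lemma.concave.R}: the equation $R(z)=\tfrac{n-1}{n}$ is equivalent to $\sum_{j=1}^{d-1}\beta_j(z-\mu_j)^{-2}=0$, and clearing the denominator $\prod_{j}(z-\mu_j)^2$ turns it into the polynomial equation $G(z)=0$ with $G(z)=\sum_{j=1}^{d-1}\beta_j\prod_{k\neq j}(z-\mu_k)^2$. Since every $\beta_j<0$, the coefficient of $z^{2d-4}$ in $G$, namely $\sum_j\beta_j$, is strictly negative, so $\deg G=2d-4$ and $G$ has exactly $2d-4$ roots counted with multiplicity. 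Evaluating $G(\mu_i)=\beta_i\prod_{k\neq i}(\mu_i-\mu_k)^2\neq0$ shows no root of $G$ is a pole of $R$, which already yields the first assertion that $R(z)=\tfrac{n-1}{n}$ has exactly $2d-4$ solutions. This also cross-checks Theorem~\ref{Theorem.total.num.zeros.F.except}: for real-rooted $p$ with $d\geqslant2$ distinct zeroes one has $l=1$ (the leading coefficient always drops, while de Gua's rule forbids $l\geqslant2$, see Remark~\ref{Remark.excep}), whence $\deg F_{\tfrac{n-1}{n}}[p]=2n-4$.

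Next I would identify the root set of $G$ with the non-trivial zeroes of $F_{\tfrac{n-1}{n}}[p]$. Using $R(z)-\tfrac{n-1}{n}=F_{\tfrac{n-1}{n}}[p](z)/[p'(z)]^2$ together with $[p'(z)]^2=a_0^2n^2\big(\prod_k(z-\lambda_k)^{n_k-1}\big)^2\prod_j(z-\mu_j)^2$ from~\eqref{Lemma.concave.R.proof.2}, multiplying back gives the clean factorization $F_{\tfrac{n-1}{n}}[p](z)=a_0^2n^2\big(\prod_k(z-\lambda_k)^{n_k-1}\big)^2 G(z)$. The squared product carries the trivial zeroes (each multiple zero $\lambda_k$ with multiplicity $2n_k-2$, totalling $2n-2d$), while $G$, of degree $2d-4$, carries the non-trivial ones. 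I would then confirm via~\eqref{R.at.multiple.zero} that $R(\lambda_k)=\tfrac{n_k-1}{n_k}\neq\tfrac{n-1}{n}$ for every $k$ (as $n_k<n$ since $d\geqslant2$), so that $G(\lambda_k)=\big(\prod_j(\lambda_k-\mu_j)\big)^2\big(R(\lambda_k)-\tfrac{n-1}{n}\big)\neq0$ and no root of $G$ coincides with a zero of $p$. Hence the zeroes of $G$ are exactly the non-trivial zeroes of $F_{\tfrac{n-1}{n}}[p]$, and they coincide with the solution set of $R(z)=\tfrac{n-1}{n}$.

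Finally, since $p$ has only real zeroes, every trivial zero of $F_{\tfrac{n-1}{n}}[p]$ is real, so each non-real zero is automatically non-trivial; combined with the previous step, the non-real zeroes of $F_{\tfrac{n-1}{n}}[p]$ are precisely the non-real solutions of $R(z)=\tfrac{n-1}{n}$, which is the second assertion. The step I expect to require the most care is establishing the degree $\deg F_{\tfrac{n-1}{n}}[p]=2n-4$ rigorously, i.e.\ that $l=1$ in Theorem~\ref{Theorem.total.num.zeros.F.except} for real-rooted $p$; in the partial-fraction picture this is exactly the strict inequality $\sum_j\beta_j<0$ forced by $\beta_j<0$, so the two degree computations agree and the count $2d-4$ is confirmed from both directions.
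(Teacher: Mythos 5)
Your proof is correct, and it reaches the corollary by a genuinely different route than the paper does. The paper's own justification is essentially one sentence: the exceptional value $\varkappa=\tfrac{n-1}{n}$ is treated with Theorem~\ref{Theorem.total.num.zeros.F.except} for $l=1$, where the accompanying footnote and Remark~\ref{Remark.excep} observe that de Gua's rule excludes $l\geqslant2$ for a real-rooted $p$ with $d\geqslant2$; this pins $\deg F_{\tfrac{n-1}{n}}[p]=2n-4$, after which the count $2d-4$ and the identification of non-real zeroes follow by repeating the bookkeeping in the proof of Theorem~\ref{Theorem.equations.equivalence} (the trivial zeroes absorb exactly $2n-2d$ of the degree, and~\eqref{R.at.multiple.zero} rules out zeroes of $p$ as solutions). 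You bypass Theorem~\ref{Theorem.total.num.zeros.F.except} altogether and work inside the partial-fraction picture of Theorem~\ref{Lemma.concave.R}: the polynomial $G(z)=\sum_{j}\beta_j\prod_{k\neq j}(z-\mu_k)^2$ has leading coefficient $\sum_j\beta_j$, which cannot vanish because all $\beta_j$ are negative --- the sign-definiteness~\eqref{Residues2.function.R}, which is where de Gua's rule enters for you --- so $\deg G=2d-4$ with no possible degree drop; the factorization $F_{\tfrac{n-1}{n}}[p](z)=a_0^2n^2\bigl(\prod_k(z-\lambda_k)^{n_k-1}\bigr)^2G(z)$, together with $G(\mu_i)\neq0$ and $G(\lambda_k)\neq0$ (the latter via~\eqref{R.at.multiple.zero} and $n_k<n$, valid since $d\geqslant2$), then yields both assertions at once, and in fact the slightly stronger statement that the full solution set of~\eqref{R.kappa.equation} coincides with the full set of non-trivial zeroes of $F_{\tfrac{n-1}{n}}[p](z)$. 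So both arguments rest on de Gua's rule, but at different points: the paper uses it to exclude $l\geqslant2$ in the coefficient analysis of Section~\ref{section:function.Q}, while you use it to prevent cancellation of the top coefficient of $G$. The paper's route buys brevity and reuse of a result proved for arbitrary complex polynomials; yours buys a self-contained argument within Section~\ref{section:real.zeroes}, an explicit factorization cleanly separating trivial from non-trivial zeroes, and an independent re-derivation of the fact $l=1$, which you correctly flag as the step needing the most care.
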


Thus, in what follows we count the number of non-real solutions of equation~\eqref{R.kappa.equation} or the number of non-real zeroes of the function $Q_{\varkappa}[p]$ that coincide with the number~$Z_C(F_{\varkappa}[p])$.

\vspace{5mm}

Now we are in a position to consider various intervals for the real parameter $\varkappa$.

\subsection{The cases $\varkappa\geqslant\dfrac{n-1}{n}$ and $\varkappa\leqslant0$.}

Formul\ae~\eqref{function.R.simple.frac}--\eqref{Residues2.function.R} imply that
\begin{equation}\label{Newton.inequality.via.R}
R(x)<\dfrac{n-1}{n}
\end{equation}
for any $x\in\mathbb{R}$ (as $x$ approaches a pole of $R(x)$, it tends to $-\infty$), so
equation~\eqref{R.kappa.equation}
has no real solutions~for
\begin{equation*}
\varkappa\geqslant\dfrac{n-1}{n}.
\end{equation*}
Thus, Theorem~\ref{Theorem.equations.equivalence} and Corollary~\ref{Corollary.equations.equivalence.excep} imply the following fact.
\begin{theorem}\label{Theorem.number.complex.zeroes.n-1.n}
Let $p$ be a real polynomial of degree $n$ with only real zeroes, and $d\geqslant2$, where $d$ is the number of \textbf{distinct} zeroes of the polynomial $p$. Then
\begin{equation}\label{Theorem.number.complex.zeroes.n-1.n.formula.1}
Z_{C}(F_{\varkappa}[p])=2d-2\qquad\text{for}\quad \varkappa>\dfrac{n-1}{n},
\end{equation}
and
\begin{equation}\label{Theorem.number.complex.zeroes.n-1.n.formula.2}
Z_{C}(F_{\varkappa}[p])=2d-4\qquad\text{for}\quad \varkappa=\dfrac{n-1}{n}.
\end{equation}
\end{theorem}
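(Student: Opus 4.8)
The plan is to reduce the whole statement to counting the non-real solutions of the equation $R(z)=\varkappa$ and then to observe that in both regimes there are simply \emph{no} real solutions, so that every solution is non-real. By Theorem~\ref{Theorem.equations.equivalence} and Corollary~\ref{Corollary.equations.equivalence.excep}, the quantity $Z_C(F_\varkappa)$ equals the number of non-real roots of~\eqref{R.kappa.equation}, while the \emph{total} number of roots (counting multiplicities) is $2d-2$ when $\varkappa\neq\frac{n-1}{n}$ and $2d-4$ when $\varkappa=\frac{n-1}{n}$. Hence the theorem will follow at once once I show that~\eqref{R.kappa.equation} has no real root for $\varkappa\geqslant\frac{n-1}{n}$.

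First I would invoke the partial-fraction representation~\eqref{function.R.simple.frac} supplied by Theorem~\ref{Lemma.concave.R}, namely $R(x)=\frac{n-1}{n}+\sum_{j=1}^{d-1}\frac{\beta_j}{(x-\mu_j)^2}$ with every residue $\beta_j<0$. Since $d\geqslant2$ there is at least one strictly negative summand, and for any real $x$ away from the poles $\mu_j$ each term $\frac{\beta_j}{(x-\mu_j)^2}$ is strictly negative, while $R(x)\to-\infty$ as $x$ approaches a pole. Thus $R(x)<\frac{n-1}{n}$ for every real $x$ in the domain of $R$, which is precisely inequality~\eqref{Newton.inequality.via.R}. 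Consequently $R(x)<\frac{n-1}{n}\leqslant\varkappa$ whenever $\varkappa\geqslant\frac{n-1}{n}$, so~\eqref{R.kappa.equation} admits no real solution in this range. This is the one substantive step, and it rests entirely on the negativity of the residues, which itself comes from de Gua's rule through~\eqref{de.Gua.ineq}; there is no genuine analytic difficulty here.

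It then remains to combine this with the two root counts. For $\varkappa>\frac{n-1}{n}$, Theorem~\ref{Theorem.equations.equivalence} gives $2d-2$ roots in total, none of them real, so all $2d-2$ are non-real and $Z_C(F_\varkappa)=2d-2$, which is~\eqref{Theorem.number.complex.zeroes.n-1.n.formula.1}. For $\varkappa=\frac{n-1}{n}$, Corollary~\ref{Corollary.equations.equivalence.excep} gives $2d-4$ roots in total, again none real, so all $2d-4$ are non-real and $Z_C(F_\varkappa)=2d-4$, which is~\eqref{Theorem.number.complex.zeroes.n-1.n.formula.2}. The only point I would take care to check is that the inequality $R(x)<\frac{n-1}{n}$ is \emph{strict} even in the boundary case $\varkappa=\frac{n-1}{n}$; this is exactly where the hypothesis $d\geqslant2$ enters, since it guarantees at least one pole and hence at least one strictly negative term, and it is what keeps all roots off the real axis. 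That subtlety, rather than any hard estimate, is the real crux of the argument.
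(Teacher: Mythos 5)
Your proposal is correct and follows essentially the same route as the paper: it reduces the claim to counting solutions of $R(z)=\varkappa$ via Theorem~\ref{Theorem.equations.equivalence} and Corollary~\ref{Corollary.equations.equivalence.excep}, then uses the partial-fraction form~\eqref{function.R.simple.frac} with negative residues $\beta_j$ (from de Gua's rule) to establish the strict bound $R(x)<\frac{n-1}{n}$ on the real axis, exactly as in~\eqref{Newton.inequality.via.R}. Your explicit remark that $d\geqslant2$ guarantees at least one strictly negative summand is a nice point of care, though the paper's argument implicitly contains the same observation.
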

Note that inequality~\eqref{Newton.inequality.via.R} is equivalent to the Newton inequality~\eqref{Newton.inequality} for polynomials with only real zeroes. Moreover, it is clear that
\begin{equation}\label{Newton.inequality.Q}
Q_{\varkappa}[p](x)<0,\qquad \varkappa\geqslant\dfrac{n-1}{n}
\end{equation}
for any $x\in\mathbb{R}$ where $Q_{\varkappa}[p](x)$ is finite.


\vspace{5mm}

Let now $\varkappa\leqslant0$. The following theorem shows that all the zeroes of $F_{\varkappa}[p]$ are real, in this case.
\begin{theorem}\label{Theorem.real.zeroes.nonpositive.kappa}
Let a real polynomial $p$ of degree $n$ have only real zeroes, and let $d\geqslant2$, where $d$ is the number of \textbf{distinct} zeroes of the polynomial $p$.

If $\varkappa\leqslant0$,
then
\begin{equation}\label{Q.neg.kappa.num.zeroes}
Z_{C}(F_{\varkappa}[p])=0. 
\end{equation}
Moreover, all the (real) non-trivial zeroes of $F_{\varkappa}[p]$ are simple for any $\varkappa<0$.
\end{theorem}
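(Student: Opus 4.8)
The plan is to count real solutions of the equation $R(z)=\varkappa$ and show that all $2d-2$ of them are real, so that none can be non-real. By Theorem~\ref{Theorem.equations.equivalence}, for any $\varkappa\leqslant0$ (hence $\varkappa\neq\tfrac{n-1}{n}$) the equation $R(z)=\varkappa$ has exactly $2d-2$ solutions in $\mathbb{C}$, and its non-real solutions are precisely the non-real zeroes of $F_{\varkappa}[p](z)$. Thus it suffices to exhibit $2d-2$ real solutions, counting multiplicities; once this is done, all solutions must be real and $Z_{C}(F_{\varkappa})=0$ follows.

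To locate the real solutions I would use the global shape of $R$ on $\mathbb{R}$. By Theorem~\ref{Lemma.concave.R} the function $R$ is concave on every interval between consecutive poles $\mu_j$, with $R(x)\to-\infty$ as $x$ approaches a pole and $R(x)\to\tfrac{n-1}{n}$ as $x\to\pm\infty$. The proof of Theorem~\ref{Theorem.R.zeroes} already records that $R$ is strictly decreasing on $(-\infty,\mu_1)$, ranging over $(-\infty,\tfrac{n-1}{n})$, and strictly increasing on $(\mu_{d-1},+\infty)$ over the same range. Since $\varkappa\leqslant0<\tfrac{n-1}{n}$, the horizontal line at height $\varkappa$ meets each of these two unbounded branches in exactly one point, and by strict monotonicity that solution is simple.

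On each bounded interval $(\mu_j,\mu_{j+1})$, $j=1,\ldots,d-2$, the concave function $R$ tends to $-\infty$ at both endpoints, so it attains a maximum value $M_j$ at an interior point. The crucial step is to observe that $M_j\geqslant0$: indeed Theorem~\ref{Theorem.R.zeroes} guarantees that $R$ has two zeroes, counted with multiplicity, in $(\mu_j,\mu_{j+1})$, and for a concave arc running from $-\infty$ to $-\infty$ this is possible only if its maximum is nonnegative. Consequently, for $\varkappa\leqslant0\leqslant M_j$ the level $\varkappa$ is attained exactly twice (with multiplicity) on this arc. Summing the contributions gives $1+1+2(d-2)=2d-2$ real solutions, which accounts for all solutions of $R(z)=\varkappa$; hence $Z_{C}(F_{\varkappa})=0$. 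I expect the inequality $M_j\geqslant0$ to be the main point of substance, everything else being a monotonicity-and-concavity bookkeeping argument.

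Finally, for the simplicity statement when $\varkappa<0$ I would note that the two solutions on the unbounded branches are simple by strict monotonicity, while on each bounded interval the strict inequality $\varkappa<0\leqslant M_j$ forces the line at height $\varkappa$ to cross the strictly concave arc transversally at two \emph{distinct} interior points, yielding two simple solutions. Since $\varkappa<0$ implies $\varkappa\neq\tfrac{m-1}{m}$ for every $m=1,\ldots,n$, Theorem~\ref{Theorem.equations.equivalence} identifies these $2d-2$ solutions exactly with the non-trivial zeroes of $F_{\varkappa}[p](z)$, so all of them are real and simple, as claimed.
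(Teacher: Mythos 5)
Your proposal is correct and follows essentially the same route as the paper: both count the real solutions of $R(z)=\varkappa$ via Theorem~\ref{Theorem.R.zeroes} together with the monotonicity of $R$ on the unbounded intervals and its strict concavity on the bounded ones, and then invoke Theorem~\ref{Theorem.equations.equivalence} to conclude that all $2d-2$ solutions (hence all non-trivial zeroes of $F_{\varkappa}[p]$) are real, and simple when $\varkappa<0$. The only cosmetic difference is that you phrase the key step as $M_j\geqslant0$ for the maximum on each $(\mu_j,\mu_{j+1})$, while the paper uses the zeroes $\xi_1^{(k)}\leqslant\xi_2^{(k)}$ of $R$ as markers and treats $\varkappa=0$ and $\varkappa<0$ separately; these are the same observation.
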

\begin{proof}
The theorem asserts that all the non-trivial zeroes of polynomial $F_{\varkappa}[p]$ are real if $\varkappa\leqslant0$ and $p$
has only real zeroes. By de Gua's rule, a number is a non-trivial zero of the polynomial $F_{\varkappa}[p]$ if and only if
it is a solution of equation~\eqref{R.kappa.equation} provided $\varkappa\leqslant0$.

From Theorem~\ref{Theorem.R.zeroes},  the function $R$ has no non-real zeroes, and neither does the polynomial $F_{\varkappa}[p]$ for $\varkappa=0$ according to~Theorem~\ref{Theorem.equations.equivalence}. Consequently,
$$
Z_{C}(F_{0}[p])=0.
$$

Consider now $\varkappa<0$. By Theorem~\ref{Theorem.R.zeroes}, the function $R(z)$ has exactly one zero (counting multiplicities) in each of
the intervals $(-\infty,\mu_1)$ and $(\mu_{d-1},+\infty)$, and exactly two zeroes (counting multiplicities) in each of the intervals $(\mu_k,\mu_{k+1})$, $k=1,\ldots,d-2$.
Let us denote by $\xi_1^{(k)}$ and $\xi_2^{(k)}$, $\xi_1^{(k)}\leqslant\xi_2^{(k)}$, the zeroes of~$R$ on $(\mu_k,\mu_{k+1})$, $k=1,\ldots,d-2$.
And let $\xi_0$ and $\xi_{d-1}$ be the zeroes of~$R$ on the intervals $(-\infty,\mu_1)$ and $(\mu_{d-1},+\infty)$, respectively.
From~\eqref{function.R.simple.frac}--\eqref{Residues2.function.R} it follows that $R(x)$ monotonously decreases to $-\infty$ as $x\to\pm\mu_j$.
Therefore, for $\varkappa<0$, the equation $R(x)=\varkappa$ has exactly one solution, counting multiplicity,
in each interval $(\xi_0,\mu_1)$, $(\mu_{d-1},\xi_{d-1})$, $(\mu_k,\xi_1^{(k)})$, $(\xi_2^{(k)},\mu_{k+1})$, $k=1,\ldots,d-2$, and no solutions
on the intervals $(-\infty,\xi_0)$, $(\xi_{d-1},+\infty)$, and $(\xi_1^{(k)},\xi_2^{(k)})$ for  $k=2,\ldots,d-2$.
So it has exactly $2d-2$ real \textit{simple} solutions.

Thus,  for $\varkappa<0$ all solutions of the
equation $R(x)=\varkappa$ are real and simple. By Theorem~\ref{Theorem.equations.equivalence}, the polynomial $F_{\varkappa}[p]$
has no non-real zeroes for $\varkappa<0$, as required.
\end{proof}
In summary, if $\varkappa\leqslant0$, then all the non-trivial zeroes of the polynomial $F_{\varkappa}[p]$
are real while all of them are non-real for $\varkappa\geqslant\dfrac{n-1}{n}$ whenever the polynomial $p$ of degree $n$ has only real zeroes.
So the number of non-real zeroes of
$Q_{\varkappa}[p]$ must increase as~$\varkappa$ changes continuously from $0$ to $\dfrac{n-1}{n}$.

Additionally, we found out that the function $R$ has exactly one local maximum on each
interval $(\mu_k,\mu_{k+1})$, $k=1,\ldots,d-2$, and the values of these maxima are on the interval $\left[0,\dfrac{n-1}{n}\right)$. Thus, if~$\varkappa$ increases from $0$ to $\dfrac{n-1}{n}$ and becomes larger than some local maximum of $R$, then equation~\eqref{R.kappa.equation} loses a pair of real solutions.

\subsection{The case $0<\varkappa<\dfrac{n-1}{n}$.}

Now we are in a position to estimate the number of non-real zeroes of the polynomial $F_{\varkappa}[p]$ for $0<\varkappa<\dfrac{n-1}{n}$.

\vspace{1mm}

First, we prove the following simple auxiliary fact.
\begin{lemma}\label{Lemma.min.numb.sol.R}
Let $p$ be a real polynomial of degree $n\geqslant2$,  $d\geqslant2$.  Then
equation~\eqref{R.kappa.equation} has exactly one solution (counting multiplicities) on each of the intervals $(-\infty,\mu_1)$ and $(\mu_{d-1},+\infty)$, provided $p$ is real-rooted and $0<\varkappa<\dfrac{n-1}{n}$.
\end{lemma}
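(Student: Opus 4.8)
The plan is to read the claim directly off the monotone behaviour of $R$ on the two unbounded intervals, which was already worked out in the proof of Theorem~\ref{Theorem.R.zeroes}. First I would recall that on $(-\infty,\mu_1)$ one has $R'(x)<0$, while on $(\mu_{d-1},+\infty)$ one has $R'(x)>0$; thus $R$ is strictly monotone on each of these two intervals. This monotonicity is the only structural input needed, and it is already available from the simple-fraction form~\eqref{function.R.simple.frac}.

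Next I would pin down the boundary values. From~\eqref{function.R.simple.frac}--\eqref{Residues2.function.R} we have $R(x)\to\dfrac{n-1}{n}$ as $x\to\pm\infty$, whereas $R(x)\to-\infty$ as $x\to\mu_1^-$ and as $x\to\mu_{d-1}^+$, since the nearest pole contributes a term $\dfrac{\beta_j}{(x-\mu_j)^2}$ with $\beta_j<0$. Combining this with strict monotonicity, $R$ maps $(-\infty,\mu_1)$ strictly decreasingly onto $\left(-\infty,\dfrac{n-1}{n}\right)$, and maps $(\mu_{d-1},+\infty)$ strictly increasingly onto $\left(-\infty,\dfrac{n-1}{n}\right)$.

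Finally, since $0<\varkappa<\dfrac{n-1}{n}$, the value $\varkappa$ lies in the open interval $\left(-\infty,\dfrac{n-1}{n}\right)$, which is exactly the range attained on each unbounded branch. The intermediate value theorem, together with the strict monotonicity, then produces exactly one solution of equation~\eqref{R.kappa.equation} on each of $(-\infty,\mu_1)$ and $(\mu_{d-1},+\infty)$; this solution is simple (so ``one zero counting multiplicities'' is correct) because $R'$ does not vanish on either interval.

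There is no genuine obstacle here; the statement is a clean application of monotonicity plus the intermediate value theorem. The one point deserving care is that the asymptotic value $\dfrac{n-1}{n}$ is a strict limit and is never attained, so the strict inequality $\varkappa<\dfrac{n-1}{n}$ is precisely what guarantees that the horizontal line $y=\varkappa$ is actually crossed once on each branch, rather than merely approached asymptotically. I note in passing that the hypothesis $\varkappa>0$ plays no role in this particular count on the unbounded intervals; it only serves to fix the regime $0<\varkappa<\dfrac{n-1}{n}$ in which the lemma will subsequently be used.
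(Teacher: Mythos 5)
Your proof is correct and follows essentially the same route as the paper's: the paper likewise invokes the monotone behaviour of $R$ established in the proof of Theorem~\ref{Theorem.R.zeroes} (decreasing from $\tfrac{n-1}{n}$ to $-\infty$ on $(-\infty,\mu_1)$, increasing from $-\infty$ to $\tfrac{n-1}{n}$ on $(\mu_{d-1},+\infty)$) and concludes by the intermediate value argument. Your additional observations --- that the solutions are simple because $R'$ does not vanish there, and that only the upper bound $\varkappa<\tfrac{n-1}{n}$ is actually needed --- are accurate refinements of the same argument.
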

\begin{proof}
In the proof of Theorem~\ref{Theorem.R.zeroes}, we established that the function $R$ decreases from $\dfrac{n-1}{n}$ to $-\infty$ on $(-\infty,\mu_1)$, and
increases from $-\infty$ to $\dfrac{n-1}{n}$ in $(\mu_{d-1},+\infty)$ whenever $p$ has only real zeroes. So the equation $R(x)=\varkappa$
has exactly one solution, counting multiplicities, on each interval $(-\infty,\mu_1)$ and $(\mu_{d-1},+\infty)$ for any
$0<\varkappa<\dfrac{n-1}{n}$.
\end{proof}
Let us now consider the case $d=2$. Recall that by $d$ we denote the number of distinct zeroes of $p$, see~\eqref{Lemma.concave.R.proof.1}.
\begin{theorem}\label{Theorem.2.distinct.zeroes}
Let the polynomial $p$ have two distinct zeroes, and let all zeroes of $p$ be real. Then
\begin{equation*}
Z_{C}(F_{\varkappa}[p])=0,
\end{equation*}
whenever $0<\varkappa<\dfrac{n-1}{n}$.
\end{theorem}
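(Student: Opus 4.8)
The plan is to exploit the fact that for $d=2$ the rational function $R$ has a single pole, which leaves essentially no room for non-real solutions. First I would write $p(z)=a_0(z-\lambda_1)^{n_1}(z-\lambda_2)^{n_2}$ with $n_1+n_2=n$ and $\lambda_1<\lambda_2$; by de Gua's rule $p'$ has a unique simple zero $\mu_1\in(\lambda_1,\lambda_2)$, and Theorem~\ref{Lemma.concave.R} then gives the explicit form
\begin{equation*}
R(z)=\dfrac{n-1}{n}+\dfrac{\beta_1}{(z-\mu_1)^2},\qquad \beta_1=\dfrac{p(\mu_1)}{p''(\mu_1)}<0,
\end{equation*}
so that $R$ has exactly one pole, located at $z=\mu_1$.

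Next, since $0<\varkappa<\tfrac{n-1}{n}$ in particular means $\varkappa\neq\tfrac{n-1}{n}$, Theorem~\ref{Theorem.equations.equivalence} guarantees that equation~\eqref{R.kappa.equation} has exactly $2d-2=2$ solutions, and its non-real solutions coincide with the non-real zeroes of $F_{\varkappa}[p](z)$. It therefore suffices to show that both solutions are real. The key bookkeeping observation is that for $d=2$ one has $\mu_{d-1}=\mu_1$, so the two intervals $(-\infty,\mu_1)$ and $(\mu_1,+\infty)$ appearing in Lemma~\ref{Lemma.min.numb.sol.R} together exhaust $\mathbb{R}\setminus\{\mu_1\}$. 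That lemma supplies exactly one real root, counting multiplicity, in each of them, hence at least two real roots. Since the total number of solutions is exactly two, both solutions are real, and consequently $Z_{C}(F_{\varkappa})=0$ by Theorem~\ref{Theorem.equations.equivalence}.

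As a direct sanity check one may also solve the equation by hand: $R(z)=\varkappa$ is equivalent to $(z-\mu_1)^2=\beta_1/\bigl(\varkappa-\tfrac{n-1}{n}\bigr)$, and for $0<\varkappa<\tfrac{n-1}{n}$ both numerator and denominator are negative, so the right-hand side is positive and the two roots $\mu_1\pm\sqrt{\beta_1/(\varkappa-\tfrac{n-1}{n})}$ are real. There is no genuinely hard step in this argument; the only point deserving care is recording the coincidence $\mu_{d-1}=\mu_1$ in the degenerate case $d=2$, after which Lemma~\ref{Lemma.min.numb.sol.R} together with the count from Theorem~\ref{Theorem.equations.equivalence} closes the argument immediately.
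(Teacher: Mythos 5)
Your proof is correct and follows essentially the same route as the paper's: Theorem~\ref{Theorem.equations.equivalence} gives exactly $2d-2=2$ solutions of $R(z)=\varkappa$ whose non-real ones are precisely the non-real zeroes of $F_{\varkappa}[p](z)$, and Lemma~\ref{Lemma.min.numb.sol.R} supplies two real solutions, so none can be non-real. Your closing explicit computation of the roots $\mu_1\pm\sqrt{\beta_1/\bigl(\varkappa-\tfrac{n-1}{n}\bigr)}$ is a pleasant self-contained extra (it would even stand alone as a proof), but the core argument coincides with the paper's.
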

\begin{proof}
According to Theorem~\ref{Theorem.equations.equivalence}, all the non-real zeroes of the polynomial $F_{\varkappa}[p]$ are solutions to equation~\eqref{R.kappa.equation}. By the same theorem,
 this equation has exactly $2d-2$ solutions if $\varkappa\neq\dfrac{n-1}n$. Since $d=2$, we obtain that equation~\eqref{R.kappa.equation} has exactly $2$ solutions. At the same time, Lemma~\ref{Lemma.min.numb.sol.R} guaranties that equation~\eqref{R.kappa.equation} has at least $2$ real solutions. Consequently, if $d=2$, equation~\eqref{R.kappa.equation} has no non-real solutions. Therefore, $F_{\varkappa}[p]$ has no non-real zeroes in this case.
\end{proof}
Thus, the case $d=2$ is completely covered by Theorems~\ref{Theorem.number.complex.zeroes.n-1.n}, \ref{Theorem.real.zeroes.nonpositive.kappa}, and~\ref{Theorem.2.distinct.zeroes}, and we deal with the case $d\geqslant3$ in the rest of the present section.


\vspace{3mm}

Let again the polynomial $p$ and its derivative be given by~\eqref{Lemma.concave.R.proof.1}--\eqref{Zeroes.ineq}. We will distinguish the following two cases.
\begin{itemize}\label{cases}
\item[1)]\label{cases.1} $d\geqslant4$, and among the zeroes $\lambda_2,\ldots,\lambda_{d-1}$ we have $d_j$ zeroes of multiplicity $m_j$, $j=1,\ldots,r$, $r\geqslant2$, such that
\begin{equation*}
m_1<m_2<\cdots<m_r,
\end{equation*}
where $m_1=\min\{n_2,\ldots,n_{d-1}\}$ and $m_r=\max\{n_2,\ldots,n_{d-1}\}$, and
\begin{equation*}
\sum_{j=1}^{r}d_j=d-2,\qquad \sum_{j=1}^{r}m_j=n-n_1-n_2,
\end{equation*}
\item[2)]\label{cases.2} $d\geqslant3$, and all the zeroes $\lambda_2,\ldots,\lambda_{d-1}$ are of multiplicity $m=\max\{n_2,\ldots,n_{d-1}\}=\min\{n_2,\ldots,n_{d-1}\}$.
\end{itemize}
The last case can be treated as the case when $m=m_1=m_r$, or as the case of $r=1$.

\vspace{3mm}

For the case $1)$ we have the following the upper bound of the number  of non-real roots of the polynomial~$F_{\varkappa}[p]$
for $0<\varkappa<\dfrac{n-1}{n}$.
\begin{theorem}\label{Theorem.upper.bound.1}
Let $p$ and $p'$ be defined as in~\eqref{Lemma.concave.R.proof.1}--\eqref{Zeroes.ineq}. Let also $d\geqslant4$, and suppose that
among the zeroes $\lambda_2,\ldots,\lambda_{d-1}$ we have $d_j$ zeroes of multiplicity $m_j$, $j=1,\ldots,r$, $r\geqslant2$, such that
\begin{equation}\label{order.multiplicities.1}
m_1<m_2<\cdots<m_r,
\end{equation}
where $m_1=\min\{n_2,\ldots,n_{d-1}\}$ and $m_r=\max\{n_2,\ldots,n_{d-1}\}$, and
\begin{equation}\label{formulas.miltiplicities.1}
\sum_{j=1}^{r}d_j=d-2,\qquad \sum_{j=1}^{r}m_j=n-n_1-n_2.
\end{equation}
Then the following inequalities hold:
\begin{itemize}
\item[] If $0<\varkappa\leqslant\dfrac{m_1-1}{m_1}$, then
\begin{equation}\label{Theorem.upper.bound.1.estimates.0}
Z_{C}(F_{\varkappa}[p])=0;
\end{equation}
\item[] If $\dfrac{m_j-1}{m_j}<\varkappa\leqslant\dfrac{m_{j+1}-1}{m_{j+1}}$, $j=1,\ldots,r-1$, then
\begin{equation}\label{Theorem.upper.bound.1.estimates.1}
Z_{C}(F_{\varkappa}[p])\leqslant2\sum_{i=1}^{j}d_i;
\end{equation}
\item[] If $\dfrac{m_r-1}{m_r}<\varkappa<\dfrac{n-1}{n}$, then
\begin{equation}\label{Theorem.upper.bound.1.estimates.2}
Z_{C}(F_{\varkappa}[p])\leqslant2d-4.
\end{equation}
\end{itemize}
\end{theorem}
\begin{proof}
By Theorem~\ref{Theorem.equations.equivalence}, equation~\eqref{R.kappa.equation} has
exactly $2d-2$ solutions, and the set all the non-real solutions to this equation coincides
with the set of all non-real zeroes of the polynomials $F_{\varkappa}[p]$ whose number
of non-trivial zeroes is at most $2d-2$ according to Theorem~\ref{Theorem.total.number.of.zeroes.F.general} and
Corollary~\ref{Corol.total.number.of.zeroes.F.general}. Since equation~\eqref{R.kappa.equation}
has at least $2$ real solutions by Lemma~\ref{Lemma.min.numb.sol.R}, we have
\begin{equation}\label{Theorem.upper.bound.1.estimates.00}
Z_{C}(F_{\varkappa}[p])\leqslant2d-4\qquad\text{for}\quad\varkappa<\dfrac{n-1}{n}.
\end{equation}
However, this inequality can be improved  for some values of $\varkappa$.

Indeed, from~\eqref{R.at.multiple.zero} it follows that if $\lambda$ is a zero of the
polynomial $p$ of multiplicity $m_i\geqslant2$, then  for any $\varkappa\leqslant\dfrac{m_i-1}{m_i}$,
the equation $R(x)=\varkappa$ has exactly two solutions (counting multiplicities),
on the interval $(\mu_{k-1},\mu_k)$ containing~$\lambda$, since $R$ is concave
between its poles by Theorem~\ref{Lemma.concave.R}. So if $\varkappa\leqslant\dfrac{m_i-1}{m_i}$
for certain $m_i$, $i=2,\ldots,r$, defined in~\eqref{order.multiplicities.1}, then
the equation $R(z)=\varkappa$ has \textit{at least}
\begin{equation*}
2+2\sum\limits_{j=i}^{r}d_j
\end{equation*}
real solutions (counting multiplicities). Therefore, equation~\eqref{R.kappa.equation}
has \textit{at most}
\begin{equation*}
2d-2-2-2\sum\limits_{j=i}^{r}d_j=2\sum\limits_{j=1}^{i-1}d_j
\end{equation*}
non-real solutions by~\eqref{formulas.miltiplicities.1}. So inequalities~\eqref{Theorem.upper.bound.1.estimates.1}
are true, since the set of non-real solution of equation~\eqref{R.kappa.equation} coincides
with the set of non-real zeroes of the polynomial~$F_{\varkappa}[p]$ according to
Theorem~\ref{Theorem.equations.equivalence}.

If $0<\varkappa\leqslant\dfrac{m_1-1}{m_1}$, then the equation $R(z)=\varkappa$
has exactly two zeroes in every interval $(\mu_{k-1},\mu_k)$, $k=1,\ldots,d-1$,
due to concavity of $R$ and by~\eqref{R.at.multiple.zero}. Consequently,
all the solutions of this equation are real, so the identity~\eqref{Theorem.upper.bound.1.estimates.0} is true.
\end{proof}
In the same way, one can prove the corresponding result for the case $2)$.
\begin{theorem}\label{Theorem.upper.bound.2}
Let $p$ and $p'$ be defined as in~\eqref{Lemma.concave.R.proof.1}--\eqref{Zeroes.ineq}. Let also $d\geqslant3$, and suppose that
all the zeroes $\lambda_2,\ldots,\lambda_{d-1}$ are of multiplicity $m=\max\{n_2,\ldots,n_{d-1}\}=\min\{n_2,\ldots,n_{d-1}\}$.

Then the following holds:
\begin{itemize}
\item[] If $0<\varkappa\leqslant\dfrac{m-1}{m}$, then
\begin{equation*}\label{Theorem.upper.bound.2.estimates.0}
Z_{C}(F_{\varkappa})=0;
\end{equation*}
\item[] If $\dfrac{m-1}{m}<\varkappa<\dfrac{n-1}{n}$, then
\begin{equation*}\label{Theorem.upper.bound.2.estimates.2}
Z_{C}(F_{\varkappa})\leqslant2d-4.
\end{equation*}
\end{itemize}
\end{theorem}

Thus, the upper bound for the number of non-real zeroes of the polynomial~$F_{\varkappa}[p]$
is established for any~$\varkappa\in\mathbb{R}$.

\vspace{3mm}

In what follows, we find the lower bound for the number of non-real zeroes of the polynomial~$F_{\varkappa}[p]$. To do this, we
estimate from above the number of real zeroes of the auxiliary rational function $Q_{\varkappa}[p]$ defined in~\eqref{main.function.Q}.
As we mentioned above, the set of zeroes of $Q_{\varkappa}[p]$ coincides with the set of all non-trivial zeroes of $F_{\varkappa}[p]$.

Together with $Q_{\varkappa}[p]$, let us consider the function
\begin{equation}\label{Q.hat}
\widehat{Q}_{\varkappa}[p](z)=Q_{2-\tfrac1{\varkappa}}[p'](z)=\dfrac{p'(z)p'''(z)-\left(2-\frac1{\varkappa}\right)[p''(z)]^2}{[p'(z)]^2}.
\end{equation}

Relation between the number of zeroes of the functions $Q_{\varkappa}[p]$ and $\widehat{Q}_{\varkappa}[p]$ on an interval is provided by the following proposition.
\begin{prepos}\label{lemma.55}
Let $p$ be a real polynomial, $\varkappa>0$, and $a,b\in\mathbb{R}$.
If
$p(z)\neq0$, $p'(z)\neq0$, and $p''(z)\neq0$ for
$z\in(a,b)$, then
\begin{equation}\label{main.work.formula.11}
Z_{(a,b)}(Q_{\varkappa}[p])\leqslant1+Z_{(a,b)}\left(\widehat{Q}_{\varkappa}[p]\right).
\end{equation}
\end{prepos}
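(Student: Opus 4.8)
The plan is to relate the zeros of $Q_{\varkappa}[p]$ on $(a,b)$ to those of $\widehat{Q}_{\varkappa}[p]$ via a logarithmic-derivative / Rolle-type argument. The key observation is that $Q_{\varkappa}[p](z) = \frac{p p'' - \varkappa (p')^2}{p^2}$, and under the hypotheses $p, p', p'' \neq 0$ on $(a,b)$, the quantity $R(z) = \frac{p p''}{(p')^2}$ is a well-defined smooth function there, and the zeros of $Q_\varkappa[p]$ coincide with the solutions of $R(z) = \varkappa$. The point of introducing $\widehat{Q}_\varkappa[p]$ is that it plays the analogous role for $p'$: its zeros are the solutions of $\widehat{R}(z) := \frac{p' p'''}{(p'')^2} = 2 - \frac{1}{\varkappa}$.

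First I would express $Q_\varkappa[p]$ in terms of $R$ and compute the logarithmic derivative of a suitable auxiliary function whose sign changes are controlled by $R - \varkappa$. The natural candidate is $g(z) = \frac{p(z)}{p'(z)}$, since $R(z) = 1 - g'(z)$ (as already used in the proof of Theorem~\ref{Lemma.concave.R}). Thus $R(z) = \varkappa$ is equivalent to $g'(z) = 1 - \varkappa$. Then between any two consecutive zeros of $Q_\varkappa[p]$ on $(a,b)$ — i.e. two consecutive points where $g'(z) = 1-\varkappa$ — Rolle's theorem applied to $g'(z) - (1-\varkappa)$ produces a zero of $g''(z)$. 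The hard part will be translating a zero of $g''$ into a zero of $\widehat{Q}_\varkappa[p]$: I would compute $g''$ explicitly and show that, under the nonvanishing hypotheses, $g''(z) = 0$ forces $\widehat{Q}_\varkappa[p](z) = 0$, or more precisely that each such Rolle point lies among the zeros counted by $Z_{(a,b)}(\widehat{Q}_\varkappa)$. This is where the specific shift $2 - \frac{1}{\varkappa}$ must emerge naturally from the algebra, and verifying the exact correspondence is the main obstacle.

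Concretely, I would set $h(z) = g'(z) - (1-\varkappa) = \varkappa - R(z)$, so that the zeros of $h$ on $(a,b)$ are exactly the points counted by $Z_{(a,b)}(Q_\varkappa)$ (multiplicities matching, since $p^2 h = -F_\varkappa[p]$ up to the nonvanishing factor $p^2$). If $h$ has $N$ zeros on $(a,b)$ counting multiplicity, then by Rolle's theorem $h'$ has at least $N-1$ zeros on $(a,b)$. It therefore suffices to show $h'(z) = 0$ implies $z$ is a zero of $\widehat{Q}_\varkappa[p]$, giving $N - 1 \leqslant Z_{(a,b)}(\widehat{Q}_\varkappa)$, which is exactly~\eqref{main.work.formula.11}. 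The computation of $h' = -R'$ in terms of $p, p', p'', p'''$ should, after clearing denominators and using $p'' \neq 0$, reduce the condition $R'(z) = 0$ to a multiple of $p'(z)p'''(z) - \left(2 - \frac{1}{\varkappa}\right)[p''(z)]^2$, i.e. to $\widehat{Q}_\varkappa[p](z) = 0$; the factor $\varkappa > 0$ guarantees the shift is well-defined and no spurious sign is introduced.

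The delicate points I expect to handle carefully are the multiplicity bookkeeping in Rolle's theorem (ensuring a zero of $h$ of multiplicity $\mu$ contributes $\mu - 1$ to the count for $h'$, consistent with the standard Rolle argument for real-analytic functions), and the verification via direct differentiation that $R'(z)=0$ is equivalent to the vanishing of the numerator of $\widehat{Q}_\varkappa[p]$ on the region where $p, p', p''$ are all nonzero. Since all three of $p, p', p''$ are assumed nonzero on $(a,b)$, every denominator involved is finite and nonzero, so no poles of $R$ or of $\widehat{Q}_\varkappa[p]$ intervene and the Rolle count is clean. Assembling these yields $Z_{(a,b)}(Q_\varkappa) - 1 \leqslant Z_{(a,b)}(\widehat{Q}_\varkappa)$, which is the claimed inequality.
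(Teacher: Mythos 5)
Your argument breaks at its central step: the claim that a zero of $h'=-R'$ must be a zero of $\widehat{Q}_{\varkappa}[p]$ is false. Differentiating $R=pp''/(p')^2$ gives, wherever $p'\neq0$,
\begin{equation*}
R'(z)=\dfrac{[p'(z)]^2p''(z)+p(z)p'(z)p'''(z)-2p(z)[p''(z)]^2}{[p'(z)]^3},
\end{equation*}
so the condition $R'(z)=0$ does not involve $\varkappa$ at all, whereas the zero set of $\widehat{Q}_{\varkappa}[p]$, i.e.\ the solution set of $p'p'''=\left(2-\tfrac1{\varkappa}\right)(p'')^2$, genuinely moves with $\varkappa$: changing $\varkappa$ changes the equation by a nonzero multiple of $(p'')^2\neq0$ on $(a,b)$. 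Hence the two conditions cannot coincide, and no amount of clearing denominators will make the shift $2-\tfrac1{\varkappa}$ ``emerge from the algebra.'' What is true is weaker: $R'(\zeta)=0$ \emph{together with} $R(\zeta)=\varkappa$ forces $\widehat{Q}_{\varkappa}[p](\zeta)=0$ --- this is exactly the paper's identity \eqref{new}, $\varkappa p'\widehat{F}_{\varkappa}=p''F_{\varkappa}'-p'''F_{\varkappa}$, evaluated at a point where $F_{\varkappa}=F_{\varkappa}'=0$. So the multiplicity part of your bookkeeping (a zero of $h$ of multiplicity $\mu\geqslant2$ yields a zero of $\widehat{Q}_{\varkappa}$) is fine, but the Rolle points lying \emph{strictly between} two consecutive distinct zeros of $h$ satisfy only $R'=0$, not $R=\varkappa$, and there $\widehat{Q}_{\varkappa}$ need not vanish. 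A concrete instance: for $p(z)=z(z-1)(z+2)$ the function $R$ has a critical point $z^*\approx-0.158$ inside the interval $\left(-\tfrac13,0\right)$, where $p,p',p''\neq0$, yet $\widehat{F}_{\varkappa}(z^*)=p'(z^*)p'''(z^*)-\left(2-\tfrac1{\varkappa}\right)[p''(z^*)]^2\neq0$ for every $\varkappa$ except one particular value (for $\varkappa=\tfrac12$ one gets $\widehat{F}_{1/2}(z^*)=6\,p'(z^*)\approx-13.4$).

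The gap is repairable, and the repair shows your instinct (a Rolle count) was sound but aimed at the wrong auxiliary function: one must apply Rolle not to $\varkappa-R=-F_{\varkappa}/(p')^2$ but to $F_{\varkappa}/p''$. Indeed, the paper's identity \eqref{derivatives} with $j=0$ reads $\varkappa\,\dfrac{p'}{(p'')^2}\,\widehat{F}_{\varkappa}=\left(\dfrac{F_{\varkappa}}{p''}\right)'$. Since $p\neq0$, $p'\neq0$, $p''\neq0$ on $(a,b)$ and $\varkappa>0$, the zeroes of $F_{\varkappa}/p''$ in $(a,b)$ are exactly those of $Q_{\varkappa}$ (same multiplicities), and the zeroes of $(F_{\varkappa}/p'')'$ are exactly those of $\widehat{Q}_{\varkappa}$ (same multiplicities); the standard Rolle count for real-analytic functions ($N$ zeroes of $g$ force at least $N-1$ zeroes of $g'$, counting multiplicities) then gives \eqref{main.work.formula.11} immediately. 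This route is genuinely shorter than the paper's own proof, which instead runs the sign analysis of Lemmas~\ref{lemma.3} and~\ref{lemma.4}; note, however, that those lemmas also provide endpoint refinements (e.g.\ non-vanishing of $Q_{\varkappa}$ at $b$, and the location of the at-most-one extra zero) that a bare Rolle count does not, and which the paper exploits later, for instance in Lemma~\ref{Lemma.main.mids}.
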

For the case $\varkappa=1$, this fact was proved in~\cite[Lemma~2.5]{Tyaglov_Hawaii}. The proof of Proposition~\ref{lemma.55} is the
same as the proof of Lemma~2.5 in~\cite{Tyaglov_Hawaii}, so we skip the proof here but provide it in Appendix for completeness
(see Theorem~\ref{lemma.5}).

If $p$ has only real zeroes, then the following consequence
of inequality~\eqref{main.work.formula.11} is true.
\begin{theorem}\label{Theorem.ineq.Q.Q.hat.greater.1/2}
Let $p$ be a real polynomial of degree $n$ with real zeroes, and $d\geqslant3$. Then
\begin{equation}\label{auxiliary.main.ineq}
Z_{\mathbb{R}}(Q_{\varkappa}[p])\leqslant Z_{\mathbb{R}}\left(\widehat{Q}_{\varkappa}[p]\right),
\end{equation}
for any $\dfrac12<\varkappa<\dfrac{n-1}{n}$.
\end{theorem}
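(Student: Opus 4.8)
The plan is to introduce $\varkappa':=2-\tfrac1\varkappa$, so that $\widehat{Q}_\varkappa=Q_{\varkappa'}[p']$ and the hypothesis $\tfrac12<\varkappa<\tfrac{n-1}{n}$ becomes exactly $0<\varkappa'<\tfrac{n-2}{n-1}$, the analogous range for the degree $n-1$ polynomial $p'$. I partition $\mathbb{R}$ by the poles $\mu_1<\dots<\mu_{d-1}$ of $R$ (the zeroes of $p'$ that are not zeroes of $p$) and compare the two functions piece by piece. By Theorem~\ref{Theorem.equations.equivalence} the real zeroes of $Q_\varkappa$ are the real solutions of $R(z)=\varkappa$, and by the concavity of $R$ between its poles (Theorem~\ref{Lemma.concave.R}), together with Theorem~\ref{Theorem.R.zeroes} and Lemma~\ref{Lemma.min.numb.sol.R}, these number exactly one on each unbounded piece $(-\infty,\mu_1)$, $(\mu_{d-1},+\infty)$ and at most two on each bounded piece $(\mu_j,\mu_{j+1})$, the latter occurring precisely when the single local maximum of $R$ there exceeds $\varkappa$. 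Since $Q_\varkappa(\mu_j)=p(\mu_j)p''(\mu_j)/p(\mu_j)^2<0$ by de Gua's rule, while the $\mu_j$ are poles of $\widehat{Q}_\varkappa$, it suffices to prove $Z_{(\mu_j,\mu_{j+1})}(Q_\varkappa)\leqslant Z_{(\mu_j,\mu_{j+1})}(\widehat{Q}_\varkappa)$ on each bounded piece and the matching inequality on the two unbounded pieces, and then to sum.

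The two ingredients are as follows. First, $\varkappa'>0$ (i.e.\ $\varkappa>\tfrac12$) dictates the boundary behaviour of $\widehat{Q}_\varkappa$ at the zeroes of $p'$: at a simple zero $\xi$ of $p'$ the numerator $p'p'''-\varkappa'(p'')^2$ tends to $-\varkappa'[p''(\xi)]^2<0$ while $[p']^2\to0^+$, so $\widehat{Q}_\varkappa(z)\to-\infty$; this holds at every $\mu_j$ and at every double zero of $p$. At a zero $\lambda_k$ of $p$ of multiplicity $n_k\geqslant2$ a short Taylor computation gives
\[
\widehat{Q}_\varkappa(z)\sim(n_k-1)^2\left(\dfrac1\varkappa-\dfrac{n_k}{n_k-1}\right)(z-\lambda_k)^{-2}
\qquad\text{as}\quad z\to\lambda_k,
\]
so $\widehat{Q}_\varkappa\to+\infty$ precisely when $\varkappa<\tfrac{n_k-1}{n_k}$ and $\widehat{Q}_\varkappa\to-\infty$ when $\varkappa>\tfrac{n_k-1}{n_k}$. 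Second, at any real zero $z^\ast$ of $Q_\varkappa$, writing $w:=p/p'$, $w_1:=p'/p''$ and using $R=1-w'$, one computes directly that
\[
\widehat{Q}_\varkappa(z^\ast)=-\dfrac{w(z^\ast)\,w''(z^\ast)}{\varkappa^2\,w_1(z^\ast)^2},
\qquad\text{so}\qquad
\sgn\widehat{Q}_\varkappa(z^\ast)=\sgn\!\big(w(z^\ast)\,R'(z^\ast)\big),
\]
since $w''=-R'$. This pointwise identity is the infinitesimal content behind Proposition~\ref{lemma.55}.

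Now I assemble the pieces. On each unbounded piece $\widehat{Q}_\varkappa\to0^+$ at $\pm\infty$ (because $R_1\to\tfrac{n-2}{n-1}>\varkappa'$ and $\widehat{Q}_\varkappa=(R_1-\varkappa')(p''/p')^2$) and $\to-\infty$ at the adjacent $\mu$, hence has at least one zero there, matching the single zero of $Q_\varkappa$. On a bounded piece carrying two zeroes $z_1<z_2$ of $Q_\varkappa$ there are two configurations. If $\lambda_{j+1}$ is a zero of $p$ with $\tfrac{n_{j+1}-1}{n_{j+1}}>\varkappa$, then $\lambda_{j+1}$ lies between $z_1$ and $z_2$ and is a $+\infty$ pole of $\widehat{Q}_\varkappa$; flanked by the two $-\infty$ poles $\mu_j,\mu_{j+1}$, it already forces two zeroes of $\widehat{Q}_\varkappa$. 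Otherwise $R(\lambda_{j+1})\leqslant\varkappa$, so $z_1,z_2$ lie to one side of $\lambda_{j+1}$ and $(z_1,z_2)$ is free of zeroes of $pp'p''$; there $w$ keeps a constant sign, and since $R'(z_1)>0>R'(z_2)$ the sign identity gives $\widehat{Q}_\varkappa(z_1)\widehat{Q}_\varkappa(z_2)<0$, so one value is positive and, trapped between the two $-\infty$ boundary values of the surrounding pole-free sub-interval, again forces two zeroes. In every case $Z_{(\mu_j,\mu_{j+1})}(Q_\varkappa)\leqslant Z_{(\mu_j,\mu_{j+1})}(\widehat{Q}_\varkappa)$, and summing over all pieces (neither function has a zero at a $\mu_j$) yields $Z_{\mathbb{R}}(Q_\varkappa)\leqslant Z_{\mathbb{R}}(\widehat{Q}_\varkappa)$.

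The hard part is the sign-change bookkeeping on the active bounded pieces: one must verify, across all configurations of the multiplicity $n_{j+1}$ and of $\varkappa$ relative to $\tfrac{n_{j+1}-1}{n_{j+1}}$, that the positive value—supplied either by a $+\infty$ pole or by the sign identity—is always caught between two $-\infty$ boundary values, which rests on the fact that $z_1,z_2$ fall on one side of any interior pole $\lambda_{j+1}$ of $\widehat{Q}_\varkappa$ whenever $R(\lambda_{j+1})\leqslant\varkappa$. A separate check is needed for the tangential borderline, where $R$ touches the level $\varkappa$ (a double zero of $Q_\varkappa$): then $R'(z^\ast)=0$, so the identity gives $\widehat{Q}_\varkappa(z^\ast)=0$, and one must confirm the multiplicities still match.
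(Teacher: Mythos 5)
Your route is genuinely different from the paper's: instead of iterating Proposition~\ref{lemma.55} with the parity-and-concavity bookkeeping of Lemmata~\ref{Lemma.main.ends}--\ref{Lemma.main.mids}, you drive everything by the exact pointwise identity $\sgn\widehat{Q}_{\varkappa}(z^\ast)=\sgn\bigl(w(z^\ast)R'(z^\ast)\bigr)$ at zeroes of $Q_{\varkappa}$ (which is correct --- it is the paper's formula~\eqref{main.work.formula} combined with $F_{\varkappa}'=(p')^2R'$ at a zero of $F_{\varkappa}$) together with intermediate-value arguments based on the boundary behaviour of $\widehat{Q}_{\varkappa}$ at zeroes of $p'$. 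For generic $\varkappa$ this works: your two configurations on a bounded piece and your treatment of the unbounded pieces are sound, and the mechanism is arguably cleaner than the paper's.

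The genuine gap is at the borderline values $\varkappa=\tfrac{m-1}{m}$, where $m=n_{k+1}\geqslant3$ is the multiplicity of an interior zero $\lambda_{k+1}$; these values lie inside $\bigl(\tfrac12,\tfrac{n-1}{n}\bigr)$, so the theorem must cover them, and they defeat both of your ingredients simultaneously. First, your asymptotic for $\widehat{Q}_{\varkappa}$ at $\lambda_{k+1}$ has vanishing leading coefficient exactly there, so the pole behaviour of $\widehat{Q}_{\varkappa}$ at $\lambda_{k+1}$ is undetermined by anything you state: it is in fact a \emph{simple} pole whose sign is governed by the next Taylor coefficient $B$ of $p$ at $\lambda_{k+1}$. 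Second, precisely at these $\varkappa$ the piece $(\mu_k,\mu_{k+1})$ can carry exactly \emph{one} zero of $Q_{\varkappa}$: by Theorem~\ref{Theorem.equations.equivalence}, $\lambda_{k+1}$ solves $R=\varkappa$ but is not a zero of $Q_{\varkappa}$, so when $R=\varkappa$ has two distinct solutions only one survives. Neither of your configurations applies (both presuppose two zeroes $z_1<z_2$), and the sign identity at the surviving zero gives no help: for instance if $z_1<\lambda_{k+1}$ then $w<0$ and $R'(z_1)>0$ on that side, so $\widehat{Q}_{\varkappa}(z_1)<0$, and the required zero of $\widehat{Q}_{\varkappa}$ can only be produced by knowing the sign of the pole at $\lambda_{k+1}$ --- exactly the information you lack. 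This is the point where the paper deploys the matched expansions~\eqref{R.at.multiple.zero.expansion.2}--\eqref{R1.at.multiple.zero.expansion} to show that $\lambda_{k+1}$ solves both equations~\eqref{Theorem.ineq.Q.Q.hat.greater.1/2.proof.0} with equal multiplicity, so the loss on the $Q$-side is matched on the $\widehat{Q}$-side. By contrast, the tangency case you flag at the end is closable with your own tools: at a double zero $z^\ast$ of $Q_{\varkappa}$ any interior pole of $\widehat{Q}_{\varkappa}$ must be a $-\infty$ pole, since then $R(\lambda_{k+1})<\varkappa=\max R$, and a simple zero of $\widehat{Q}_{\varkappa}$ at $z^\ast$ changes sign, so the surrounding $-\infty$ boundary values force a second zero, while a multiple zero already counts twice. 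But as written, both that check and, more seriously, the borderline case $\varkappa=\tfrac{m-1}{m}$ are missing, so the proof is incomplete.
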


\noindent We split the proof of this theorem into a few lemmas.

Let the polynomial $p$ and its derivative be defined in~\eqref{Lemma.concave.R.proof.1}--\eqref{Lemma.concave.R.proof.2}.
We fix the order of the zeroes indexing as in~\eqref{Zeroes.ineq}:
\begin{equation*}\label{Zeroes.ineq.1}
\lambda_1<\mu_1<\lambda_2<\cdots<\lambda_{d-1}<\mu_{d-1}<\lambda_d,
\end{equation*}

Consider the following auxiliary functions
\begin{equation}\label{R.R.functions}
R[p](z)\eqdef \dfrac{p(z)p''(z)}{[p'(z)]^2},\quad\qquad R[p'](z)\eqdef \dfrac{p'(z)p'''(z)}{[p''(z)]^2}.
\end{equation}
By Theorem~\ref{Theorem.equations.equivalence}, the sets of non-real zeroes of the functions $Q_{\varkappa}[p]$ and $\widehat{Q}_{\varkappa}[p]$ coincide with the sets of non-real solutions
of the equations
\begin{equation}\label{Theorem.ineq.Q.Q.hat.greater.1/2.proof.0}
R[p](z)=\varkappa,\qquad\text{and}\qquad R[p'](z)=2-\dfrac1{\varkappa},
\end{equation}
respectively, since the sets of all zeroes of $Q_{\varkappa}[p]$ and $\widehat{Q}_{\varkappa}[p]$ coincide the sets of all non-trivial zeroes of the polynomials  $F_{\varkappa}[p]$ and  $F_{2-\tfrac1{\varkappa}}[p']$ , respectively.

\vspace{1mm}

If $\lambda$ is a (real) zero of the polynomial $p$ of multiplicity $m\geqslant2$, then it is a zero of $p'$ of multiplicity
$m-1$. Moreover, if $\varkappa=\dfrac{m-1}{m}$, then
$2-\dfrac1{\varkappa}=\dfrac{m-2}{m-1}$. Thus, if $\varkappa\neq\dfrac{m-1}{m}$, then the sets of zeroes of the functions $Q_{\varkappa}[p]$ and $\widehat{Q}_{\varkappa}[p]$ coincide with the sets of all solutions of the
equations~\eqref{Theorem.ineq.Q.Q.hat.greater.1/2.proof.0}, respectively.
\begin{lemma}\label{Lemma.main.ends}
Let $p$ and $p'$ be defined as in~\eqref{Lemma.concave.R.proof.1}--\eqref{Zeroes.ineq}. For the intervals $(-\infty,\mu_1]$ and $[\mu_{d-1},+\infty)$, the following inequalities hold
\begin{equation}\label{auxiliary.main.ineq.ends}
Z_{(-\infty,\mu_1]}(Q_{\varkappa}[p])\leqslant Z_{(-\infty,\mu_1]}\left(\widehat{Q}_{\varkappa}[p]\right)
\quad\text{and}\quad Z_{[\mu_{d-1},+\infty)}(Q_{\varkappa}[p])\leqslant Z_{[\mu_{d-1},+\infty)}\left(\widehat{Q}_{\varkappa}[p]\right)
\end{equation}
for any $\dfrac12<\varkappa<\dfrac{n-1}{n}$.
\end{lemma}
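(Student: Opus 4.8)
The plan is to translate both sides into zero-counts of the rational functions $R[p]$ and $R[p']$ from~\eqref{R.R.functions} and then, on each outer interval, to produce a genuine zero of $\widehat Q_\varkappa$ matching the (at most one) zero of $Q_\varkappa$ there. Set $\widehat\varkappa:=2-\tfrac1\varkappa$; the hypothesis $\tfrac12<\varkappa<\tfrac{n-1}{n}$ is equivalent to $0<\widehat\varkappa<\tfrac{n-2}{n-1}$, and $\tfrac{n-2}{n-1}$ is precisely the horizontal asymptote of $R[p']$ since $\deg p'=n-1$. By Theorem~\ref{Theorem.equations.equivalence} applied to $p$ and to $p'$, the real zeroes of $Q_\varkappa$ and of $\widehat Q_\varkappa$ are the non-trivial solutions of $R[p](z)=\varkappa$ and of $R[p'](z)=\widehat\varkappa$, respectively. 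Since $R[p]$ is strictly decreasing on $(-\infty,\mu_1)$ from $\tfrac{n-1}{n}$ to $-\infty$ (proof of Theorem~\ref{Theorem.R.zeroes}), the equation $R[p]=\varkappa$ has exactly one root there; by~\eqref{R.at.multiple.zero} that root equals $\lambda_1$ precisely when $\varkappa=\tfrac{n_1-1}{n_1}$, in which case it is a trivial zero and $Z_{(-\infty,\mu_1]}(Q_\varkappa)=0$. Hence it suffices to exhibit one genuine zero of $\widehat Q_\varkappa$ in $(-\infty,\mu_1]$ under the assumption $\varkappa\neq\tfrac{n_1-1}{n_1}$.

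First I would locate the data for $p'$. The distinct zeroes of $p'$ interlace the zeroes $\mu'_j$ of $p''$, and by~\eqref{Zeroes.ineq} the smallest zero of $p'$ is $\mu_1$ when $n_1=1$ and is $\lambda_1$ when $n_1\geq2$; correspondingly the first pole $\mu'_1$ of $R[p']$ satisfies $\mu'_1>\mu_1$ in the first case and $\lambda_1<\mu'_1<\mu_1$ in the second (the next zero of $p'$ after $\lambda_1$ is again $\mu_1$). Put $c:=\min(\mu'_1,\mu_1)$. On $(-\infty,c)$ the function $R[p']$ has no pole and is strictly decreasing from its asymptote $\tfrac{n-2}{n-1}$; its limit at the right endpoint is $-\infty$ when $c=\mu'_1$ (a pole), and, by~\eqref{R.at.multiple.zero} applied to the simple zero $\mu_1$ of $p'$, equals $R[p'](\mu_1)=0$ when $c=\mu_1$. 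In either case the range of $R[p']$ on this interval contains $\widehat\varkappa\in(0,\tfrac{n-2}{n-1})$, so there is a unique $w_0\in(-\infty,c)\subseteq(-\infty,\mu_1]$ with $R[p'](w_0)=\widehat\varkappa$.

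It remains to check that $w_0$ is a genuine (non-trivial) zero. The only zero of $p'$ in $(-\infty,c)$ is $\lambda_1$, and only when $n_1\geq2$; moreover $w_0=\lambda_1$ would force $\widehat\varkappa=R[p'](\lambda_1)=\tfrac{n_1-2}{n_1-1}$, i.e. $\varkappa=\tfrac{n_1-1}{n_1}$, which we have excluded. Thus $w_0$ is not a zero of $p'$ and is therefore a genuine zero of $\widehat Q_\varkappa$, giving $Z_{(-\infty,\mu_1]}(\widehat Q_\varkappa)\geq1\geq Z_{(-\infty,\mu_1]}(Q_\varkappa)$. The interval $[\mu_{d-1},+\infty)$ is handled by the mirror-image argument, now using that $R[p]$ is strictly increasing there, that the largest zero of $p'$ is $\mu_{d-1}$ or $\lambda_d$ according as $n_d=1$ or $n_d\geq2$, and the value $R[p'](\lambda_d)=\tfrac{n_d-2}{n_d-1}$. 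The main obstacle is exactly this endpoint bookkeeping: one must pin down the position of the extreme pole of $R[p']$ relative to $\mu_1$ (resp.\ $\mu_{d-1}$) from the interlacing pattern, and then handle the exceptional values $\varkappa=\tfrac{n_1-1}{n_1}$ (resp.\ $\tfrac{n_d-1}{n_d}$), where the candidate root degenerates to a multiple zero of $p$ and must be discarded on \emph{both} sides so that the inequality still holds.
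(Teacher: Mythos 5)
Your proposal is correct and follows essentially the same route as the paper's proof: both translate the zero counts into the equations $R[p](z)=\varkappa$ and $R[p'](z)=2-\tfrac1\varkappa$ via Theorem~\ref{Theorem.equations.equivalence}, exploit the strict monotonicity of $R[p]$ on $(-\infty,\mu_1)$ and of $R[p']$ to the left of its first pole (the paper's $\gamma_0$ is your $\mu_1'$), and treat $\varkappa=\tfrac{n_1-1}{n_1}$ as the exceptional case in which the candidate root is the trivial solution $\lambda_1$ and both counts drop. The only difference is presentational: you merge the paper's case analysis (simple versus multiple $\lambda_1$, and $\varkappa$ above versus below $\tfrac{m-1}{m}$) into a single intermediate-value argument on $\bigl(-\infty,\min(\mu_1',\mu_1)\bigr)$.
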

\begin{proof}
Consider the interval $(-\infty,\mu_1]$. If $\lambda_1$ is a simple zero of $p$, then $p'(\lambda_1)\neq0$, and $p''(z)\neq0$ for all $z\in(-\infty,\mu_1)$, and $\mu_1$ is a simple zero of $p'$. By Theorems~\ref{Theorem.R.zeroes} and~\ref{Theorem.equations.equivalence},
the function~$Q_{\varkappa}[p]$  has a unique simple zero on $(-\infty,\mu_1]$ for any $\dfrac12<\varkappa<\dfrac{n-1}{n}$, that is,
\begin{equation}\label{Theorem.ineq.Q.Q.hat.greater.1/2.proof.1}
Z_{(-\infty,\mu_1]}\left(Q_{\varkappa}[p]\right)=1.
\end{equation}
Moreover, since $R[p]$ is monotone decreasing on $(-\infty,\mu_1)$ and $R[p](\lambda_1)=0$, the function~$Q_{\varkappa}[p]$ has
a unique simple zero on the interval $(-\infty,\lambda_1)$ and no zeroes on $(\lambda_1,\mu_1]$. Analogously, we conclude
that~$\widehat{Q}_{\varkappa}[p]$ has a simple real zero on the interval $(-\infty,\mu_1]$:
\begin{equation}\label{Theorem.ineq.Q.Q.hat.greater.1/2.proof.1.1}
Z_{(-\infty,\mu_1]}\left(\widehat{Q}_{\varkappa}[p]\right)=1
\end{equation}
since $\mu_1$ is a zero of $R[p']$.

Suppose now that $\lambda_1$ is a zero of $p$ of multiplicity $m$,
$2\leqslant m\leqslant n-1$. Then the polynomial $p''$ has a simple zero $\gamma_0$ on the interval $(\lambda_1,\mu_1)$
which is a pole of the function $R[p']$. If $\dfrac{m-1}{m}<\varkappa<\dfrac{n-1}{n}$, so that,
$\dfrac{m-2}{m-1}<2-\dfrac1{\varkappa}<\dfrac{n-2}{n-1}$, then the function~$Q_{\varkappa}[p]$ has a unique
simple zero on the interval $(-\infty,\lambda_1)$ and no zeroes on $[\lambda_1,\mu_1]$ by
Theorems~\ref{Theorem.R.zeroes}--\ref{Theorem.equations.equivalence} and by~\eqref{R.at.multiple.zero}.
The same argument proves that $\widehat{Q}_{\varkappa}[p]$ has a unique simple zero on $(-\infty,\lambda_1)$
and no zeroes on $[\lambda_1,\gamma_0)$.

If $\dfrac12<\varkappa<\dfrac{m-1}{m}$, so that, $0<2-\dfrac1{\varkappa}<\dfrac{m-2}{m-1}$, then the
function~$Q_{\varkappa}[p]$ has a unique simple zero on the interval $[\lambda_1,\mu_1]$ and
no zeroes on $(-\infty,\lambda_1)$ by Theorems~\ref{Theorem.R.zeroes}--\ref{Theorem.equations.equivalence}
and by~\eqref{R.at.multiple.zero}. Analogously, we have that $\widehat{Q}_{\varkappa}[p]$ has a unique
simple zero on $[\lambda_1,\gamma_0)$ and no zeroes on $(-\infty,\lambda_1)$.

Thus, if $\lambda_1$ is a zero of $p$ of multiplicity $m$, $2\leqslant m\leqslant n-1$ and
$\varkappa\neq\dfrac{m-1}{m}$, then the following holds
\begin{equation}\label{Theorem.ineq.Q.Q.hat.greater.1/2.proof.1.3.pre}
1=Z_{(-\infty,\mu_1]}(Q_{\varkappa}[p])\leqslant Z_{(-\infty,\mu_1]}\left(\widehat{Q}_{\varkappa}[p]\right),
\end{equation}
since the function~$\widehat{Q}_{\varkappa}[p]$ can have zeroes
on the interval $[\gamma_0,\mu_1]$.

Finally, if $\lambda_1$ is a zero of $p$ of multiplicity $m$, $3\leqslant m\leqslant n-1$ and
$\varkappa=\dfrac{m-1}{m}$, so that $2-\dfrac1{\varkappa}=\dfrac{m-2}{m-1}$, then $\lambda_1$
is a solution to both equations~\eqref{Theorem.ineq.Q.Q.hat.greater.1/2.proof.0}, and it is
not a zero of the functions $Q_{\varkappa}[p]$ and $\widehat{Q}_{\varkappa}[p]$. So by~\eqref{R.at.multiple.zero} we have
\begin{equation}\label{Theorem.ineq.Q.Q.hat.greater.1/2.proof.1.2}
Z_{(-\infty,\mu_1]}(Q_{\varkappa}[p])=Z_{(-\infty,\gamma_0]}\left(\widehat{Q}_{\varkappa}[p]\right)=0,
\quad\text{and}\quad Z_{(-\infty,\mu_1]}\left(\widehat{Q}_{\varkappa}[p]\right)\geqslant0.
\end{equation}
Thus, from~\eqref{Theorem.ineq.Q.Q.hat.greater.1/2.proof.1}--\eqref{Theorem.ineq.Q.Q.hat.greater.1/2.proof.1.2} we have
\begin{equation}\label{Theorem.ineq.Q.Q.hat.greater.1/2.proof.1.3}
Z_{(-\infty,\mu_1]}(Q_{\varkappa}[p])\leqslant Z_{(-\infty,\mu_1]}\left(\widehat{Q}_{\varkappa}[p]\right)
\end{equation}
for $\dfrac12<\varkappa<\dfrac{n-1}{n}$.

\vspace{1mm}

In the same way, one can prove that
\begin{equation}\label{Theorem.ineq.Q.Q.hat.greater.1/2.proof.2}
Z_{[\mu_{d-1},+\infty)}(Q_{\varkappa}[p])\leqslant Z_{[\mu_{d-1},+\infty)}\left(\widehat{Q}_{\varkappa}[p]\right)
\end{equation}
for $\dfrac12<\varkappa<\dfrac{n-1}{n}$, as required.
\end{proof}

The next lemma deals with the intervals $(\mu_k,\mu_{k+1})$.
\begin{lemma}\label{Lemma.main.mids}
Let $p$ and $p'$ be defined as in~\eqref{Lemma.concave.R.proof.1}--\eqref{Zeroes.ineq}. For any interval $(\mu_k,\mu_{k+1})$, $k=1,\ldots,d-2$, the following inequality holds
\begin{equation}\label{auxiliary.main.ineq.mids}
Z_{(\mu_k,\mu_{k+1})}(Q_{\varkappa}[p])\leqslant Z_{(\mu_k,\mu_{k+1})}\left(\widehat{Q}_{\varkappa}[p]\right)
\end{equation}
for $\dfrac12<\varkappa<\dfrac{n-1}{n}$.
\end{lemma}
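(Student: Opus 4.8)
The plan is to reduce the claim, via Theorem~\ref{Theorem.equations.equivalence} applied to both $p$ and $p'$, to comparing the number of solutions of the two level equations $R[p](z)=\varkappa$ and $R[p'](z)=2-\tfrac1\varkappa$ on the open interval $(\mu_k,\mu_{k+1})$; note that $\tfrac12<\varkappa<\tfrac{n-1}{n}$ translates, under the increasing map $\varkappa\mapsto2-\tfrac1\varkappa$, into $0<2-\tfrac1\varkappa<\tfrac{n-2}{n-1}$, and that $\varkappa=\tfrac{m-1}{m}$ corresponds to $2-\tfrac1\varkappa=\tfrac{m-2}{m-1}$. The interval $(\mu_k,\mu_{k+1})$ contains exactly one zero $\lambda_{k+1}$ of $p$, of multiplicity $n_{k+1}$, and no pole of $R[p]$; hence by Theorem~\ref{Lemma.concave.R} the function $R[p]$ is concave there and tends to $-\infty$ at both endpoints, so $Z_{(\mu_k,\mu_{k+1})}(Q_{\varkappa})\in\{0,1,2\}$, and by~\eqref{R.at.multiple.zero} its maximum $M_k$ satisfies $M_k\geqslant R[p](\lambda_{k+1})=\tfrac{n_{k+1}-1}{n_{k+1}}$, with two solutions precisely when $\varkappa<M_k$.

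Next I would pin down the structure of $R[p']$ on the same interval. Since $p$ is real-rooted, so is $p'$, and the poles of $R[p']$ are the simple zeroes of $p''$; by de~Gua's rule there is exactly one such zero in $(\mu_k,\mu_{k+1})$ when $n_{k+1}=1$, and exactly two, one on each side of $\lambda_{k+1}$, when $n_{k+1}\geqslant2$. On each concavity lobe cut out by these poles $R[p']$ tends to $-\infty$ at the poles, while at the endpoints $R[p'](\mu_k)=R[p'](\mu_{k+1})=0$ because $\mu_k,\mu_{k+1}$ are simple zeroes of $p'$ with $p''(\mu_k),p''(\mu_{k+1})\neq0$; moreover, when $n_{k+1}\geqslant2$, the analogue of~\eqref{R.at.multiple.zero} for $p'$ gives $R[p'](\lambda_{k+1})=\tfrac{n_{k+1}-2}{n_{k+1}-1}$. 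This is the decisive compatibility: the threshold $\tfrac{n_{k+1}-1}{n_{k+1}}$ governing whether $\lambda_{k+1}$ lies between the two zeroes of $Q_{\varkappa}$ corresponds, under $\varkappa\mapsto2-\tfrac1\varkappa$, exactly to the threshold $\tfrac{n_{k+1}-2}{n_{k+1}-1}$ governing the same question for $\widehat{Q}_{\varkappa}$.

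With both pictures in hand I would argue by cases on $Z_{(\mu_k,\mu_{k+1})}(Q_{\varkappa})$. The case $0$ is trivial. For the cases $1$ and $2$, I would split $(\mu_k,\mu_{k+1})$ at its internal zeroes of $p,p',p''$, namely at $\lambda_{k+1}$ and at the one or two zeroes of $p''$, into at most four subintervals on which $p,p',p''$ are all non-vanishing, apply Proposition~\ref{lemma.55} on each, and absorb the ``$+1$'' slack of each application into a boundary zero of $\widehat{Q}_{\varkappa}$ forced by the sign data above. The hypothesis $\varkappa>\tfrac12$ is what makes this possible: it keeps the target level $2-\tfrac1\varkappa$ strictly positive, so that the endpoint values $R[p'](\mu_k)=R[p'](\mu_{k+1})=0$ lie strictly below it and each concavity lobe of $R[p']$ crosses the level an even number of times, supplying the matching solutions in pairs rather than one fewer.

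The main obstacle will be exactly this matching/absorption bookkeeping across the split at $\lambda_{k+1}$: one must verify that whenever $\varkappa<M_k$ the relevant lobe(s) of $R[p']$ genuinely rise above $2-\tfrac1\varkappa$, organizing the argument around the two subcases $\varkappa\lessgtr\tfrac{n_{k+1}-1}{n_{k+1}}$ (equivalently $2-\tfrac1\varkappa\lessgtr\tfrac{n_{k+1}-2}{n_{k+1}-1}$), which determine whether the two solutions of $R[p]=\varkappa$ straddle $\lambda_{k+1}$ and hence how many of the four subintervals each carries a zero. Once the peak heights and endpoint values are lined up as above, the per-subinterval estimates of Proposition~\ref{lemma.55} add up to $Z_{(\mu_k,\mu_{k+1})}(Q_{\varkappa})\leqslant Z_{(\mu_k,\mu_{k+1})}(\widehat{Q}_{\varkappa})$ for every $\tfrac12<\varkappa<\tfrac{n-1}{n}$, as required.
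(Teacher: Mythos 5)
Your strategy is essentially the paper's: reduce to the level equations $R[p](z)=\varkappa$ and $R[p'](z)=2-\tfrac1\varkappa$ via Theorem~\ref{Theorem.equations.equivalence}, exploit concavity of both functions, use the correspondence of thresholds $\tfrac{n_{k+1}-1}{n_{k+1}}\leftrightarrow\tfrac{n_{k+1}-2}{n_{k+1}-1}$ under $\varkappa\mapsto2-\tfrac1\varkappa$, split $(\mu_k,\mu_{k+1})$ at the zeroes of $p$, $p'$, $p''$, and combine Proposition~\ref{lemma.55} with parity of zero counts on the subintervals. However, there is a genuine gap: your bookkeeping is organized around the \emph{strict} subcases $\varkappa\lessgtr\tfrac{n_{k+1}-1}{n_{k+1}}$ and around the principle that each concavity lobe of $R[p']$ crosses the level $2-\tfrac1\varkappa$ an even number of times, and both of these fail precisely at the boundary value $\varkappa=\tfrac{n_{k+1}-1}{n_{k+1}}$ when $\lambda_{k+1}$ has multiplicity $n_{k+1}\geqslant3$. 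This value cannot be excluded, since $\tfrac12<\tfrac{n_{k+1}-1}{n_{k+1}}<\tfrac{n-1}{n}$ whenever $n_{k+1}\geqslant3$ and $d\geqslant3$. At this value, $\lambda_{k+1}$ is a solution of \emph{both} level equations, because $R[p](\lambda_{k+1})=\tfrac{n_{k+1}-1}{n_{k+1}}=\varkappa$ and $R[p'](\lambda_{k+1})=\tfrac{n_{k+1}-2}{n_{k+1}-1}=2-\tfrac1\varkappa$, yet it is a \emph{trivial} solution, i.e.\ a zero of neither $Q_{\varkappa}[p](z)$ nor $\widehat{Q}_{\varkappa}[p](z)$. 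Consequently, on the lobe $(\gamma_k',\gamma_k'')$ between the two zeroes $\gamma_k'<\lambda_{k+1}<\gamma_k''$ of $p''$, each of $Q_{\varkappa}$ and $\widehat{Q}_{\varkappa}$ may have an \emph{odd} number of zeroes (namely one), the parity argument collapses, and Proposition~\ref{lemma.55} alone gives only $Z(Q_{\varkappa})\leqslant1+Z(\widehat{Q}_{\varkappa})$ there, which does not suffice.

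To close this case one needs an ingredient your proposal never supplies: one must show that $\lambda_{k+1}$ is a solution of the two level equations with the \emph{same} multiplicity, so that $Q_{\varkappa}$ and $\widehat{Q}_{\varkappa}$ have simultaneously one zero or no zero on $(\gamma_k',\gamma_k'')$. If, say, $\lambda_{k+1}$ were a simple solution of $R[p](z)=\varkappa$ but a double solution of $R[p'](z)=2-\tfrac1\varkappa$, one would get $Z_{(\gamma_k',\gamma_k'')}(Q_{\varkappa})=1>0=Z_{(\gamma_k',\gamma_k'')}(\widehat{Q}_{\varkappa})$, and nothing in your sign or parity data rules this out. The paper settles exactly this point by comparing the Taylor expansions~\eqref{R.at.multiple.zero.expansion.2} and~\eqref{R1.at.multiple.zero.expansion}: the linear coefficients $\tfrac{2B}{An_{k+1}^2}$ and $\tfrac{2B(n_{k+1}+1)}{An_{k+1}(n_{k+1}-1)^2}$ are nonzero multiples of the same quantity $B$, hence vanish simultaneously, so the multiplicities agree. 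Apart from this missing case, your decomposition and absorption scheme matches the paper's argument for all non-critical $\varkappa$, so the needed fix is local: add the expansion comparison for $\varkappa=\tfrac{n_{k+1}-1}{n_{k+1}}$, $n_{k+1}\geqslant3$.
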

\begin{proof}
Since $\dfrac12<\varkappa<\dfrac{n-1}{n}$, one has $0<2-\dfrac1{\varkappa}<\dfrac{n-2}{n-1}$. By~\eqref{Zeroes.ineq} the polynomial $p$ has a unique zero~$\lambda_{k+1}$ on the interval $(\mu_{k},\mu_{k+1})$.

Suppose that $\lambda_{k+1}$ is a simple zero of $p$. Then $p''$ has a unique simple zero $\gamma_k$ on the interval $(\mu_k,\mu_{k+1})$. Without loss of generality we may assume\footnote{The case $\gamma_k>\lambda_{k+1}$ can be considered in the same way.} that $\gamma_k\leqslant\lambda_{k+1}$. Since $\lambda_{k+1}$ is a simple zero of $p$, by Theorem~\ref{Theorem.equations.equivalence} the zeroes of $Q_{\varkappa}[p]$ and $\widehat{Q}_{\varkappa}[p]$ on the interval $(\mu_k,\mu_{k+1})$ coincide with solutions of equations~\eqref{Theorem.ineq.Q.Q.hat.greater.1/2.proof.0}, respectively.

The numbers $\gamma_k$ and $\lambda_{k+1}$ are zeroes of the function $R[p]$
on the interval $(\mu_k,\mu_{k+1})$, so Theorems~\ref{Lemma.concave.R} and~\ref{Theorem.equations.equivalence} imply
\begin{equation}\label{Theorem.ineq.Q.Q.hat.greater.1/2.proof.4.1}
0=Z_{(\mu_k,\gamma_k]}(Q_{\varkappa}[p])\leqslant Z_{(\mu_k,\gamma_k]}\left(\widehat{Q}_{\varkappa}[p]\right),
\end{equation}
and
\begin{equation}\label{Theorem.ineq.Q.Q.hat.greater.1/2.proof.4.2}
Z_{[\lambda_{k+1},\mu_{k+1})}(Q_{\varkappa}[p])=0
\end{equation}
for $\dfrac12<\varkappa<\dfrac{n-1}{n}$.

\vspace{1mm}

Suppose first that $\gamma_{k}<\lambda_{k+1}$. Then $Z_{(\gamma_k,\lambda_{k+1})}(Q_{\varkappa})$ equals $0$ or $2$, since $R[p]$ is concave on $(\mu_k,\mu_{k+1})$ by Theorem~\ref{Lemma.concave.R}. Moreover,
by Theorems~\ref{Lemma.concave.R} and~\ref{Theorem.equations.equivalence}  the function $\widehat{Q}_{\varkappa}[p]$ has an even number of zeroes (at most two) on each interval $(\mu_k,\gamma_k)$ and $(\gamma_k,\mu_{k+1})$, since $R[p'](\mu_k)=R[p'](\mu_k)=0$ according to~\eqref{R.R.functions}.

 If $Z_{(\gamma_k,\lambda_{k+1})}\left(\widehat{Q}_{\varkappa}[p]\right)=1$ and $Z_{[\lambda_{k+1},\mu_{k+1})}\left(\widehat{Q}_{\varkappa}[p]\right)=1$, then
from~\eqref{Theorem.ineq.Q.Q.hat.greater.1/2.proof.4.2} we get
\begin{equation}\label{Theorem.ineq.Q.Q.hat.greater.1/2.proof.6.1}
0=Z_{[\lambda_{k+1},\mu_{k+1})}(Q_{\varkappa}[p])\leqslant-1+Z_{[\lambda_{k+1},\mu_{k+1})}\left(\widehat{Q}_{\varkappa}[p]\right)=0,
\end{equation}
and from~\eqref{main.work.formula.11}
\begin{equation}\label{Theorem.ineq.Q.Q.hat.greater.1/2.proof.6.2}
Z_{(\gamma_k,\lambda_{k+1})}(Q_{\varkappa}[p])\leqslant1+Z_{(\gamma_k,\lambda_{k+1})}\left(\widehat{Q}_{\varkappa}[p]\right).
\end{equation}

If $Z_{(\gamma_k,\lambda_{k+1})}\left(\widehat{Q}_{\varkappa}[p]\right)=0$ and $Z_{[\lambda_{k+1},\mu_{k+1})}\left(\widehat{Q}_{\varkappa}[p]\right)=2$, then from~\eqref{Theorem.ineq.Q.Q.hat.greater.1/2.proof.4.2} it follows that
\begin{equation}\label{Theorem.ineq.Q.Q.hat.greater.1/2.proof.6.3}
0=Z_{[\lambda_{k+1},\mu_{k+1})}(Q_{\varkappa}[p])\leqslant Z_{[\lambda_{k+1},\mu_{k+1})}\left(\widehat{Q}_{\varkappa}[p]\right)=2,
\end{equation}
and by~\eqref{main.work.formula.11} we have
\begin{equation}\label{Theorem.ineq.Q.Q.hat.greater.1/2.proof.6.4}
0=Z_{(\gamma_k,\lambda_{k+1})}(Q_{\varkappa}[p])\leqslant Z_{(\gamma_k,\lambda_{k+1})}\left(\widehat{Q}_{\varkappa}[p]\right)=0,
\end{equation}
since $Z_{(\gamma_k,\lambda_{k+1})}(Q_{\varkappa}[p])$ is an even number as we mentioned above.

If $Z_{(\gamma_k,\lambda_{k+1})}\left(\widehat{Q}_{\varkappa}[p]\right)=2$ and $Z_{[\lambda_{k+1},\mu_{k+1})}\left(\widehat{Q}_{\varkappa}[p]\right)=0$, then
from~\eqref{main.work.formula.11} we obtain
\begin{equation}\label{Theorem.ineq.Q.Q.hat.greater.1/2.proof.6.5}
Z_{(\gamma_k,\lambda_{k+1})}(Q_{\varkappa}[p])\leqslant Z_{(\gamma_k,\lambda_{k+1})}\left(\widehat{Q}_{\varkappa}[p]\right)=2,
\end{equation}
since  $Z_{(\gamma_k,\lambda_{k+1})}(Q_{\varkappa}[p])$ is an even number.
We also have
\begin{equation}\label{Theorem.ineq.Q.Q.hat.greater.1/2.proof.6.6}
0=Z_{[\lambda_{k+1},\mu_{k+1})}(Q_{\varkappa}[p])\leqslant Z_{[\lambda_{k+1},\mu_{k+1})}\left(\widehat{Q}_{\varkappa}[p]\right)=0.
\end{equation}

Finally, if $Z_{(\gamma_k,\lambda_{k+1})}\left(\widehat{Q}_{\varkappa}[p]\right)=0$ and $Z_{[\lambda_{k+1},\mu_{k+1})}\left(\widehat{Q}_{\varkappa}[p]\right)=0$,
then inequality~\eqref{main.work.formula.11} and identity~\eqref{Theorem.ineq.Q.Q.hat.greater.1/2.proof.4.2} imply
\begin{equation}\label{Theorem.ineq.Q.Q.hat.greater.1/2.proof.6.7}
0=Z_{[\gamma_{k},\mu_{k+1})}(Q_{\varkappa}[p])=Z_{[\gamma_{k},\mu_{k+1})}\left(\widehat{Q}_{\varkappa}[p]\right).
\end{equation}
Thus, from \eqref{Theorem.ineq.Q.Q.hat.greater.1/2.proof.4.1} and~\eqref{Theorem.ineq.Q.Q.hat.greater.1/2.proof.6.1}--\eqref{Theorem.ineq.Q.Q.hat.greater.1/2.proof.6.7}
we obtain inequality~\eqref{auxiliary.main.ineq.mids} for $\gamma_k<\lambda_{k+1}$ and~$\dfrac12<\varkappa<\dfrac{n-1}{n}$.

If $\lambda_{k+1}=\gamma_k$, then the equation $R(z)=\varkappa$ has no solutions on the interval $(\mu_{k},\mu_{k+1})$
for $\varkappa>0$, so inequality~\eqref{auxiliary.main.ineq.mids} is true in this case. Consequently, if $\lambda_{k+1}$ is a simple zero of the polynomial $p$, then inequality~\eqref{auxiliary.main.ineq.mids} holds for  $\dfrac12<\varkappa<\dfrac{n-1}{n}$.

\vspace{3mm}

Let now $\lambda_{k+1}$ be a zero of $p$ of multiplicity $n_{k+1}\geqslant2$. Then $p'(\lambda_{k+1})=0$,
so $p''$ has a unique simple zero at each of the intervals $(\mu_k,\lambda_{k+1})$ and $(\lambda_{k+1},\mu_{k+1})$.
We denote these zeroes as $\gamma_k'$ and $\gamma_{k}''$, respectively, so
\begin{equation*}
\mu_{k}<\gamma_{k}'<\lambda_{k+1}<\gamma_k''<\mu_{k+1}.
\end{equation*}
It is clear that $R[p](\gamma_k')=R[p](\gamma_{k}'')=0$. By Theorem~\ref{Lemma.concave.R}, the function $R[p]$ is concave (as well as the function~$R[p']$), so
the equation $R[p](x)=\varkappa$ has no solutions on the
intervals $(\mu_k,\gamma_{k}']$ and $[\gamma_k'',\mu_{k+1})$, and an even number of solution (at most two) on the interval $(\gamma_k',\gamma_k'')$ for any $\dfrac12<\varkappa<\dfrac{n-1}{n}$. Consequently, by Theorem~\ref{Theorem.equations.equivalence} we have
\begin{equation}\label{Theorem.ineq.Q.Q.hat.greater.1/2.proof.8.1}
0=Z_{(\mu_k,\gamma_{k}']}(Q_{\varkappa}[p])\leqslant Z_{(\mu_k,\gamma_{k}']}\left(\widehat{Q}_{\varkappa}[p]\right),\qquad 0=Z_{[\gamma_{k}'',\mu_{k+1})}(Q_{\varkappa}[p])\leqslant Z_{[\gamma_{k}'',\mu_{k+1})}\left(\widehat{Q}_{\varkappa}[p]\right).
\end{equation}

On the interval $(\gamma_{k}',\gamma_{k}'')$, the function $Q_{\varkappa}[p]$
has 0 or 2 zeroes if $\varkappa\neq\dfrac{n_{k+1}-1}{n_{k+1}}$ by Theorem~\ref{Theorem.equations.equivalence}. However, if $\varkappa=\dfrac{n_{k+1}-1}{n_{k+1}}$, then the equation $R[p](x)=\varkappa$ has
two solutions (counting multiplicities) on the interval $(\gamma_{k}',\gamma_{k}'')$, and
at least one of the solutions is always $\lambda_{k+1}$ by~\eqref{R.at.multiple.zero}. But $\lambda_{k+1}$ is not a zero of the function~$Q_{\varkappa}[p]$. Thus, $Q_{\varkappa}[p]$ has at most one (counting multiplicities) zero on the interval $(\gamma_k',\gamma_k'')$ in this case.

Let $n_{k+1}\geqslant2$, and $\dfrac{n_{k+1}-1}{n_{k+1}}<\varkappa<\dfrac{n-1}{n}$, so that $\dfrac{n_{k+1}-2}{n_{k+1}-1}<2-\dfrac1{\varkappa}<\dfrac{n-2}{n-1}$. Then the zeroes of $Q_{\varkappa}[p]$ and~$Q_{\varkappa}[p']$ on the interval $(\gamma_k',\gamma_k'')$ coincide with the solutions of equations~\eqref{Theorem.ineq.Q.Q.hat.greater.1/2.proof.0}, respectively.  Due to concavity of the functions $R[p]$ and $R[p']$ on the interval $(\gamma_k',\gamma_k'')$, the functions $Q_{\varkappa}[p]$ and~$Q_{\varkappa}[p']$ have even number of zeroes on this interval and at most two. Moreover, each of these functions has no zeroes on one of the intervals $(\gamma_k',\lambda_{k+1}]$ and $[\lambda_{k+1},\gamma_k'')$.

There are possible two situations:
\begin{itemize}
\item [1)] $0=Z_{(\gamma_k',\lambda_{k+1}]}\left(\widehat{Q}_{\varkappa}[p]\right)$. Then by Proposition~\ref{lemma.55}  we have
\begin{equation}\label{Theorem.ineq.Q.Q.hat.greater.1/2.proof.8.1.0}
0=Z_{(\gamma_k',\lambda_{k+1}]}(Q_{\varkappa}[p])\leqslant Z_{(\gamma_k',\lambda_{k+1}]}\left(\widehat{Q}_{\varkappa}[p]\right)=0,
\end{equation}
since the number $Z_{(\gamma_k',\lambda_{k+1}]}(Q_{\varkappa}[p])$ can be 0 or 2 only.
It is clear now that the numbers
$Z_{(\lambda_{k+1},\gamma_k'')}(Q_{\varkappa}[p])$ and $Z_{(\lambda_{k+1},\gamma_k'')}\left(\widehat{Q}_{\varkappa}[p]\right)$
are even (at most two), so by Proposition~\ref{lemma.55}, one has
\begin{equation}\label{Theorem.ineq.Q.Q.hat.greater.1/2.proof.8.1.1}
Z_{(\lambda_{k+1},\gamma_k'')}(Q_{\varkappa}[p])\leqslant
Z_{(\lambda_{k+1},\gamma_k'')}\left(\widehat{Q}_{\varkappa}[p]\right),
\end{equation}
so from~\eqref{Theorem.ineq.Q.Q.hat.greater.1/2.proof.8.1.0}--\eqref{Theorem.ineq.Q.Q.hat.greater.1/2.proof.8.1.1} it follows that
\begin{equation}\label{Theorem.ineq.Q.Q.hat.greater.1/2.proof.8.1.2}
Z_{(\gamma_k',\gamma_k'')}(Q_{\varkappa}[p])\leqslant
Z_{(\gamma_{k}',\gamma_k'')}\left(\widehat{Q}_{\varkappa}[p]\right),
\end{equation}

\item[2)] $0=Z_{(\lambda_{k+1},\gamma_k'')}\left(\widehat{Q}_{\varkappa}[p]\right)$. Then analogously, Proposition~\ref{lemma.55} implies inequality~\eqref{Theorem.ineq.Q.Q.hat.greater.1/2.proof.8.1.2}.
\end{itemize}

\vspace{2mm}

\noindent Let now $n_{k+1}\geqslant3$, and $\dfrac12<\varkappa<\dfrac{n_{k+1}-1}{n_{k+1}}$, so that $0<2-\dfrac1{\varkappa}<\dfrac{n_{k+1}-2}{n_{k+1}-1}$. Then the zeroes of $Q_{\varkappa}[p]$ and~$Q_{\varkappa}[p']$ on the interval $(\gamma_k',\gamma_k'')$ coincide with the solutions of equations~\eqref{Theorem.ineq.Q.Q.hat.greater.1/2.proof.0}, respectively. By concavity of functions $R[p]$ and $R[p']$ and by~\eqref{R.at.multiple.zero}, we obtain that both equations~\eqref{Theorem.ineq.Q.Q.hat.greater.1/2.proof.0} have exactly two solutions (of multiplicity one) on $(\gamma_k',\gamma_k'')$, therefore,
\begin{equation}\label{Theorem.ineq.Q.Q.hat.greater.1/2.proof.8.1.3}
2=Z_{(\gamma_k',\gamma_k'')}(Q_{\varkappa}[p])\leqslant
Z_{(\gamma_{k}',\gamma_k'')}\left(\widehat{Q}_{\varkappa}[p]\right)=2.
\end{equation}

Finally, suppose that $n_{k+1}\geqslant3$, and $\varkappa=\dfrac{n_{k+1}-1}{n_{k+1}}$, so
$\varkappa=\dfrac{n_{k+1}-2}{n_{k+1}-1}$. In this case, from~\eqref{Real.poly.multiple.zero} and~\eqref{R.at.multiple.zero.expansion} we have
\begin{equation}\label{R.at.multiple.zero.expansion.2}
\begin{array}{l}
R(z)=\dfrac{n_{k+1}-1}{n_{k+1}}+\dfrac{2B}{An_{k+1}^2}(z-\lambda_{k+1})+\\
\\
+\dfrac{3\left[(n_{k+1}+1)B^2+2A\cdot C\cdot n_{k+1}\right]}{A^2n_{k+1}^3}(z-\lambda_{k+1})^2+O\left((z-\lambda_{k+1})^3\right)\qquad\text{as}\quad z\to\lambda_{k+1},
\end{array}
\end{equation}
and
\begin{equation}\label{R1.at.multiple.zero.expansion}
\begin{array}{l}
R[p'](z)=\dfrac{n_{k+1}-2}{n_{k+1}-1}+
\dfrac{2B(n_{k+1}+1)}{An_{k+1}(n_{k+1}-1)^2}(z-\lambda_{k+1})+\\
\\
+\dfrac{3\left[(n_{k+1}+1)^2B^2+2A\cdot C\cdot (n_{k+1}-1)(n_{k+1}+2)\right]}{A^2n_{k+1}(n_{k+1}-1)^3}(z-\lambda_{k+1})^2+O\left((z-\lambda_{k+1})^3\right),
\end{array}
\end{equation}
as $z\to\lambda_{k+1}$.

From~\eqref{R.at.multiple.zero.expansion.2} and~\eqref{R1.at.multiple.zero.expansion} it follows that the number $\lambda_{k+1}$ is a solution of equations~\eqref{Theorem.ineq.Q.Q.hat.greater.1/2.proof.0} of the same multiplicity. That is, it is simultaneously a simple or a multiple (of multiplicity $2$) solution of both equations on the interval $(\gamma_k',\gamma_{k+1}'')$ (even for $n_{k+1}=2$). Therefore, by Theorem~\ref{Theorem.equations.equivalence}, the functions $Q_{\varkappa}[p]$ and
$\widehat{Q}_{\varkappa}[p]$ have simultaneously one or no zeroes (counting multiplicities) on the interval $(\gamma_k',\gamma_{k+1}'')$. Consequently, we obtain
\begin{equation}\label{Theorem.ineq.Q.Q.hat.greater.1/2.proof.8.1.4}
Z_{(\gamma_k',\gamma_k'')}(Q_{\varkappa}[p])\leqslant
Z_{(\gamma_{k}',\gamma_k'')}\left(\widehat{Q}_{\varkappa}[p]\right),
\end{equation}
in this case.

Now from~\eqref{Theorem.ineq.Q.Q.hat.greater.1/2.proof.8.1}, \eqref{Theorem.ineq.Q.Q.hat.greater.1/2.proof.8.1.2}, \eqref{Theorem.ineq.Q.Q.hat.greater.1/2.proof.8.1.3}, and \eqref{Theorem.ineq.Q.Q.hat.greater.1/2.proof.8.1.4}, it follows that
inequality~\eqref{auxiliary.main.ineq.mids} holds for any $\dfrac12<\varkappa<\dfrac{n-1}{n}$ in the case when $\lambda_{k+1}$ is a multiple
zero of $p$ as well, as required.
\end{proof}

Now Theorem~\ref{Theorem.ineq.Q.Q.hat.greater.1/2} follows from Lemmas~\ref{Lemma.main.ends}--\ref{Lemma.main.mids}, since
both functions $Q_{\varkappa}[p]$ and $\widehat{Q}_{\varkappa}[p]$ do not
vanish at the points $\mu_k$, $k=1,\ldots,d-1$.

\vspace{2mm}

Now we are in a position to find the lower bound for a number of non-real zeroes of the polynomial~$F_{\varkappa}[p]$.

\begin{theorem}\label{Theorem.lower.bound.1}
	Let $p$ be a real polynomial with only real zeroes given in~\eqref{Lemma.concave.R.proof.1}, and let its zeroes be indexed in the following
	order
	\begin{equation*}
	\lambda_1<\lambda_2<\cdots<\lambda_d,
	\end{equation*}
	where $d$ is the number of distinct zeroes of $p$. Then the
	following inequalities hold:
	\vspace{1mm}
	\begin{itemize}
		\item[] if $\ \ \dfrac{m_1-1}{m_1}<\varkappa<\dfrac{n-d+1}{n-d+2}$, then $F_{\varkappa}[p]$ can have only real zeroes
		\begin{equation}\label{Theorem.lower.bound.1.estimates.1}
		Z_{C}(F_{\varkappa}[p])\geqslant0,
		\end{equation}
		\item[] if $\ \ \dfrac{n-d+k-1}{n-d+k}\leqslant\varkappa<\dfrac{n-d+k}{n-d+k+1}$, $k=2,\ldots,d-1$, then
		
		\vspace{1mm}

		\begin{equation}\label{Theorem.lower.bound.1.estimates.2}
		Z_{C}(F_{\varkappa}[p])\geqslant2k-2,
		\end{equation}
	\end{itemize}
	where $m_1$ is defined in~\eqref{order.multiplicities.1}.
\end{theorem}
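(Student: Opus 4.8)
The plan is to convert the lower bound on non-real zeroes into an upper bound on real ones, and then to transport that real count onto a high-order derivative of $p$, where it becomes easy to estimate.

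\emph{Reduction.} The first line ($Z_C\ge 0$) is vacuous, so I focus on the ranges indexed by $k\ge 2$; put $N:=n-d$. By Theorem~\ref{Theorem.equations.equivalence} the non-real zeroes of $F_\varkappa[p]$ are exactly the non-real zeroes of $Q_\varkappa[p]$, whence $Z_C(F_\varkappa)=Z_{nt}(F_\varkappa)-Z_{\mathbb R}(Q_\varkappa)$. The left endpoint $\varkappa=\tfrac{N+k-1}{N+k}=\tfrac{m-1}{m}$ has $m=N+k\ge N+2$, which exceeds the largest admissible multiplicity $n-d+1=N+1$ of a zero of $p$; hence $p$ has no zero of multiplicity $m$, and Theorem~\ref{Theorem.total.number.of.zeroes.F.general} together with Corollary~\ref{Corol.total.number.of.zeroes.F.general} give $Z_{nt}(F_\varkappa)=2d-2$ on the whole half-open interval. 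So it suffices to prove $Z_{\mathbb R}(Q_\varkappa)\le 2(d-k)$.

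\emph{The interior.} Writing $T(\varkappa)=2-\tfrac1\varkappa$ and $\varkappa_i=T^{i}(\varkappa)$, Theorem~\ref{Theorem.ineq.Q.Q.hat.greater.1/2} reads $Z_{\mathbb R}(Q_{\varkappa_i}[p^{(i)}])\le Z_{\mathbb R}(Q_{\varkappa_{i+1}}[p^{(i+1)}])$ whenever $\tfrac12<\varkappa_i<\tfrac{n-i-1}{n-i}$ and $p^{(i)}$ has at least three distinct zeroes. Since $T$ carries $\tfrac{a}{a+1}$ to $\tfrac{a-1}{a}$, for $\varkappa$ in the open interval one gets $\varkappa_i\in\bigl(\tfrac{N+k-1-i}{N+k-i},\tfrac{N+k-i}{N+k+1-i}\bigr)$; the upper constraint holds at every level (it reduces to $k\le d-1$) and the lower constraint $\varkappa_i>\tfrac12$ holds for $i\le N+k-2$. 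Each $p^{(i)}$ is real-rooted. If at some level the number of distinct zeroes drops to two, then $0<\varkappa_i<\tfrac{n-i-1}{n-i}$ forces $Z_{\mathbb R}(Q_{\varkappa_i}[p^{(i)}])=2$ by Theorems~\ref{Theorem.number.complex.zeroes.n-1.n} and~\ref{Theorem.2.distinct.zeroes}, already giving $\le 2\le 2(d-k)$; otherwise I iterate up to level $N+k-1$, where $p^{(N+k-1)}$ has degree $d-k+1$ and the trivial bound $Z_{\mathbb R}(Q)\le Z_{nt}\le 2(d-k+1)-2$ yields the claim. Either way $Z_{\mathbb R}(Q_\varkappa)\le 2(d-k)$, i.e.\ $Z_C(F_\varkappa)\ge 2k-2$ on the open interval.

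\emph{The endpoint.} At $\varkappa=\tfrac{N+k-1}{N+k}$ one has $\varkappa_i=\tfrac{N+k-1-i}{N+k-i}$, so the chain is licit only for $i\le N+k-3$ and halts one step sooner, at $g:=p^{(N+k-2)}$ with $\varkappa_{N+k-2}=\tfrac12$. As the largest multiplicity is $N+1$ and $N+k-2\ge N$, de Gua's rule makes every surviving zero of $g$ simple, so $g$ has only simple zeroes and degree $D:=d-k+2\ge 3$. For $g$ at $\varkappa=\tfrac12$ each extreme interval $(-\infty,\mu_1]$, $[\mu_{D-1},+\infty)$ carries exactly one real zero of $Q_{1/2}[g]$ (Lemma~\ref{Lemma.min.numb.sol.R}); on the inner intervals I intend to re-run Lemma~\ref{Lemma.main.mids} at $\varkappa=\tfrac12$, bounding the inner real zeroes of $Q_{1/2}[g]$ by those of $\widehat{Q}_{1/2}[g]=Q_{0}[g']=g'''/g'$, which number at most $\deg g'''=D-3$. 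This would give $Z_{\mathbb R}(Q_{1/2}[g])\le 2+(D-3)=D-1\le 2(D-2)=2(d-k)$, and the chain then delivers the bound at the endpoint.

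\emph{Main obstacle.} The interval bookkeeping and the count of distinct zeroes of the derivatives are routine; the real difficulty sits at the boundary value $\varkappa=\tfrac12$. There Theorem~\ref{Theorem.ineq.Q.Q.hat.greater.1/2} genuinely breaks down --- for $p(z)=z^3-z$ one has $Z_{\mathbb R}(Q_{1/2})=2$ but $Z_{\mathbb R}(\widehat{Q}_{1/2})=Z_{\mathbb R}(Q_0[p'])=0$, a defect of exactly one real zero on each extreme interval --- so I cannot invoke it as a black box. The crux is to verify that the inner-interval comparison of Lemma~\ref{Lemma.main.mids} survives the degeneration $2-\tfrac1\varkappa\to 0$ for simple-rooted $g$ (where the companion equation becomes $g'''=0$ and the poles $\mu_k$ of $R[g]$ turn into zeroes of $R[g']$); granting this, the two extreme intervals contribute only the harmless fixed surplus $2$, which is absorbed by $D-1\le 2(D-2)$. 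I expect reworking that case analysis at the endpoint to be the one place demanding genuinely new estimates.
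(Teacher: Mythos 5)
Your proposal is correct in substance and rests on the same two pillars as the paper's own proof: iterated application of Theorem~\ref{Theorem.ineq.Q.Q.hat.greater.1/2} along the chain $\varkappa_{i+1}=2-\tfrac{1}{\varkappa_i}$, $p^{(i)}\mapsto p^{(i+1)}$, plus a separate count at the terminal value $\varkappa=\tfrac12$ based on Proposition~\ref{lemma.55}. The organization, however, is genuinely different. The paper first proves the bounds for polynomials with only simple zeroes and then runs an induction on the maximal multiplicity of the zeroes of $p$, applying Theorem~\ref{Theorem.ineq.Q.Q.hat.greater.1/2} once per induction step; because that induction passes through values $\varkappa=\tfrac{m-1}{m}$ which may coincide with multiplicities of $p$, the paper also needs the auxiliary solution counts $N_{\mathbb{R}}^{(\varkappa)}$, $\widehat{N}_{\mathbb{R}}^{(\varkappa)}$ of equations~\eqref{Theorem.ineq.Q.Q.hat.greater.1/2.proof.0}. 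You instead unroll everything into a single chain of $N+k-1$ (resp.\ $N+k-2$) differentiations applied directly to the given $p$, and your opening reduction --- no zero of $p$ has multiplicity $N+k\geqslant n-d+2$, hence $Z_{nt}(F_{\varkappa})=2d-2$ on the whole half-open interval by Theorem~\ref{Theorem.total.number.of.zeroes.F.general} and Corollary~\ref{Corol.total.number.of.zeroes.F.general} --- eliminates that bookkeeping entirely. Your handling of the degenerate levels (distinct-zero count dropping to two) and your observation that $g=p^{(N+k-2)}$ is automatically simple-rooted (de Gua's rule plus the multiplicity bound $n-d+1$) are both correct, and they are exactly what make the endpoint reachable.

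The one repair needed sits at the endpoint, precisely where you flagged uncertainty, but it does not require ``genuinely new estimates.'' The interval-wise inequality you hope to extend from Lemma~\ref{Lemma.main.mids} to $\varkappa=\tfrac12$, namely $Z_{(\mu_k,\mu_{k+1})}(Q_{1/2})\leqslant Z_{(\mu_k,\mu_{k+1})}(\widehat{Q}_{1/2})$, is not justified by the method: the proof of Lemma~\ref{Lemma.main.mids} uses that the zeroes of $\widehat{Q}_{\varkappa}$ occur in pairs on the subintervals cut out by the zeroes of $p''$, a parity forced by $R[p']$ vanishing at $\mu_k,\mu_{k+1}$ together with positivity of the level $2-\tfrac1{\varkappa}$; at $2-\tfrac1{\varkappa}=0$ this collapses, since the zeroes of $\widehat{Q}_{1/2}[g]=g'''/g'$ obey no parity. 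But this one-for-one bound is also unnecessary. What Proposition~\ref{lemma.55} actually yields --- and what the paper itself proves in the $\varkappa=\tfrac12$ part of its argument, see~\eqref{Theorem.lower.bound.1.proof.1} --- is the factor-two version: on each inner interval $Q_{1/2}[g]$ has an even number of zeroes ($0$ or $2$, counting multiplicities), and it can have two only if $g'''$ vanishes inside that interval; since $g'''$ has at most $D-3$ zeroes, at most $D-3$ inner intervals contribute, whence $Z_{\mathbb{R}}(Q_{1/2}[g])\leqslant 2(D-3)+2=2D-4=2(d-k)$. Note that this lands exactly on the required bound with no slack, unlike your claimed $2+(D-3)=D-1$; so nothing further may be given away at this step, and the endpoint closes by quoting the paper's own $\varkappa=\tfrac12$ computation for simple-rooted polynomials rather than by new analysis.
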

\begin{proof}
We prove this theorem by induction. Suppose first that all zeroes of the polynomial $p$ are simple, that is, $d=n$ and $m_1=1$. Then inequalities~\eqref{Theorem.lower.bound.1.estimates.1}--\eqref{Theorem.lower.bound.1.estimates.2}
have the following form
	\begin{itemize}
	\item[] if $\ \ 0<\varkappa<\dfrac{1}{2}$, then
	\begin{equation}\label{Theorem.lower.bound.1.estimates.1.proof.1}
	Z_{C}(F_{\varkappa}[p])\geqslant0,
	\end{equation}
	\item[] if $\ \ \dfrac{k-1}{k}\leqslant\varkappa<\dfrac{k}{k+1}$, $k=2,\ldots,n-1$, then
	
	\vspace{1mm}

	\begin{equation}\label{Theorem.lower.bound.1.estimates.2.proof.1}
	Z_{C}(F_{\varkappa}[p])\geqslant2k-2,
	\end{equation}
\end{itemize}

Since all zeroes of $p$ are simple, the set of all non-trivial zeroes of $F_{\varkappa}[p]$ (the set of all zeroes of~$Q_{\varkappa}[p]$) coincides
with the set of all solutions of the equation $R[p](z)=\varkappa$ by Theorem~\ref{Theorem.equations.equivalence}. Moreover, if $0<\varkappa<\dfrac{1}{2}$, then
the equation $R[p](z)=\varkappa$ may have all real solutions. Indeed, if the polynomial $p$ is such that
$p$ and $p''$ have no common zeroes, then the function $R[p]$ has exactly two \textit{distinct} zeroes on every
interval $(\mu_{k},\mu_{k+1})$, $k=1,\ldots,n-2$, and exactly one (counting multiplicities) zero on each interval $(-\infty,\mu_1)$ and $(\mu_{n-1},+\infty)$ by Theorem~\ref{Theorem.R.zeroes}. Thus, on each interval $(\mu_{k},\mu_{k+1})$, $k=1,\ldots,n-2$, the function $R[p]$ has the maximum $M_k$. Consequently, for any $0<\varkappa<\min\{M_1,\ldots,M_{n-1}\}$ all solutions of the equation $R[p](z)=\varkappa$ are real. At the same time, for any $n\geqslant3$ each zero of the second derivative of the polynomial $q_n(z)=(z^2-1)P_{n-2}^{(1,1)}(z)$,
where $P_m^{(\alpha,\beta)}$ is the Jacobi polynomial, is a zero of the polynomial $q_n$ itself, see, e.g.~\cite{Andrews_Askey_Roy}. Therefore, in this case, the equation $R[p](z)=\varkappa$ has
exactly $2$ real solutions for any $0<\varkappa<\dfrac{n-1}{n}$. Thus, we obtain that
\begin{equation}\label{Theorem.Q.simple.real.zeros.proof.0}
2\leqslant Z_{\mathbb{R}}(Q_{\varkappa}[p])\leqslant2n-2,
\end{equation}
for any $0<\varkappa<\dfrac12$.

Let $\varkappa\in\left(\dfrac{k-1}{k},\dfrac{k}{k+1}\right)$ for some number $k$, $k=2,\ldots,n-1$. Introduce the following sequence of numbers
\begin{equation}\label{Theorem.Q.simple.real.zeros.proof.1}
\varkappa_{0}\eqdef \varkappa,\qquad \varkappa_{i}\eqdef 2-\dfrac1{\varkappa_{i-1}},\quad i=1,\ldots,k-1,
\end{equation}
and note that $\varkappa_{k-1}\in\left(0,\dfrac12\right)$. Since $\widehat{Q}_{\varkappa_{i}}[p](z)\equiv Q_{\varkappa_{i+1}}[p'](z)$ and $\deg p^{(k-1)}=n-k+1$,
we have from~\eqref{auxiliary.main.ineq} and~\eqref{Theorem.Q.simple.real.zeros.proof.0}
\begin{equation}\label{Theorem.Q.simple.real.zeros.proof.2}
Z_{\mathbb{R}}(Q_{\varkappa}[p])=Z_{\mathbb{R}}(Q_{\varkappa_0}[p])\leqslant Z_{\mathbb{R}}(Q_{\varkappa_1}[p'])\leqslant\ldots\leqslant Z_{\mathbb{R}}(Q_{\varkappa_{k-1}}[p^{(k-1)}])\leqslant2(n-k+1)-2=2n-2k
\end{equation}
for any $\dfrac{k-1}{k}<\varkappa<\dfrac{k}{k+1}$, $k=2,\ldots,n-1$.

Suppose now  that $\varkappa=\dfrac12$.
On each interval $(\mu_k,\mu_{k+1})$, $k=1,\ldots,n-2$, there lies a zero $\lambda_{k+1}$ of $p$
and a zero $\gamma_k$ of $p''$. By Theorem~\ref{Theorem.equations.equivalence}, the function $Q_{\tfrac12}[p]$ has at most $2$ zeroes on $(\mu_k,\mu_{k+1})$, counting multiplicities.

If $\lambda_{k+1}=\gamma_k$, then the function $Q_{\tfrac12}[p]$ has no zeroes on $(\mu_k,\mu_{k+1})$ due to concavity of $R[p]$.

Let $\lambda_{k+1}<\gamma_k$ (the case $\lambda_{k+1}>\gamma_k$ can be considered similarly). Then by concavity of $R[p]$, the function~$Q_{\tfrac12}[p]$ has no zeroes
on the intervals $(\mu_{k},\lambda_{k+1}]$ and $[\gamma_k,\mu_{k+1})$ and has an
even number (at~most~$2$) on the interval $(\lambda_{k+1},\gamma_k)$. The function $\widehat{Q}_{\tfrac12}[p]$ has at most one zero,
counting multiplicities, on $(\lambda_{k+1},\gamma_k)$, since it has at most two zeroes on the interval $[\mu_{k},\gamma_k)$
due to concavity of the function~$R[p']$.

If $\widehat{Q}_{\tfrac12}[p]$ does not vanish on $(\lambda_{k+1},\gamma_k)$, then by Proposition~\ref{lemma.55} one
has $Z_{(\lambda_{k+1},\gamma_k)}({Q}_{\varkappa})=0$. If there is a (simple) zero of $\widehat{Q}_{\tfrac12}[p]$
on $(\lambda_{k+1},\gamma_k)$, then $Z_{(\lambda_{k+1},\gamma_k)}({Q}_{\varkappa})=0$ or $2$ according to~\eqref{main.work.formula.11}.
Since $\widehat{Q}_{\tfrac12}[p](z)=\dfrac{p'''(z)}{p'(z)}$, there are at most $n-3$ intervals $(\mu_k,\mu_{k+1})$
where $Q_{\tfrac12}[p]$ has exactly two zeroes (counting multiplicities). Moreover, it has at most one zero
(counting multiplicities) on each interval $(-\infty,\mu_1)$ and $(\mu_{n-1},+\infty)$ by
Theorems~\ref{Theorem.R.zeroes}--\ref{Theorem.equations.equivalence}. Thus, we obtain that
\begin{equation}\label{Theorem.lower.bound.1.proof.1}
0\leqslant Z_{\mathbb{R}}(Q_{\tfrac12}[p])\leqslant 2n-4,
\end{equation}
since $\mu_k$, $k=1,\ldots,n-1$, are not zeroes of $Q_{\tfrac12}[p]$.

Let now $\varkappa=\dfrac{k-1}{k}$ for some $k=3,\ldots,n-1$. Then with the
numbers~\eqref{Theorem.Q.simple.real.zeros.proof.1}, we have
\begin{equation}\label{Theorem.Q.simple.real.zeros.proof.3}
\varkappa_0=\varkappa=\dfrac{k-1}{k},\qquad \varkappa_i=2-\dfrac1{\varkappa_{i-1}},
\qquad i=1,\ldots,k-2,
\end{equation}
where $\varkappa_{k-2}=\dfrac12$. Now from~\eqref{auxiliary.main.ineq} and~\eqref{Theorem.lower.bound.1.proof.1} it follows
\begin{equation}\label{Theorem.Q.simple.real.zeros.proof.22223}
Z_{\mathbb{R}}(Q_{\varkappa}[p])=Z_{\mathbb{R}}(Q_{\varkappa_0}[p])\leqslant Z_{\mathbb{R}}(Q_{\varkappa_1}[p'])\leqslant\ldots\leqslant Z_{\mathbb{R}}(Q_{\varkappa_{k-2}}[p^{(k-2)}])\leqslant2(n-k+2)-4=2n-2k,
\end{equation}
since $\widehat{Q}_{\varkappa_{i}}[p](z)\equiv Q_{\varkappa_{i+1}}[p'](z)$ and $\deg p^{(k-2)}=n-k+2$.
Thus, inequalities~\eqref{Theorem.Q.simple.real.zeros.proof.0},~\eqref{Theorem.Q.simple.real.zeros.proof.2}
and~\eqref{Theorem.Q.simple.real.zeros.proof.22223} hold for
polynomials with only simple zeroes.

For the sequel, it is more convenient to rewrite~\eqref{Theorem.Q.simple.real.zeros.proof.2}
and~\eqref{Theorem.Q.simple.real.zeros.proof.22223} as follows.
\begin{equation}\label{Theorem.Q.simple.real.zeros.proof.2223}
Z_{\mathbb{R}}(Q_{\varkappa}[p])\leqslant2j,
\end{equation}
for $\dfrac{n-j-1}{n-j}\leqslant\varkappa<\dfrac{n-j}{n-j+1}$, $j=1,\ldots,n-2$.

\vspace{2mm}

Suppose now that the polynomial has roots of multiplicity at most $2$. Then its derivative has only simple zeroes. Then by Theorem~\ref{Theorem.ineq.Q.Q.hat.greater.1/2},~\eqref{Theorem.Q.simple.real.zeros.proof.0}, and~\eqref{Theorem.Q.simple.real.zeros.proof.2223}, one has
\begin{equation}\label{Theorem.Q.simple.real.zeros.proof.0_1}
2\leqslant Z_{\mathbb{R}}(Q_{\varkappa}[p])\leqslant Z_{\mathbb{R}}\left(\widehat{Q}_{\varkappa}[p]\right)\leqslant2d-2,
\end{equation}
for any $\dfrac{n-d}{n-d+1}<\varkappa<\dfrac{n-d+1}{n-d+2}$, and
\begin{equation}\label{Theorem.Q.simple.real.zeros.proof.2223_1}
Z_{\mathbb{R}}(Q_{\varkappa}[p])\leqslant Z_{\mathbb{R}}\left(\widehat{Q}_{\varkappa}[p]\right)\leqslant2j,
\end{equation}
for $\dfrac{n-j-1}{n-j}\leqslant\varkappa<\dfrac{n-j}{n-j+1}$, $j=1,\ldots,d-2$.

\vspace{2mm}

It is clear now that if inequalities~\eqref{Theorem.Q.simple.real.zeros.proof.0_1}--\eqref{Theorem.Q.simple.real.zeros.proof.2223_1} are true for polynomials
whose zeroes have multiplicity at most $M$, $1\leqslant M\leqslant n-2$, then
they hold for polynomials whose zeroes have multiplicity at most $M+1$ according to Theorem~\ref{Theorem.ineq.Q.Q.hat.greater.1/2}, since for $p'$ they are true by assumption. Consequently,
inequalities~\eqref{Theorem.Q.simple.real.zeros.proof.0_1}--\eqref{Theorem.Q.simple.real.zeros.proof.2223_1} hold for any polynomial with only real zeroes.

Note now that for $\dfrac{m_1-1}{m_1}<\varkappa\leqslant\dfrac{n-d}{n-d+1}$, where $m_1$ is defined in~\eqref{order.multiplicities.1}, we can conclude that $Q_{\varkappa}[p]$
may have only real zeroes as well as in~\eqref{Theorem.Q.simple.real.zeros.proof.0_1}. Therefore,
\begin{equation}\label{Theorem.Q.simple.real.zeros.proof.0_11}
2\leqslant Z_{\mathbb{R}}(Q_{\varkappa}[p])\leqslant Z_{\mathbb{R}}\left(\widehat{Q}_{\varkappa}[p]\right)\leqslant2d-2,
\end{equation}
for any $\dfrac{m_1-1}{m_1}<\varkappa\leqslant\dfrac{n-d}{n-d+1}$.

\vspace{2mm}

Let us now denote by $N_{\mathbb{R}}^{(\varkappa)}$ and $\widehat{N}_{\mathbb{R}}^{(\varkappa)}$ the number of real solutions of equations~\eqref{Theorem.ineq.Q.Q.hat.greater.1/2.proof.0}, respectively. If $\varkappa\neq\dfrac{k-1}{k}$, $k=1,\ldots,n$, the inequalities~\eqref{Theorem.Q.simple.real.zeros.proof.0_1}--\eqref{Theorem.Q.simple.real.zeros.proof.2223_1}
hold for $N^{(\varkappa)}$ by Theorem~\ref{Theorem.equations.equivalence}.

Furthermore, from~\eqref{R.at.multiple.zero.expansion.2}--\eqref{R1.at.multiple.zero.expansion} it follows that if a zero $\lambda_k$ of the polynomial $p$ is multiple, then it is a solution to
both equations~\eqref{Theorem.ineq.Q.Q.hat.greater.1/2.proof.0} of the same multiplicity ($1$ or $2$). Since all other solutions to these equations (and only they) are zeroes of the functions
$Q_{\varkappa}[p]$ and $\widehat{Q}_{\varkappa}[p]$ for $\dfrac12<\varkappa<\dfrac{n-1}{n}$, we obtain from Theorem~\ref{Theorem.ineq.Q.Q.hat.greater.1/2} the following inequality
\begin{equation}\label{Theorem.lower.bound.1.proof.2}
N_{\mathbb{R}}^{(\varkappa)}\leqslant\widehat{N}_{\mathbb{R}}^{(\varkappa)}
\end{equation}
for $\dfrac12<\varkappa<\dfrac{n-1}{n}$.

Now if $p$ has only simple zeroes, then $N_{\mathbb{R}}^{\left(\tfrac12\right)}=Z_{\mathbb{R}}\left(Q_{\tfrac12}[p]\right)$,
so we have
\begin{equation}\label{Theorem.Q.simple.real.zeros.proof.2223_22}
N_{\mathbb{R}}^{\left(\tfrac12\right)}\leqslant2n-4
\end{equation}
by~\eqref{Theorem.lower.bound.1.proof.1}. Now using~\eqref{Theorem.lower.bound.1.proof.2}--\eqref{Theorem.Q.simple.real.zeros.proof.2223_22}, one can prove
that inequalities~\eqref{Theorem.Q.simple.real.zeros.proof.0_1}--\eqref{Theorem.Q.simple.real.zeros.proof.2223_1} hold for $N_{\mathbb{R}}^{(\varkappa)}$
whenever $\dfrac{m_1-1}{m_1}<\varkappa<\dfrac{n-1}{n}$. The proof is similar to the one we used to 
prove~\eqref{Theorem.Q.simple.real.zeros.proof.0_1}--\eqref{Theorem.Q.simple.real.zeros.proof.2223_1}.

Consequently, for $N_{\mathbb{R}}^{(\varkappa)}$ the following inequalities hold
\begin{equation}\label{Theorem.Q.simple.real.zeros.proof.2223_2}
N_{\mathbb{R}}^{(\varkappa)}\leqslant2j,
\end{equation}
for $\dfrac{n-j-1}{n-j}\leqslant\varkappa\leqslant\dfrac{n-j}{n-j+1}$, $j=1,\ldots,d-2$ and
\begin{equation}\label{Theorem.Q.simple.real.zeros.proof.2223_212}
N_{\mathbb{R}}^{(\varkappa)}\leqslant2d-2,
\end{equation}
for $\dfrac{m_1-1}{m_1}\leqslant\varkappa\leqslant\dfrac{n-d+1}{n-d+2}$.

\vspace{2mm}

Recall that by Theorem~\ref{Theorem.equations.equivalence} the number of solutions of the equation $R[p](z)=\varkappa$ equals $2d-2$ where~$d$ is the number of distinct zeroes of $p$, and the set of non-real zeroes of the polynomial $F_{\varkappa}[p]$ coincides with the set of non-real solutions of the equation $R[p](z)=\varkappa$. Now inequalities~\eqref{Theorem.lower.bound.1.estimates.1}--\eqref{Theorem.lower.bound.1.estimates.2} follow from~\eqref{Theorem.Q.simple.real.zeros.proof.0_11}--\eqref{Theorem.Q.simple.real.zeros.proof.2223_212}, as required.
\end{proof}

\setcounter{equation}{0}
\section{Polynomials with non-real zeroes}\label{section:non-real.zeroes}

In this section, we disprove a conjecture by B.\,Shapiro~\cite{Shapiro} and discuss possible extensions of our
results from Section~\ref{section:real.zeroes} for arbitrary real polynomials.

Let $p$ be an arbitrary real polynomial. In this case, the polynomial $F_{\varkappa}[p]$
can have both real and non-real trivial zeroes. So, to study non-trivial zeroes of
$F_{\varkappa}[p]$ it is more convenient to consider the function~$Q_{\varkappa}[p]$
defined in~\eqref{main.function.Q}.

In~\cite[Conjecture 11]{Shapiro}, the following analogue of the Hawaii conjecture~\cite{CCS_1987,Tyaglov_Hawaii} appeared.
\begin{conjecture}[B.\,Shapiro]\label{Conjecture.Shapiro}
Let $p$ be an arbitrary real polynomial of degree $n$, $n\geqslant2$, then
\begin{equation}\label{Conjecture.Shapiro.ineq}
Z_{\mathbb{R}}\left(Q_{\tfrac{n-1}{n}}[p]\right)\leqslant Z_{C}(p).
\end{equation}
\end{conjecture}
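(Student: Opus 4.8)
The plan is to attempt an induction on $n=\deg p$, exploiting the self-similarity of the critical exponent under differentiation. The key observation is that $2-\dfrac1\varkappa$ evaluated at $\varkappa=\dfrac{n-1}{n}$ equals $\dfrac{n-2}{n-1}=\dfrac{(n-1)-1}{n-1}$, so by the definition~\eqref{Q.hat} of $\widehat{Q}_\varkappa$ one has $\widehat{Q}_{\frac{n-1}{n}}[p]=Q_{\frac{(n-1)-1}{n-1}}[p']$. In other words, the object $\widehat{Q}$ attached to $p$ is precisely the conjectured object for the derivative $p'$, whose degree is $n-1$: the statement is self-reproducing under differentiation. Since by Rolle's theorem $Z_C(p')\leqslant Z_C(p)$, this suggests reducing the bound for $p$ to the (inductively known) bound for $p'$. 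The base case $n=2$ is immediate: here $\varkappa=\tfrac12=\tfrac{n-1}{n}$ forces $\deg F_{\frac{n-1}{n}}[p]=2n-4=0$, so $Q_{\frac12}[p]$ has no zeroes and $Z_{\mathbb R}(Q_{\frac12})=0\leqslant Z_C(p)$.

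For the inductive step I would cover the real line by the finitely many open intervals on which $p$, $p'$ and $p''$ are all nonzero, apply Proposition~\ref{lemma.55} on each, and sum. This yields
\[
Z_{\mathbb R}(Q_{\frac{n-1}{n}}[p])\leqslant N+Z_{\mathbb R}\bigl(\widehat{Q}_{\frac{n-1}{n}}[p]\bigr)=N+Z_{\mathbb R}\bigl(Q_{\frac{(n-1)-1}{n-1}}[p']\bigr),
\]
where $N$ is the number of intervals on which the crude estimate $Z_{(a,b)}(Q_\varkappa)\leqslant 1+Z_{(a,b)}(\widehat{Q}_\varkappa)$ actually contributes its extra unit. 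Combining with the inductive hypothesis $Z_{\mathbb R}(Q_{\frac{(n-1)-1}{n-1}}[p'])\leqslant Z_C(p')\leqslant Z_C(p)$ would give $Z_{\mathbb R}(Q_{\frac{n-1}{n}}[p])\leqslant N+Z_C(p)$; to close the induction one must show that the genuine contribution of the $+1$ terms is compensated, i.e.\ essentially that $N$ can be absorbed into the difference $Z_C(p)-Z_C(p')$.

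The hard part is exactly this bookkeeping of the $+1$ terms, and here two difficulties compound. First, even in the Hawaii case $\varkappa=1$ (where $2-\tfrac1\varkappa=1$ is a fixed point and the recursion is cleanest) controlling these extra units required the long and delicate interval analysis of~\cite{Tyaglov_Hawaii}; for the moving exponent $\tfrac{n-1}{n}\mapsto\tfrac{n-2}{n-1}$ the same absorption is far less clear. Second, and more seriously, for a general real $p$ with non-real zeroes the structural backbone used throughout Section~\ref{section:real.zeroes}---the partial-fraction form~\eqref{function.R.simple.frac} of $R$, the negativity of the residues $\beta_j$, and the concavity of $R$ between its poles (Theorem~\ref{Lemma.concave.R})---is lost: $R$ acquires simple-pole terms and complex poles, so the sign and convexity control underpinning the interval-by-interval counting disappears. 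I would therefore expect the naive induction to leak extra real zeroes precisely where $R$ fails to be concave, and rather than forcing the argument through I would test the conjecture numerically on low-degree polynomials carrying a single conjugate pair of non-real zeroes---exactly the regime where $N$ is most likely to exceed $Z_C(p)-Z_C(p')$---anticipating that the inequality~\eqref{Conjecture.Shapiro.ineq} breaks, which is what ultimately yields a counterexample.
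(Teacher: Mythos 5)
Your final verdict is the right one, and it coincides with the paper's: the statement is false, and the paper's own treatment of it is not a proof but a disproof by explicit counterexample. Your diagnosis of where a proof attempt must break down is also essentially the paper's: once $p$ has non-real zeroes, the function $R$ loses the structure~\eqref{function.R.simple.frac} (purely double poles with negative coefficients $\beta_j$) and with it the concavity of Theorem~\ref{Lemma.concave.R}, so the interval-by-interval control of the ``$+1$'' terms coming from Proposition~\ref{lemma.55} collapses; moreover, for real-rooted $p$ one has $R(x)<\tfrac{n-1}{n}$ on $\mathbb{R}$ by~\eqref{Newton.inequality.via.R}, whereas nothing prevents $R$ from attaining the value $\tfrac{n-1}{n}$ at real points when non-real zeroes are present. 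The regime you single out for a numerical search --- low degree, a single conjugate pair of non-real zeroes --- is exactly where the paper's counterexample lives.

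The genuine gap is that your proposal stops there: ``anticipating that the inequality breaks'' is not a disproof, and no witness is exhibited or verified. The paper completes precisely this missing step. It takes
\[
p(z)=(z^2+a^2)(z+a^2)(z-1),\qquad a\in\mathbb{R}\setminus\{-1,0,1\},
\]
of degree $n=4$, so that $Z_{C}(p)=2$, and computes $Q_{\tfrac34}[p]$ in closed form; its numerator has the four real zeroes $\lambda_1=\lambda_2=\dfrac{a(a+1)}{a-1}$ and $\lambda_3=\lambda_4=-\dfrac{a(a-1)}{a+1}$, i.e.\ two double real zeroes, which turn out to be maximum points of $R$ at the level exactly $\tfrac34=\tfrac{n-1}{n}$. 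Hence $Z_{\mathbb{R}}\left(Q_{\tfrac34}\right)=4>2=Z_{C}(p)$, and~\eqref{Conjecture.Shapiro.ineq} fails. To close your argument you would need to produce such an instance and verify the count; the inductive scaffolding you set up (the identity $\widehat{Q}_{\tfrac{n-1}{n}}[p]=Q_{\tfrac{n-2}{n-1}}[p']$, Rolle's $Z_{C}(p')\leqslant Z_{C}(p)$, and Proposition~\ref{lemma.55}) is sound but by itself decides nothing about the statement either way. Note also that your base case $n=2$ needs the caveat that $F_{\tfrac12}[p]\equiv0$ when $p$ has a double zero, a degenerate situation the paper explicitly excludes before counting zeroes of $F_{\tfrac{n-1}{n}}[p]$.
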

The Hawaii conjecture posed in~\cite{CCS_1987} and proved in~\cite{Tyaglov_Hawaii} states that inequality~\eqref{Conjecture.Shapiro.ineq} is true for the function~$Q_1[p]$.
As it was shown in Sections~\ref{section:function.Q}--\ref{section:real.zeroes}, the value $\varkappa=\dfrac{n-1}{n}$ is important, and for polynomials with real zeroes
the properties of $Q_{\tfrac{n-1}{n}}[p]$ are close to the ones of $Q_{1}[p]$. However, Conjecture~\ref{Conjecture.Shapiro} is not true.

Indeed, consider the polynomial
\begin{equation}\label{counter1}
p(z)=(z^2+a^2)(z+a^2)(z-1),\qquad a\in\mathbb{R}\backslash\{-1,0,1\},
\end{equation}
of degree $4$. It has two distinct real zeroes, $-a^2$ and $1$, and two non-real zeroes $\pm ia$, so $Z_{C}(p)=2$. For this polynomial,
the function~$Q_{\tfrac{n-1}{n}}[p]$ has the form
\begin{equation*}
Q_{\tfrac34}[p](z)=-\dfrac34\cdot\dfrac{(a^2-1)^2z^4-8a^2(a^2-1)z^3-2a^2(a^4-10a^2+1)z^2+8a^4(a^2-1)z+a^4(a^2-1)^2}{(z^2+a^2)^2(z+a^2)^2(z-1)^2}.
\end{equation*}
This rational function has four zeroes
\begin{equation*}
\lambda_1=\lambda_2=\dfrac{a(a+1)}{a-1},\quad\lambda_3=\lambda_4=-\dfrac{a(a-1)}{a+1},
\end{equation*}
all of which are real whenever $a\in\mathbb{R}\backslash\{-1,1\}$, so
\begin{equation}\label{counter1.ineq}
Z_{\mathbb{R}}\left(Q_{\tfrac{3}{4}}[p]\right)=4>2=Z_{\mathbb{C}}(p),
\end{equation}
and Conjecture~\ref{Conjecture.Shapiro} fails. 

\begin{remark}
Conjecture 11 in~\cite{Shapiro}, in fact, looks as follows
\begin{equation*}
Z_{C}\left(F_{\tfrac{n-1}{n}}[p]\right)\leqslant Z_{C}(p)
\end{equation*}
with additional condition that all real zeroes of $p$ are simple. It is easy to see that this conjecture is equivalent to Conjecture~\ref{Conjecture.Shapiro}
in the considered case, since $Z_{\mathbb{R}}\left(F_{\tfrac{n-1}{n}}[p]\right)=Z_{\mathbb{R}}\left(F_{\tfrac{n-1}{n}}[p]\right)$ whenever real zeroes of~$p$ are simple.
\end{remark}

Let us look at inequality~\eqref{counter1.ineq} from the point of view of the function $R$ defined in~\eqref{function.R}. For the polynomial~\eqref{counter1},
the rational function $R$ has the form
\begin{equation*}
R(z)=\dfrac{p(z)p''(z)}{[p'(z)]^2}=\dfrac{6z(2z-1+a^2)(z^2+a^2)(z+a^2)(z-1)}{(4z^3-3z^2+3z^2a^2-a^2+a^4)^2}.
\end{equation*}
From~\eqref{counter1.ineq} it follows that the equation
\begin{equation*}
R(z)=\dfrac{3}{4}
\end{equation*}
has $4$ real solutions. The function~$R$ is drawn at Fig.~\ref{pic.2} for $a=10$.
\begin{figure}[ht]
\centering \includegraphics[scale=0.28]{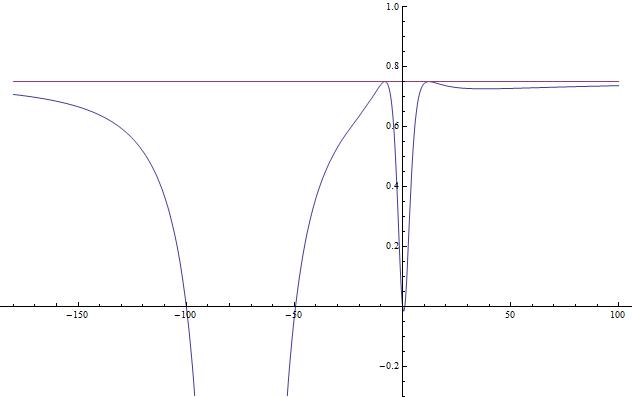} \caption{Function $R$ for the polynomial~\eqref{counter1}.}
\label{pic.2}
\end{figure}
It is easy to see that $R$ has two maxima with maximum values $\dfrac34$. In fact, simple calculations show that $R$ has two
maximum points at $-\dfrac{a(a-1)}{a+1}$ and $\dfrac{a(a+1)}{a-1}$ with the value $\dfrac34$.

Thus, according to this counterexample we conjecture another inequality generalizing the Hawaii conjecture.
\begin{conjecture}\label{Conjecture.new}
Let $p$ be  a real polynomial of degree $n$, $n\geqslant2$. Then
\begin{equation*}
Z_{C}(p)-Z_{C}(p')\leqslant Z_{R}(Q_{\varkappa}[p])\leqslant Z_{C}(p)
\end{equation*}
for $\varkappa>\dfrac{n-1}{n}$.
\end{conjecture}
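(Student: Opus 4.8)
The plan is to prove both inequalities by induction on $n=\deg p$, exploiting the reduction $p\mapsto p'$ together with the substitution $\varkappa\mapsto 2-\tfrac1{\varkappa}$ already present in~\eqref{Q.hat}. The decisive point is that this substitution is \emph{threshold preserving}: a direct computation shows that $\varkappa>\tfrac{n-1}{n}$ if and only if $2-\tfrac1{\varkappa}>\tfrac{n-2}{n-1}$, and $\tfrac{n-2}{n-1}$ is exactly the threshold $\tfrac{(n-1)-1}{n-1}$ attached to the polynomial $p'$ of degree $n-1$. Thus the hypothesis $\varkappa>\tfrac{\deg-1}{\deg}$ is reproduced at every level of differentiation, and since $\widehat{Q}_{\varkappa}[p](z)=Q_{2-1/\varkappa}[p'](z)$ by~\eqref{Q.hat}, the real zeroes of $\widehat{Q}_{\varkappa}[p]$ are precisely the quantity $Z_{\mathbb{R}}(Q_{2-1/\varkappa}[p'])$ to which the inductive hypothesis applies. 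I would first dispose of the base cases $n=2$ and $\deg p=1$ by direct computation: for a linear polynomial $\ell$ one has $Q_{\tilde\varkappa}[\ell]=-\tilde\varkappa[\ell'(z)]^2/[\ell(z)]^2$, which has no zeroes, so that $Z_{\mathbb{R}}(Q_{\tilde\varkappa})=0=Z_C(\ell)$, while the quadratic case is elementary.

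For the upper bound the induction reduces to the single interval estimate
\[
Z_{\mathbb{R}}(Q_{\varkappa}[p])\leqslant Z_{\mathbb{R}}(\widehat{Q}_{\varkappa}[p])+\bigl(Z_C(p)-Z_C(p')\bigr),
\]
because combining it with the inductive hypothesis $Z_{\mathbb{R}}(\widehat{Q}_{\varkappa}[p])=Z_{\mathbb{R}}(Q_{2-1/\varkappa}[p'])\leqslant Z_C(p')$ gives $Z_{\mathbb{R}}(Q_{\varkappa}[p])\leqslant Z_C(p)$. The correction term is nonnegative by Rolle's theorem, which yields $Z_{\mathbb{R}}(p')\geqslant Z_{\mathbb{R}}(p)-1$ and hence $Z_C(p')\leqslant Z_C(p)$; moreover, telescoped through $p,p',\dots,p^{(n-1)}$ it sums exactly to $Z_C(p)$. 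To prove the displayed estimate I would partition $\mathbb{R}$ at the real zeroes of $p$, $p'$ and $p''$ and apply Proposition~\ref{lemma.55} on each subinterval. On every subinterval bounded by consecutive real zeroes of $p'$ lying between real zeroes of $p$, the polynomial behaves locally as in the real-rooted situation, so the refined estimates of Lemmata~\ref{Lemma.main.ends} and~\ref{Lemma.main.mids} should transfer essentially verbatim and the spurious $+1$ of Proposition~\ref{lemma.55} cancels; the surviving $+1$'s must be confined to the finitely many intervals adjacent to a conjugate pair of non-real zeroes of $p$, and the crux is to count them as exactly $Z_C(p)-Z_C(p')$.

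The lower bound $Z_C(p)-Z_C(p')\leqslant Z_{\mathbb{R}}(Q_{\varkappa})$ I would approach directly through the residue structure of $R$ rather than through the same induction, which does not telescope in the favourable direction. By Theorem~\ref{Theorem.equations.equivalence} the real zeroes of $Q_{\varkappa}$ are the real solutions of $R(z)=\varkappa$, and near a simple real pole $\mu_j$ of $R$ the leading Laurent coefficient equals $p(\mu_j)/p''(\mu_j)$. For real-rooted $p$ de Gua's rule forces this to be negative, so that $R\to-\infty$ at every pole (the concave picture of Theorem~\ref{Lemma.concave.R}); but a conjugate pair of non-real zeroes of $p$ can turn a residue positive, making $R\to+\infty$ there and forcing $R(z)=\varkappa$ to have real solutions nearby for \emph{every} finite $\varkappa$. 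The lower bound should then follow by controlling the real solutions of $R(z)=\varkappa$ through the sign pattern of the leading coefficients $p(\mu_j)/p''(\mu_j)$ at consecutive poles, the number of positive ones being governed below by $Z_C(p)-Z_C(p')$; I expect this sign-pattern statement to be provable by a Sturm-type or winding argument.

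The step I expect to be the main obstacle is common to both bounds: the local analysis near the non-real zeroes of $p$. The entire Section~\ref{section:real.zeroes} rests on the concavity of $R$ between its poles (Theorem~\ref{Lemma.concave.R}) and on de Gua's rule, and both fail the moment $p$ acquires non-real zeroes. Consequently the cancellation bookkeeping of Lemmata~\ref{Lemma.main.ends}--\ref{Lemma.main.mids} must be replaced, on the intervals contaminated by a conjugate pair, by a new local lemma controlling the real solutions of $R(z)=\varkappa$ there---most plausibly a continuity and perturbation argument in $\varkappa$ that tracks how real solutions are created or destroyed as $\varkappa$ decreases from $+\infty$ towards $\tfrac{n-1}{n}$.
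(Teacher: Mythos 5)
The statement you are addressing is Conjecture~\ref{Conjecture.new}, which the paper leaves open: no proof of it appears anywhere in the text, and the authors state explicitly that it is known only for $\varkappa=1$, where it is the Hawaii conjecture settled in~\cite{Tyaglov_Hawaii} by a long and laborious argument. So there is no proof in the paper to compare yours against; the only question is whether your proposal closes the conjecture, and it does not --- it is a programme whose two load-bearing steps are precisely the open content. Your upper bound hinges on the displayed estimate $Z_{\mathbb{R}}(Q_{\varkappa}[p])\leqslant Z_{\mathbb{R}}(\widehat{Q}_{\varkappa}[p])+\bigl(Z_{C}(p)-Z_{C}(p')\bigr)$, which is never proved. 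The tools available cannot deliver it: Proposition~\ref{lemma.55} gives only the crude ``$+1$ per interval'' bound on intervals free of zeroes of $p$, $p'$, $p''$, while the refined cancellation of Theorem~\ref{Theorem.ineq.Q.Q.hat.greater.1/2} and Lemmata~\ref{Lemma.main.ends}--\ref{Lemma.main.mids} is established only for real-rooted $p$, and only in the range $\tfrac12<\varkappa<\tfrac{n-1}{n}$ rather than the range $\varkappa>\tfrac{n-1}{n}$ of the conjecture; its proof rests on the concavity of $R$ (Theorem~\ref{Lemma.concave.R}) and on de Gua's rule, both of which, as you yourself observe, fail once $p$ has non-real zeroes. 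Asserting that these lemmata ``transfer essentially verbatim'' away from the conjugate pairs, and that ``the crux'' is to confine the surplus $+1$'s to exactly $Z_{C}(p)-Z_{C}(p')$ intervals, restates the difficulty rather than resolving it; for $\varkappa=1$ this bookkeeping is exactly what occupies the long proof in~\cite{Tyaglov_Hawaii}. The parts you do verify --- the threshold computation $\varkappa>\tfrac{n-1}{n}\Longleftrightarrow 2-\tfrac1{\varkappa}>\tfrac{n-2}{n-1}$, the identity $\widehat{Q}_{\varkappa}[p]=Q_{2-1/\varkappa}[p']$, the base cases, and the telescoping of $Z_{C}(p^{(k)})-Z_{C}(p^{(k+1)})$ --- are correct but routine.

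The lower bound has the same character, and one of its local claims is false as stated. You reduce it to a sign-pattern assertion --- that the number of real poles $\mu_j$ of $R$ with $p(\mu_j)/p''(\mu_j)>0$ is controlled from below by $Z_{C}(p)-Z_{C}(p')$ --- and then say you ``expect'' this to follow from a Sturm-type or winding argument; that expectation is the missing proof. Moreover, a double pole of $R$ with positive leading Laurent coefficient forces $R\to+\infty$ on both sides of $\mu_j$, hence real solutions of $R(z)=\varkappa$ nearby only for $\varkappa$ exceeding the infimum of $R$ on the adjacent intervals, not ``for every finite $\varkappa$'' as you write; to exploit this for all $\varkappa>\tfrac{n-1}{n}$ you would additionally need to show those infima never exceed $\tfrac{n-1}{n}$, which is again unproved (on bounded intervals between two such poles nothing like the limit relation $R(x)\to\tfrac{n-1}{n}$ is available). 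In short, your inductive scaffolding via $p\mapsto p'$ and $\varkappa\mapsto 2-\tfrac1{\varkappa}$ is sound and consistent with the paper's machinery, but both pivotal lemmata are left as expectations, so the conjecture remains open.
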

\noindent This conjecture is proved only for the case $\varkappa=1$, see~\cite{Tyaglov_Hawaii}.

For $\varkappa\leqslant\dfrac{n-1}{n}$, it is not easy to predict the estimates for the number of real zeroes of $Q_{\varkappa}[p]$. However, calculations
show that the following conjecture for a special case of polynomials may be true.
\begin{conjecture}\label{Conjecture.new.2}
Let $p$ be a real polynomial of degree $n=2m$ with only non-real zeroes, and let $p'$ have only real zeroes. Then
\begin{equation*}
Z_{R}(Q_{\varkappa}[p])=Z_{C}(p)\qquad\text{for}\quad\varkappa>\dfrac{n-1}{n},
\end{equation*}
\begin{equation*}
Z_{C}(p)-2\leqslant Z_{R}(Q_{\varkappa}[p])\leqslant Z_{C}(p)\qquad\text{for}\quad\dfrac12<\varkappa\leqslant\dfrac{n-1}{n},
\end{equation*}
and
\begin{equation*}
Z_{R}(Q_{\varkappa}[p])=Z_{C}(p)-2\qquad\text{for}\quad\varkappa\leqslant\dfrac{1}{2}.
\end{equation*}
\end{conjecture}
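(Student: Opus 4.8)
The plan is to reduce the whole statement to counting the real solutions of the equation $R(x)=\varkappa$, with $R=pp''/(p')^2$ as in~\eqref{function.R}, and then to read those counts off an explicit partial-fraction form of $R$. Since $p$ has only non-real zeroes, $p(x)\neq0$ on $\mathbb{R}$, so $Z_{\mathbb{R}}(Q_{\varkappa})=Z_{\mathbb{R}}(F_{\varkappa})$ equals the number of real roots of $R(x)=\varkappa$, and $Z_C(p)=n=2m$. Because $p'$ is real-rooted, de Gua's rule applied to $p'$ shows that $p''=(p')'$ is real-rooted too and (in the generic case I would treat first) that the zeroes $\mu_1<\cdots<\mu_{2m-1}$ of $p'$ are simple, the two sets of zeroes interlacing. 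Repeating verbatim the computation in the proof of Theorem~\ref{Lemma.concave.R}, which used only that the $\mu_j$ are simple poles of $p/p'$, gives
\begin{equation*}
R(x)=\dfrac{n-1}{n}+\sum_{j=1}^{2m-1}\dfrac{r_j}{(x-\mu_j)^2},\qquad r_j=\dfrac{p(\mu_j)}{p''(\mu_j)}.
\end{equation*}
The new structural point is the sign pattern of the residues: as $p$ is everywhere positive and of even degree, its critical points alternate between local minima and maxima, beginning and ending with a minimum, so there are $m$ minima with $r_j>0$ and $m-1$ maxima with $r_j<0$. Hence $R$ has poles tending to $+\infty$ at the minima and to $-\infty$ at the maxima, in strict alternation, while $R(x)\to\frac{n-1}{n}$ as $x\to\pm\infty$.

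The lower bound I expect to hold in all three ranges. On each of the $2m-2$ interior intervals $(\mu_j,\mu_{j+1})$ the function $R$ runs between opposite infinities at the two endpoints, so by the intermediate value theorem $R(x)=\varkappa$ has at least one root there for \emph{every} real $\varkappa$; this already yields $Z_{\mathbb{R}}(Q_{\varkappa})\geqslant2m-2=Z_C(p)-2$ unconditionally, matching the claimed lower bounds. For the exact values in the two extreme ranges I would pass to the auxiliary function $\widehat{Q}_{\varkappa}[p]=Q_{2-1/\varkappa}[p']$ of~\eqref{Q.hat}, whose advantage here is that $p'$ \emph{is} real-rooted, so the machinery of Section~\ref{section:real.zeroes} applies to it directly. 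When $\varkappa\leqslant\frac12$ one has $\beta\eqdef2-\frac1\varkappa\leqslant0$, and Theorem~\ref{Theorem.real.zeroes.nonpositive.kappa} applied to $p'$ shows $\widehat{Q}_{\varkappa}[p]$ has only real (simple) zeroes; combining their known location with the local comparison $Z_{(a,b)}(Q_{\varkappa})\leqslant1+Z_{(a,b)}(\widehat{Q}_{\varkappa})$ of Proposition~\ref{lemma.55} on the cells cut out by the $\mu_j$ and the zeroes of $p''$ should force the two outer intervals to carry no root and each interior interval exactly one, giving $Z_{\mathbb{R}}(Q_{\varkappa})=2m-2$. When $\varkappa>\frac{n-1}{n}$ one has $\beta>\frac{n-2}{n-1}$, so Theorem~\ref{Theorem.number.complex.zeroes.n-1.n} applied to $p'$ gives $Z_{\mathbb{R}}(\widehat{Q}_{\varkappa})=0$, and the same cell-by-cell bookkeeping (now each of the two outer intervals also contributing a root, since $R\to\frac{n-1}{n}<\varkappa$ at $\pm\infty$) should give $Z_{\mathbb{R}}(Q_{\varkappa})=2m=Z_C(p)$.

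The step I expect to be the main obstacle is the middle range $\frac12<\varkappa\leqslant\frac{n-1}{n}$, and I am in fact doubtful that the inequality $Z_{\mathbb{R}}(Q_{\varkappa})\leqslant Z_C(p)$ survives there. The difficulty is that, unlike the real-rooted case of Theorem~\ref{Lemma.concave.R}, the residues $r_j$ now have mixed signs, so $R$ is \emph{not} concave between its poles and, on the two outer intervals, may approach the asymptote $\frac{n-1}{n}$ from below: the sign of the subleading coefficient $\sigma\eqdef\sum_j p(\mu_j)/p''(\mu_j)$ decides this, and when $\sigma<0$ the graph of $R$ dips below $\frac{n-1}{n}$ and then climbs back to it, producing a local minimum and hence \emph{two extra} real roots for every $\varkappa$ lying between that minimum and $\frac{n-1}{n}$. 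Concretely, for $p(z)=\tfrac14 z^4-\tfrac12 z^2+1$ (all four zeroes non-real, $p'=z^3-z$ real-rooted, so $n=2m=4$ and $Z_C(p)=4$) one computes $\sigma=-\tfrac14$, and for $\varkappa$ slightly below $\tfrac34=\tfrac{n-1}{n}$ the equation $R(x)=\varkappa$ has six real solutions, so $Z_{\mathbb{R}}(Q_{\varkappa})=6>4=Z_C(p)$. This suggests that the middle inequality must be amended — for instance by adding the hypothesis $\sigma\geqslant0$, or by replacing the threshold $\frac{n-1}{n}$ with the least value of the local minima of $R$ on the two unbounded intervals — and that the clean values at $\varkappa=\frac{n-1}{n}$ are genuinely fragile; pinning down the correct statement, rather than proving the present one, is where the real work lies.
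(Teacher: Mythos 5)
You should first be clear about the status of this statement: it is one of the paper's \emph{open conjectures}. The paper gives no proof of it and only remarks that the case $\varkappa=1$ is known from~\cite{CCS_1987,Tyaglov_Hawaii}, so there is no argument of the paper to compare yours against. Your proposal, in turn, does not prove the statement --- and rightly so, because your counterexample to the middle inequality is correct: the conjecture as printed is false on the range $\tfrac12<\varkappa<\tfrac{n-1}{n}$.

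I checked your example in detail. For $p(z)=\tfrac14z^4-\tfrac12z^2+1$ one has $p'(z)=z(z-1)(z+1)$ with only real (simple) zeroes, the zeroes of $p$ satisfy $z^2=1\pm i\sqrt3$ and are all non-real, $p>0$ on $\mathbb{R}$, and $Z_{C}(p)=4=n$. A direct computation gives
\[
F_{\varkappa}[p](z)=\left(\tfrac34-\varkappa\right)z^6+\left(2\varkappa-\tfrac74\right)z^4+\left(\tfrac72-\varkappa\right)z^2-1.
\]
At $\varkappa=\tfrac34$ this equals $-\tfrac14z^4+\tfrac{11}{4}z^2-1$, with four real zeroes (from $z^2=(11\pm\sqrt{105})/2>0$), so at the endpoint $\varkappa=\tfrac{n-1}{n}$ the conjectured bound holds with equality. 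Now put $\varkappa=\tfrac34-\epsilon$, $\epsilon>0$, and $t=z^2$; then $F_{\varkappa}[p](z)=h(t)$ with
\[
h(t)=\epsilon t^3-\left(\tfrac14+2\epsilon\right)t^2+\left(\tfrac{11}{4}+\epsilon\right)t-1,
\]
and for $0<\epsilon<\tfrac1{1100}$ one finds $h(0)=-1<0$, $h(1)=\tfrac32>0$, $h(11)=1100\epsilon-1<0$, and $h(t)\to+\infty$ as $t\to+\infty$. Hence $h$ has three simple positive roots, so $F_{\varkappa}[p]$ has six distinct real zeroes; since $p\neq0$ on $\mathbb{R}$, all of them are zeroes of $Q_{\varkappa}[p]$, whence $Z_{\mathbb{R}}(Q_{\varkappa})=6>4=Z_{C}(p)$ for every $\varkappa\in\left(\tfrac34-\tfrac1{1100},\tfrac34\right)$. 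Your structural diagnosis is also sound: the residues $p(\mu_j)/p''(\mu_j)$ have mixed signs (positive at the $m$ minima of $p$, negative at the $m-1$ maxima), so $R$ is not concave between its poles, and since $\sigma=\sum_j p(\mu_j)/p''(\mu_j)=-\tfrac14<0$ the function $R$ approaches $\tfrac{n-1}{n}$ from below on the unbounded intervals, producing local minima below $\tfrac{n-1}{n}$ and therefore two extra real solutions of $R=\varkappa$ for all nearby $\varkappa$. The failure is thus generic for $\sigma<0$, not an accident of this polynomial.

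Two further remarks. First, your lower bound is correct and complete as you state it: consecutive poles of $R$ (at the zeroes of $p'$) blow up to $+\infty$ and $-\infty$ in alternation, so the intermediate value theorem yields a solution of $R=\varkappa$ in each of the $2m-2$ bounded intervals between consecutive zeroes of $p'$ for \emph{every} real $\varkappa$, and each such solution is a zero of $Q_{\varkappa}$ because $p\neq0$ on $\mathbb{R}$; this proves $Z_{\mathbb{R}}(Q_{\varkappa})\geqslant Z_{C}(p)-2$ unconditionally (at least when the zeroes of $p'$ are simple; note that the hypotheses do allow multiple zeroes of $p'$, e.g.\ $p(z)=z^4+1$, a case your partial-fraction setup does not cover). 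Second, your arguments for the two exact equalities ($\varkappa\leqslant\tfrac12$ and $\varkappa>\tfrac{n-1}{n}$) are only sketches: the cell-by-cell application of Proposition~\ref{lemma.55} is described as what ``should'' happen rather than carried out, so those parts remain open --- though your example is consistent with both of them, since it gives $Z_{\mathbb{R}}(Q_{4/5})=4$ and $Z_{\mathbb{R}}(Q_{1/2})=2$. The net conclusion of your work is that Conjecture~\ref{Conjecture.new.2} must be amended in the range $\tfrac12<\varkappa<\tfrac{n-1}{n}$, for instance along the lines you propose: either assume $\sigma\geqslant0$, or replace the threshold $\tfrac{n-1}{n}$ by the least local minimum value of $R$ on the two unbounded intervals.
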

\noindent In~\cite{CCS_1987,Tyaglov_Hawaii} it was established that this conjecture is true for $\varkappa=1$.

Note that the polynomial $p$ defined in~\eqref{counter1} does not satisfy the conditions of
Conjecture~\ref{Conjecture.new.2} since its derivative always has non-real zeroes whenever $a\neq\pm1,0$.
\setcounter{equation}{0}
\section{Conclusion and open problems}\label{section:open.problems}

In the present work, we find the upper and lower bounds for the number of non-real zeroes of the
differential polynomial
\begin{equation*}
F_{\varkappa}[p](z)=p(z)p''(z)-\varkappa[p'(z)]^2,
\end{equation*}
whenever $p$ has only real zeroes. We also disprove a conjecture by B.\,Shapiro~\cite{Shapiro} on the number
of real zeroes of $F_{\varkappa}[p]$ for arbitrary real polynomial $p$. Instead, we provide two
new conjectures that generalise the Hawaii conjecture~\cite{CCS_1987} proved in~\cite{Tyaglov_Hawaii}.
We believe that our method of combining Proposition~\ref{lemma.55} and some properties of the
function $R$ defined in~\eqref{function.R} can be useful for proof of these conjectures
and might provide a new, more simple, proof to the Hawaii conjecture.

Finally, we note that it is intuitively clear that our results of Section~\ref{section:real.zeroes} can be
extended to the case when $p$ is an entire function in a subclass of the Laguerre-P\'olya class $\mathcal{L-P}$ 
(or even if $p$ belongs to a subclass of $U_{2p}$, see, e.g.,~\cite{EdwardsHellerstein} and references there).
For example, it must be true for entire functions whose supremum of multiplicities of their zeroes is finite,
in particular, if they have finitely many zeroes or only simple zeroes.

\section*{Acknowledgement}
The work of M.\,Tyaglov was partially supported by The Program for Professor of Special Appointment (Oriental Scholar) at Shanghai Institutions of Higher Learning and by National Natural Science Foundation
of China under Grant No. 11871336.

M.\,Tyaglov thanks Anna Vishnyakova and Alexander Dyachenko for helpful discussions.

M.J.~Atia acknowledges the hospitality of the School of Mathematical Sciences of Shanghai Jiao Tong University in June 2015 and June 2016 partially supported by The Program for Professor of Special Appointment (Oriental Scholar) at Shanghai Institutions of Higher Learning of M. Tyaglov.

\appendix

\setcounter{equation}{0}

\section{Appendix: Proof of Proposition~\ref{lemma.55}}\label{section:appendix.finite.intervals}

Let~$p$ be a real polynomial of degree $n$, $n\geqslant2$. Recall that by $F_{\varkappa}[p]$ we
denote the following polynomial
\begin{equation}\label{F.function.1}
F_{\varkappa}[p](z)=p(z)p''(z)-\varkappa\left[p'(z)\right]^2,
\end{equation}
defined in~\eqref{main.polynomial.F}. By $\widehat{F}_{\varkappa}[p]$ we denote the polynomial
\begin{equation}\label{F.function.2}
\widehat{F}_{\varkappa}[p](z)=F_{2-\tfrac1{\varkappa}}[p'](z)=p'(z)p'''(z)-\left(2-\dfrac1{\varkappa}\right)\left[p''(z)\right]^2.
\end{equation}
The functions $Q_{\varkappa}[p]$ and $\widehat{Q}_{\varkappa}[p]$ are defined in~\eqref{main.function.Q}
and~\eqref{Q.hat}, respectively.

Our first result is about the number of zeroes of $Q_{\varkappa}[p]$ on a finite
interval free of zeroes of the functions $Q_{\varkappa}[p]$, $p'$,
$p''$ and $\widehat{Q}_{\varkappa}[p]$.

\begin{lemma}\label{lemma.3}
Let $p$ be a real polynomial, $\varkappa>0$, $a,b\in\mathbb{R}$, and let
$p(z)\neq0$, $p'(z)\neq0$, $p''(z)\neq0$,
$\widehat{Q}_{\varkappa}[p](z)\neq0$ in the~interval~$(a,b)$. Suppose additionally that
if $p(b)\neq0$ then $p'(b)\neq0$ as well.
\begin{itemize}
\item [\emph{I.}] If, for all sufficiently small $\delta>0$,
\begin{equation}\label{lemma.3.condition.1}
p'(a+\delta)p''(a+\delta)Q_{\varkappa}[p](a+\delta)\widehat{Q}_{\varkappa}[p](a+\delta)>0,
\end{equation}
then $Q_{\varkappa}[p]$ has no zeroes in $(a,b]$.
\item [\emph{II.}] If, for all sufficiently small $\delta>0$,
\begin{equation}\label{lemma.3.condition.2}
p'(a+\delta)p''(a+\delta)Q_{\varkappa}[p](a+\delta)\widehat{Q}_{\varkappa}[p](a+\delta)<0,
\end{equation}
then $Q_{\varkappa}[p]$ has at most one zero in $(a,b)$, counting multiplicities.
Moreover, if~$Q_{\varkappa}[p](\zeta)=0$ for some $\zeta\in(a,b)$, then
$Q_{\varkappa}[p](b)\neq0$ \emph{(}if $Q_{\varkappa}[p]$ is finite at $b$\emph{)}.
\end{itemize}
\end{lemma}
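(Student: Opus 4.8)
The plan is to reduce the whole statement to a question about how the function $R[p](z)=\dfrac{p(z)p''(z)}{[p'(z)]^2}$ crosses the level $\varkappa$ on $(a,b)$. Since $p$ and $p'$ do not vanish there, Theorem~\ref{Theorem.equations.equivalence} identifies the zeroes of $Q_{\varkappa}[p]$ in $(a,b)$ with the solutions of $R[p](z)=\varkappa$, and the identity $Q_{\varkappa}[p]=\dfrac{[p']^2}{p^2}\big(R[p]-\varkappa\big)$ shows that near such a point the sign of $Q_{\varkappa}[p]$ agrees with that of $R[p]-\varkappa$. Likewise the zeroes of $\widehat{Q}_{\varkappa}[p]$ correspond to $R[p'](z)=2-\tfrac1\varkappa$; since $\widehat{Q}_{\varkappa}[p]$ does not vanish on $(a,b)$ (and $p'\neq0$ excludes its poles), it keeps a constant sign there, as do $p$, $p'$ and $p''$.

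The computational heart is the Riccati-type identity
\[
R[p]'(z)=\frac{p'(z)}{p(z)}\,R[p](z)\Big(R[p](z)R[p'](z)-2R[p](z)+1\Big),
\]
valid wherever $p,p'\neq0$, which I would derive from $R[p]=v/u$ with $u=p'/p$, $v=p''/p'$, using $u'=u^2(R[p]-1)$, $v'=v^2(R[p']-1)$ and $v=uR[p]$. Evaluating it at a zero $\zeta$ of $Q_\varkappa[p]$, where $R[p](\zeta)=\varkappa$, and substituting $R[p']-2+\tfrac1\varkappa=\dfrac{[p']^2}{[p'']^2}\widehat{Q}_\varkappa[p]$, gives
\[
R[p]'(\zeta)=\frac{p'(\zeta)}{p(\zeta)}\,\varkappa^2\,\frac{[p'(\zeta)]^2}{[p''(\zeta)]^2}\,\widehat{Q}_\varkappa[p](\zeta).
\]
Because $\widehat{Q}_\varkappa[p](\zeta)\neq0$, this is nonzero; hence every zero of $Q_\varkappa[p]$ in $(a,b)$ is simple, $Q_\varkappa[p]$ genuinely changes sign there, and the sign of $Q_\varkappa[p]$ immediately to the right of any such $\zeta$ equals $\sgn\big(p\,p'\,\widehat{Q}_\varkappa[p]\big)$, a quantity that is \emph{constant} on $(a,b)$. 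Thus all zeroes of $Q_\varkappa[p]$ on $(a,b)$ are sign changes in one and the same direction, and since two consecutive zeroes of a continuous function must be crossings of opposite direction, there can be at most one of them. This already yields the multiplicity-one count of part~II.

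It then remains to decide, from the sign hypothesis at $a^{+}$, whether the single admissible crossing actually occurs. After dividing out the constant sign of the accompanying factors, that hypothesis fixes $\sgn Q_\varkappa[p](a+\delta)$ for small $\delta$, i.e.\ the constant sign of $Q_\varkappa[p]$ on the initial piece preceding any zero; the two cases correspond to this sign being equal to, respectively opposite to, the common post-crossing sign $\sgn\big(p\,p'\,\widehat{Q}_\varkappa[p]\big)$. In the first case a crossing (which would have to carry $Q_\varkappa[p]$ \emph{up from} the opposite sign) is impossible, giving no zeroes in $(a,b)$ and hence part~I; in the second case exactly one crossing is permitted, giving part~II. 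For the endpoint $b$ I would use the auxiliary hypothesis that $p'(b)\neq0$ whenever $p(b)\neq0$: it guarantees that at any $b$ where $Q_\varkappa[p]$ is finite the function $R[p]$ is regular, so the same crossing-direction rigidity applies up to $b$. A zero of $Q_\varkappa[p]$ at $b$ would force the pre-crossing sign just to the left of $b$, contradicting the constant post-crossing sign already established on $(\zeta,b)$ (respectively on all of $(a,b)$ in part~I); this excludes $b$ in part~I and proves $Q_\varkappa[p](b)\neq0$ in part~II.

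The hard part is precisely this last step: matching the sign hypothesis to the pre/post-crossing dichotomy and disposing of the endpoint $b$. The delicacy is that the signs of $p$, $p'$, $p''$ enter jointly through their relative configuration on $(a,b)$; that the one-sided phrase ``for all sufficiently small $\delta$'' must be used to extract a genuine one-sided sign at $a$ even when $a$ is itself a zero of $p$, $p'$, $p''$ or $\widehat{Q}_\varkappa[p]$; and that the case where $b$ is a pole of $Q_\varkappa[p]$ rather than a regular point has to be separated out by means of the hypothesis on $b$. Once the crossing direction has been pinned down by the Riccati identity, however, each of these cases collapses to the elementary alternating-sign observation used above.
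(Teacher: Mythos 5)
Your core mechanism is sound and is, at bottom, the same as the paper's. The Riccati identity you derive is correct, and evaluating it at a zero $\zeta$ of $Q_{\varkappa}$ is equivalent to the paper's key formula \eqref{main.work.formula}: since $F_{\varkappa}=[p']^2\bigl(R[p]-\varkappa\bigr)$ wherever $p'\neq0$, one has $\sgn F_{\varkappa}'(\zeta)=\sgn R[p]'(\zeta)$, and because $p(\zeta)p''(\zeta)=\varkappa[p'(\zeta)]^2>0$ at any such point, your crossing direction $\sgn\bigl(p\,p'\,\widehat{Q}_{\varkappa}\bigr)$ coincides there with the paper's $\sgn\bigl(p'\,p''\,\widehat{Q}_{\varkappa}\bigr)$; the conclusion ``all zeroes in $(a,b)$ are simple crossings in one common direction, hence at most one'' is exactly how the paper passes from Part~I to Part~II. (Incidentally, your appeal to Theorem~\ref{Theorem.equations.equivalence} is unnecessary and, strictly speaking, unavailable here, since that theorem assumes $p$ real-rooted while the lemma concerns arbitrary real $p$; the identity $Q_{\varkappa}=\frac{[p']^2}{p^2}\bigl(R[p]-\varkappa\bigr)$ that you also state already does the job.) There is, however, a genuine slip in your dictionary between the hypotheses \eqref{lemma.3.condition.1}--\eqref{lemma.3.condition.2}, which involve $p''$, and your quantity $\sgn\bigl(p\,p'\,Q_{\varkappa}\widehat{Q}_{\varkappa}\bigr)$, which involves $p$: the claimed correspondence holds only when $pp''>0$ on $(a,b)$, and is false when $pp''<0$ there. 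That case is harmless --- then $Q_{\varkappa}<0$ identically on $(a,b)$ by \eqref{main.function.Q}, and $Q_{\varkappa}(b)\neq0$ follows from the hypothesis $p(b)\neq0\Rightarrow p'(b)\neq0$ --- but it must be split off explicitly; as written, your correspondence statement is not true.

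The substantive gap is the endpoint $b$. Your argument treats a zero of $Q_{\varkappa}$ at $b$ as one more transversal crossing, but nothing in the hypotheses forbids $\widehat{Q}_{\varkappa}(b)=0$: nonvanishing of $\widehat{Q}_{\varkappa}$ is assumed only on the \emph{open} interval, and indeed in the intended application (Theorem~\ref{lemma.5}) the lemma is invoked on intervals whose right endpoint \emph{is} a zero of $\widehat{Q}_{\varkappa}$. If $\widehat{Q}_{\varkappa}(b)=0$, your own identity gives $R[p]'(b)=0$, so the zero of $Q_{\varkappa}$ at $b$ may have order $\geqslant2$ and need not change sign; in particular $Q_{\varkappa}$ can approach $0$ at $b$ from the ``post-crossing'' side with an even-order tangency, which the alternating-sign observation cannot exclude. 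This is precisely where the paper does real work: from \eqref{new} it shows that $F_{\varkappa}'$ and $\widehat{F}_{\varkappa}$ have zeroes of the \emph{same order} at $b$, deduces the sign relation \eqref{additional.work.formula.1}--\eqref{lemma.3.proof.2} just to the left of $b$, and contradicts analyticity of $F_{\varkappa}$ (a real-analytic $f$ with $f(b)=0$ satisfies $f(b-\varepsilon)f'(b-\varepsilon)<0$ for small $\varepsilon$). The analogous repair exists in your framework --- write $h=R[p]-\varkappa$, expand your identity as $h'=\frac{p'}{p}\,R[p]\Bigl[h\,\bigl(R[p']-2\bigr)+\varkappa\,\frac{[p']^2}{[p'']^2}\,\widehat{Q}_{\varkappa}\Bigr]$, and compare orders of vanishing at $b$ to conclude $\sgn h'=\sgn\bigl(pp'\widehat{Q}_{\varkappa}\bigr)$ near $b^{-}$, contradicting $hh'<0$ --- but this order-matching step is indispensable; it does not, as you assert, ``collapse to the elementary alternating-sign observation.'' Without it, the exclusion of $b$ in Part~I and the final claim of Part~II remain unproved.
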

\begin{proof} The condition $p(z)\neq0$ for $z\in(a,b)$ means
that $Q_{\varkappa}[p](z)$ is finite at every point of~$(a,b)$. Note that
from~\eqref{F.function.1} it follows that
\begin{equation*}
p'(z)=\dfrac{p(z)p''(z)}{\varkappa p'(z)}-\dfrac{F_{\varkappa}[p](z)}{\varkappa p'(z)},
\end{equation*}
since $p'(z)\neq0$ for $z\in(a,b)$ by assumption. Substituting this expression into  formula~\eqref{F.function.2},
we obtain
\begin{equation}\label{new}
\begin{array}{c}
\widehat{F}_{\varkappa}[p](z)=\dfrac{p(z)p''(z)p'''(z)}{\varkappa p'(z)}-
(2\varkappa-1)\cdot\dfrac{p'(z)p''(z)p''(z)}{\varkappa p'(z)}-\dfrac{F_{\varkappa}[p](z)p'''(z)}{\varkappa p'(z)}=\\
\\
=\dfrac{p''(z)}{\varkappa p'(z)}\cdot
F'_{\varkappa}[p](z)-\dfrac{F_{\varkappa}[p](z)p'''(z)}{\varkappa p'(z)}.
\end{array}
\end{equation}

If $\zeta\in(a,b)$ and $Q_{\varkappa}[p](\zeta)=0$, then $F_{\varkappa}[p](\zeta)=0$
and~\eqref{new} implies
\begin{equation}\label{main.work.formula}
\widehat{F}_{\varkappa}[p](\zeta)=\dfrac{p''(\zeta)}{\varkappa p'(\zeta)}F'_{\varkappa}[p](\zeta).
\end{equation}
Since $p'(z)\neq0,p''(z)\neq0,\widehat{Q}_{\varkappa}[p](z)\neq0$ (and
therefore $\widehat{F}_{\varkappa}[p](z)\neq0$) in $(a,b)$ by assumption,
from~\eqref{main.work.formula} it follows that $\zeta$ is a~simple
zero of $Q_{\varkappa}[p]$. That is, all zeroes of $Q_{\varkappa}[p]$ in~$(a,b)$ are simple.


\begin{itemize}
\item[I.] Let inequality~\eqref{lemma.3.condition.1} hold.
Assume that, for all sufficiently small~$\delta>0$,
\begin{equation}\label{lemma.3.proof.1}
p'(a+\delta)p''(a+\delta)\widehat{Q}_{\varkappa}[p](a+\delta)>0,
\end{equation}
then $Q_{\varkappa}[p](a+\delta)>0$, that is, $F_{\varkappa}[p](a+\delta)>0$. Therefore, if
$\zeta$ is the leftmost zero of $Q_{\varkappa}[p]$ in~$(a,b)$, then
$F_{\varkappa}'[p](\zeta)<0$. This inequality
contradicts~\eqref{main.work.formula}, since
\begin{equation*}
p'(z)p''(z)\widehat{Q}_{\varkappa}[p](z)>0
\end{equation*}
for $z\in(a,b)$, which follows from~\eqref{lemma.3.proof.1} and
from the assumption of the lemma. Consequently, the function $Q_{\varkappa}[p]$~cannot have
zeroes in the interval~$(a,b)$ if the
inequalities~\eqref{lemma.3.condition.1}
and~\eqref{lemma.3.proof.1} hold. In the same way, one can prove
that if $ p'(a+\delta)p''(a+\delta)\widehat{Q}_{\varkappa}[p](a+\delta)<0$
for all sufficiently small~$\delta>0$ and if the
inequality~\eqref{lemma.3.condition.1} holds, then $Q_{\varkappa}[p](z)\neq0$
for~$z\in(a,b)$.

Thus, $Q_{\varkappa}[p]$ has no zeroes in the interval~$(a,b)$ if the
inequality~\eqref{lemma.3.condition.1} holds. Moreover, it is easy
to show that $Q_{\varkappa}[p](b)\neq0$ as well. Indeed, let, on the contrary,
$Q_{\varkappa}[p](b)=0$. Then $p(b)\neq0$, therefore,
$p'(b)\neq0$ by assumption. So we obtain $F_{\varkappa}[p](b)=0$ and, from~\eqref{F.function.1},
$p''(b)\neq0$. Thus, we have $(pp'p'')(b)\neq0$. From~\eqref{new}--\eqref{main.work.formula} it follows
that the functions $F'_\varkappa[p]$ and $\widehat{F}_{\varkappa}[p]$ have a zero of the same order at~$b$. In particular, $F'_{\varkappa}[p](b)\neq0$
if and only if $\widehat{F}_{\varkappa}[p](b)\neq0$. Furthermore, it is clear that the order of the zero of $F'_{\varkappa}[p]$
(and $\widehat{F}_{\varkappa}[p]$) at $b$ is strictly smaller than the order of the zero of the polynomial $F_{\varkappa}[p]\cdot p'''$ at $b$. Consequently,
from~\eqref{new}--\eqref{main.work.formula} we obtain, for all sufficiently small~$\varepsilon>0$,
\begin{equation}\label{additional.work.formula.1}
\sgn\left(\dfrac{p'(b-\varepsilon)}
{p''(b-\varepsilon)}\widehat{F}_{\varkappa}[p](b-\varepsilon)\right)=\sgn(F'_{\varkappa}[p](b-\varepsilon)).
\end{equation}
But if the
inequality~\eqref{lemma.3.condition.1} holds, then
\begin{equation}\label{lemma.3.proof.2}
\sgn\left(\dfrac{p'(b-\varepsilon)}
{p''(b-\varepsilon)}\widehat{F}_{\varkappa}[p](b-\varepsilon)\right)=\sgn(F_{\varkappa}[p](b-\varepsilon))
\end{equation}
for all sufficiently small $\varepsilon>0$, since
$p'(z)\neq0$, $p''(z)\neq0$, $\widehat{Q}_{\varkappa}[p](z)\neq0$ in the
interval~$(a,b)$ by assumption and since $Q_{\varkappa}[p](z)\neq0$ in~$(a,b)$,
which was proved above. So, if the
inequality~\eqref{lemma.3.condition.1} holds and if $F_{\varkappa}[p](b)=0$, then
from~\eqref{additional.work.formula.1} and~\eqref{lemma.3.proof.2}
we obtain that
\begin{equation*}
F_{\varkappa}[p](b-\varepsilon)F'_{\varkappa}[p](b-\varepsilon)>0
\end{equation*}
for all sufficiently small $\varepsilon>0$. This inequality
contradicts the analyticity\footnote{If a real function $f$ is
analytic at some neighbourhood of a real point $a$ and equals zero
at this point, then, for all sufficiently small $\varepsilon>0$,
\begin{equation*}
f(a-\varepsilon)f'(a-\varepsilon)<0.
\end{equation*}
} of the polynomial~$F_{\varkappa}[p]$. Since $Q_{\varkappa}[p](b)$ exists, $\widehat{Q}_{\varkappa}[p](b)$ is finite by
assumption  of the lemma, so everything established above is true
for the functions~$Q_{\varkappa}[p]$ and $\widehat{Q}_{\varkappa}[p]$. Therefore, if the
inequality~\eqref{lemma.3.condition.1} holds and if $Q_{\varkappa}[p]$ is finite
at~the~point~$b$, then $Q_{\varkappa}[p](b)\neq0$. Thus, the first part of the
lemma is proved.
%
%
\item[II.] Let inequality~\eqref{lemma.3.condition.2}
hold, then $Q_{\varkappa}[p]$ can have zeroes in $(a,b)$. But it cannot have
more than one zero, counting multiplicity. In fact, if $\zeta$ is
the leftmost zero of $Q_{\varkappa}[p]$ in $(a,b)$, then this zero is simple as
we proved above. Therefore, the following inequality holds for all
sufficiently small $\varepsilon>0$
\begin{equation*}
p'(\zeta+\varepsilon)p''(\zeta+\varepsilon)Q_{\varkappa}[p](\zeta+\varepsilon)\widehat{Q}_{\varkappa}[p](\zeta+\varepsilon)>0.
\end{equation*}
Consequently, $Q_{\varkappa}[p]$ has no zeroes in $(\zeta,b]$ according to Case~I
of the lemma.
\end{itemize}
\end{proof}

Thus, we have found out that $Q_{\varkappa}[p]$ has at most one real zero,
counting multiplicity, in an interval where the functions
$p$, $p'$, $p''$ and $\widehat{Q}_{\varkappa}[p]$ have no real zeroes.
Now we study multiple zeroes of $Q_{\varkappa}[p]$ and its zeroes common with one
of the above-mentioned functions. From~\eqref{main.function.Q} it
follows that all zeroes of $p'$ of multiplicity at least $2$
that are not zeroes of $p$ are also zeroes of $Q_{\varkappa}[p]$ and all
zeroes of $p'$ of multiplicity at least $3$ that are not
zeroes of $p$ are multiple zeroes of $Q_{\varkappa}[p]$. The following
lemma provides information about common zeroes of $Q_{\varkappa}[p]$ and $\widehat{Q}_{\varkappa}[p]$.

\begin{lemma}\label{lemma.4}
Let $p$ be a real polynomial, $\varkappa>0$, $a,b\in\mathbb{R}$ be real and let
$p(z)\neq0$, $p'(z)\neq0$, $p''(z)\neq0$ in the
interval~$(a,b)$. Suppose that $\widehat{Q}_{\varkappa}[p]$ has a unique zero
$\xi\in(a,b)$ of multiplicity  $M$ in~$(a,b)$, and suppose
additionally that $p'(b)\neq0$ if $p(b)\neq0$.

If~$Q_{\varkappa}[p](\xi)=0$, then $\xi$ is a zero of $Q_{\varkappa}[p]$ of multiplicity $M+1$,
and $Q_{\varkappa}[p](z)\neq0$ for $z\in(a,\xi)\cup(\xi,b]$.
\end{lemma}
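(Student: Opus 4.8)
The plan is to work throughout with the polynomials $F_\varkappa$ and $\widehat F_\varkappa$ rather than with $Q_\varkappa,\widehat Q_\varkappa$: since $p\neq0$ and $p'\neq0$ on $(a,b)$, the function $Q_\varkappa$ (resp. $\widehat Q_\varkappa$) vanishes at a point of $(a,b)$ exactly to the order that $F_\varkappa$ (resp. $\widehat F_\varkappa$) does. The whole argument rests on the identity~\eqref{new}, which after clearing denominators reads $\varkappa p'\widehat F_\varkappa=p''F_\varkappa'-p'''F_\varkappa$, together with its specialization~\eqref{main.work.formula} at zeros of $F_\varkappa$.

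For the multiplicity statement I would first compare orders of vanishing at $\xi$ in the cleared identity. Writing $m=\operatorname{ord}_\xi F_\varkappa$ (which is $\geq1$ since $Q_\varkappa(\xi)=0$), the right-hand side $p''F_\varkappa'-p'''F_\varkappa$ has order exactly $m-1$: its first term has order $m-1$ because $p''(\xi)\neq0$, while the second has order $\geq m$, so no cancellation occurs. The left-hand side has order $\operatorname{ord}_\xi\widehat F_\varkappa=M$ because $p'(\xi)\neq0$. Equating gives $m=M+1$, so $\xi$ is a zero of $Q_\varkappa$ of multiplicity exactly $M+1$. Reading off the leading coefficients in the same comparison also yields the relation $d=\dfrac{(M+1)p''(\xi)}{\varkappa p'(\xi)}c$, where $c,d$ denote the leading Taylor coefficients of $F_\varkappa,\widehat F_\varkappa$ at $\xi$; this relation is what drives the sign bookkeeping below.

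To show $Q_\varkappa\neq0$ on $(\xi,b]$ I would evaluate the sign of $p'p''Q_\varkappa\widehat Q_\varkappa$ just to the right of $\xi$. Using the local forms $F_\varkappa\sim c(z-\xi)^{M+1}$, $\widehat F_\varkappa\sim d(z-\xi)^M$ together with the relation for $d$, that product equals a positive multiple of $(z-\xi)^{2M+1}$, hence is positive at $z=\xi+\delta$. This is precisely hypothesis~\eqref{lemma.3.condition.1} of Lemma~\ref{lemma.3} with left endpoint $\xi$, and Case~I of that lemma gives no zeros of $Q_\varkappa$ on $(\xi,b]$.

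The interval $(a,\xi)$ is the real obstacle, because Lemma~\ref{lemma.3} is one-sided (it tests the sign at the \emph{left} endpoint) whereas here the usable sign information sits at the right endpoint $\xi$; so I would argue directly. On $(a,\xi)$ all of $p',p'',\widehat Q_\varkappa$ are nonvanishing, so~\eqref{main.work.formula} shows every zero $\zeta$ of $F_\varkappa$ there is simple and satisfies $\sgn F_\varkappa'(\zeta)=\sgn\bigl(p'(\zeta)p''(\zeta)\widehat Q_\varkappa(\zeta)\bigr)$, a quantity of constant sign on $(a,\xi)$. Since an analytic function cannot have two consecutive simple zeros with the same sign of derivative, $F_\varkappa$ has at most one zero on $(a,\xi)$. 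Finally I would exclude even that one by a parity check: if $\zeta\in(a,\xi)$ were a zero, then $F_\varkappa$ keeps one sign on $(\zeta,\xi)$, whence $\sgn F_\varkappa'(\zeta)=\sgn F_\varkappa(\xi^-)=\sgn(c)(-1)^{M+1}$; on the other hand the constant sign of $\widehat Q_\varkappa$ on $(a,\xi)$ is $\sgn(d)(-1)^M$, and substituting this together with $\sgn d=\sgn\bigl(p'(\xi)p''(\xi)c\bigr)$ into the derivative-sign formula gives $\sgn F_\varkappa'(\zeta)=\sgn(c)(-1)^{M}$. The two values differ by a sign, a contradiction, so $Q_\varkappa\neq0$ on $(a,\xi)$ as well, completing the proof.
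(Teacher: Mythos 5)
Your proof is correct, and it shares the paper's skeleton: everything flows from identity~\eqref{new}, the multiplicity $M+1$ at $\xi$ is established first, and the interval $(\xi,b]$ is finished off with Case~I of Lemma~\ref{lemma.3}. But you deviate in two genuine ways. For the multiplicity, the paper rewrites~\eqref{new} as $\varkappa\,\dfrac{p'(z)}{[p''(z)]^2}\,\widehat F_\varkappa(z)=\Bigl(\dfrac{F_\varkappa(z)}{p''(z)}\Bigr)'$ and differentiates it repeatedly, whereas you compare orders of vanishing in the cleared identity $\varkappa p'\widehat F_\varkappa=p''F_\varkappa'-p'''F_\varkappa$; the two are equivalent, but your version also hands you the leading-coefficient relation $d=\dfrac{(M+1)p''(\xi)}{\varkappa p'(\xi)}\,c$, which you then exploit twice. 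More substantially, your treatment of $(a,\xi)$ is a from-scratch argument (all zeros of $F_\varkappa$ there are simple with one and the same sign of derivative, hence there is at most one, and the parity comparison of $\sgn(c)(-1)^{M+1}$ against $\sgn(c)(-1)^{M}$ kills that one), whereas the paper gets this part for free from Lemma~\ref{lemma.3}: your remark that the lemma ``only tests the left endpoint'' overlooks the ``moreover'' clause of Case~II, which states exactly that a zero of $Q_\varkappa$ in the interior forces the right endpoint not to be a zero --- applied on $(a,\xi)$ with right endpoint $\xi$, it instantly excludes any zero in $(a,\xi)$ since $Q_\varkappa(\xi)=0$. Finally, for the sign condition~\eqref{lemma.3.condition.1} just to the right of $\xi$, the paper argues by parity (the product $p'p''Q_\varkappa\widehat Q_\varkappa$ is negative on $(a,\xi)$ because Case~II must hold there, and it flips sign at $\xi$ where it vanishes to the odd order $2M+1$), while you compute the sign directly from $c$ and $d$; your computation makes this step independent of the analysis of $(a,\xi)$, at the cost of extra bookkeeping. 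Both routes are sound: yours is more quantitative and self-contained, the paper's is shorter because it reuses Lemma~\ref{lemma.3} wherever possible.
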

\begin{proof} The condition $p(z)\neq0$ for $z\in(a,b)$ means
that $Q_{\varkappa}[p]$ is finite at every point of~$(a,b)$.

By assumption, $\xi$ is a zero of $\widehat{F}_{\varkappa}[p]$ of multiplicity $M$ and
$F_{\varkappa}[p](\xi)=0$. First, we prove that $\xi$ is a~zero of~$F_{\varkappa}[p]$
of~multiplicity~$M+1$.

Note that the expression~\eqref{new} can be rewritten in the form
\begin{equation*}
\varkappa\dfrac{p'(z)}{\left[p''(z)\right]^2}\widehat{F}_{\varkappa}[p](z)=\left(\dfrac{F_{\varkappa}[p](z)}{p''(z)}\right)',
\end{equation*}
since $p'(z)\neq0$ and $p''(z)\neq0$ for $z\in(a,b)$ by assumption.
Differentiating this equality $j$ times with respect~to~$z$, we
get
\begin{equation}\label{derivatives}
\varkappa\left(\dfrac{p'(z)}{\left[p''(z)\right]^2}~\widehat{F}_{\varkappa}[p](z)\right)^{(j)}=\left(\dfrac{F_{\varkappa}[p](z)}{p''(z)}\right)^{(j+1)}.
\end{equation}
From~\eqref{derivatives} it follows that  $F_{\varkappa}^{(j+1)}[p](\xi)=0$ if
$p'(\xi)\neq0$, $p''(\xi)\neq0$, $\widehat{F}^{(i)}_{\varkappa}[p](\xi)=0$
and $F_{\varkappa}^{(i)}[p](\xi)=0$, $i=0,1,\ldots,j$. Consequently, $\xi$ is a
zero of $F_{\varkappa}[p]$ of multiplicity at least $M+1$. But by assumptions,
\eqref{derivatives} implies the following formula
\begin{equation*}
0\neq p'(\xi)\widehat{F}_{\varkappa}^{(M)}[p](\xi)=p''(\xi)F_{\varkappa}^{(M+1)}[p](\xi).
\end{equation*}
Hence, $\xi$ is a zero of $F_{\varkappa}[p]$ of multiplicity exactly $M+1$. But
$p(\xi)\neq0$ by assumption, therefore,~$\xi$~is a~zero of
$Q_{\varkappa}[p]$ of multiplicity $M+1$.

It remains to prove that $Q_{\varkappa}[p]$ has no zeroes in $(a,b]$ except
$\xi$. In fact, consider the interval~$(a,\xi)$. According to
Lemma~\ref{lemma.3}, $Q_{\varkappa}[p]$ can have a zero at $\xi$ only if the
inequality~\eqref{lemma.3.condition.2} holds and $Q_{\varkappa}[p](z)\neq0$ for
$z\in(a,\xi)$. Furthermore, the polynomial $p'\cdot p''$ does
not change its sign at $\xi$ but the function~$Q_{\varkappa}[p]\widehat{Q}_{\varkappa}[p]$ does, since
$\xi$ is a zero of~$Q_{\varkappa}[p]\cdot\widehat{Q}_{\varkappa}[p]$ of multiplicity~$2M+1$. Thus, for all
sufficiently small $\delta>0$,
\begin{equation}\label{additional.work.formula.2}
p'(\xi+\delta)p''(\xi+\delta)Q_{\varkappa}[p](\xi+\delta)\widehat{Q}_{\varkappa}[p](\xi+\delta)>0,
\end{equation}
since inequality~\eqref{lemma.3.condition.2} must hold in the
interval $(a,\xi)$ by Lemma~\ref{lemma.3}.
From~\eqref{additional.work.formula.2} it follows that Case~I of
Lemma~\ref{lemma.3} holds in the interval~$(\xi,b)$, so
$Q_{\varkappa}[p](z)\neq0$ for $z\in(\xi,b]$.
\end{proof}

Now combining the two last lemmas, we provide a general bound on
the number of real zeroes of~$Q_{\varkappa}[p]$ in terms of the number of real
zeroes of $\widehat{Q}_{\varkappa}[p]$ in a given interval.
\begin{theorem}\label{lemma.5}
Let $p$ be a real polynomial, $\varkappa>0$, and $a,b\in\mathbb{R}$.
If
$p(z)\neq0$, $p'(z)\neq0$ and $p''(z)\neq0$ for
$z\in(a,b)$, then
\begin{equation}\label{main.work.formula.1}
Z_{(a,b)}(Q_{\varkappa}[p])\leqslant1+Z_{(a,b)}\left(\widehat{Q}_{\varkappa}[p]\right).
\end{equation}
\end{theorem}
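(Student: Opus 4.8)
The plan is to subdivide $(a,b)$ by the zeroes of $\widehat{Q}_{\varkappa}$ and to bound the zeroes of $Q_{\varkappa}$ contributed by each piece using Lemmata~\ref{lemma.3} and~\ref{lemma.4}. Let $\xi_1<\cdots<\xi_s$ be the zeroes of $\widehat{Q}_{\varkappa}$ lying in $(a,b)$, with multiplicities $M_1,\dots,M_s$, so that $Z_{(a,b)}(\widehat{Q}_{\varkappa})=\sum_{i=1}^s M_i$, and set $\xi_0:=a$, $\xi_{s+1}:=b$. Since $p,p',p''$ do not vanish on $(a,b)$ and $\widehat{Q}_{\varkappa}$ has no zero strictly between consecutive $\xi_i$, each open subinterval $(\xi_j,\xi_{j+1})$ satisfies the hypotheses of Lemma~\ref{lemma.3}; whichever of the two sign conditions holds at its left end, the lemma yields that $Q_{\varkappa}$ has at most one zero (necessarily simple) on $(\xi_j,\xi_{j+1})$, for each $j=0,\dots,s$.

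Next I would account for the zeroes of $Q_{\varkappa}$ located at the points $\xi_i$ themselves. Put $K:=\{\,i\in\{1,\dots,s\}:Q_{\varkappa}(\xi_i)=0\,\}$ and $k:=|K|$. Applying Lemma~\ref{lemma.4} on the interval $(\xi_{i-1},\xi_{i+1})$, in which $\xi_i$ is the unique zero of $\widehat{Q}_{\varkappa}$, for each $i\in K$, shows that $\xi_i$ is a zero of $Q_{\varkappa}$ of multiplicity exactly $M_i+1$ and that $Q_{\varkappa}$ does not vanish on $(\xi_{i-1},\xi_i)\cup(\xi_i,\xi_{i+1})$. The decisive structural consequence is that $K$ contains \emph{no two consecutive indices}: if $i\in K$, then $Q_{\varkappa}\neq0$ on $(\xi_i,\xi_{i+1}]$, so $Q_{\varkappa}(\xi_{i+1})\neq0$ and $i+1\notin K$. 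Therefore the $2k$ subintervals $(\xi_{i-1},\xi_i)$ and $(\xi_i,\xi_{i+1})$ with $i\in K$ are pairwise distinct, and $Q_{\varkappa}$ vanishes on none of them.

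Combining these facts closes the count. Each $\xi_i$ with $i\in K$ contributes $M_i+1$, each $\xi_i$ with $i\notin K$ contributes $0$, and the only subintervals on which $Q_{\varkappa}$ may carry a zero are the $(s+1)-2k$ subintervals not adjacent to any $\xi_i$ with $i\in K$, each contributing at most $1$. Hence
\begin{equation*}
Z_{(a,b)}(Q_{\varkappa})\;\leqslant\;\sum_{i\in K}(M_i+1)\;+\;\bigl((s+1)-2k\bigr)\;=\;\sum_{i\in K}M_i+(s-k)+1.
\end{equation*}
Since every multiplicity is at least one, $s-k=\#\{\,i\notin K\,\}\leqslant\sum_{i\notin K}M_i$, and so
\begin{equation*}
Z_{(a,b)}(Q_{\varkappa})\;\leqslant\;\sum_{i\in K}M_i+\sum_{i\notin K}M_i+1\;=\;Z_{(a,b)}(\widehat{Q}_{\varkappa})+1,
\end{equation*}
which is precisely inequality~\eqref{main.work.formula.1}.

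The step I expect to be the main obstacle is the passage from the purely local statements of Lemmata~\ref{lemma.3} and~\ref{lemma.4} to this global count: the non-consecutiveness of $K$ is exactly what forces the killed subintervals to be disjoint, and without it the naive summation overcounts and the inequality fails to close. A minor technicality concerns the extreme blocks: the endpoint hypotheses ``$p'\neq0$ if $p\neq0$'' in the two lemmata are automatic at every interior $\xi_i$, but at $a$ and $b$ the non-vanishing of $p,p',p''$ is not assumed. Since $\widehat{Q}_{\varkappa}$ has no zero in $(\xi_s,b)$ nor in $(a,\xi_1)$, one applies the lemmata on $(\xi_{s-1},b'')$ and $(a'',\xi_2)$ for interior points $b''\in(\xi_s,b)$, $a''\in(a,\xi_1)$ and lets $b''\to b$, $a''\to a$; this recovers the bounds $Z_{(\xi_s,b)}(Q_{\varkappa})\leqslant1$ and $Z_{(a,\xi_1)}(Q_{\varkappa})\leqslant1$ and, when $s\in K$ or $1\in K$, the required non-vanishing on the adjacent open subintervals.
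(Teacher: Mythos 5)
Your proof is correct and follows essentially the same route as the paper's: both subdivide $(a,b)$ at the zeroes of $\widehat{Q}_{\varkappa}$, apply Lemma~\ref{lemma.3} on the zero-free subintervals and Lemma~\ref{lemma.4} at the common zeroes, and exploit the fact that after a common zero $\xi_i$ the function $Q_{\varkappa}$ cannot vanish again up to and including $\xi_{i+1}$ (your non-consecutiveness of $K$). The only difference is bookkeeping: the paper telescopes inequalities over half-open blocks $(\xi_{i-1},\xi_i]$ and $(\xi_{i-1},\xi_{i+1}]$, splitting cases according to whether $Q_{\varkappa}(\xi_r)=0$, whereas you tally all contributions globally via $k=|K|$ and the estimate $s-k\leqslant\sum_{i\notin K}M_i$, which is an equivalent count.
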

\begin{proof}
If $p(z)p''(z)<0$ in $(a,b)$, then $Q_{\varkappa}[p](z)<0$
for~$z\in(a,b)$ by~\eqref{main.function.Q}, that
is,~$Z_{(a,b)}(Q_{\varkappa}[p])=0$. Therefore, the
inequality~\eqref{main.work.formula.1} holds automatically in this
case.

Let now  $p(z)p''(z)>0$ for $z\in(a,b)$.
If~$\widehat{Q}_{\varkappa}[p](z)\neq0$ in~$(a,b)$, that is, $Z_{(a,b)}\left(\widehat{Q}_{\varkappa}[p]\right)=0$, then
by Lemma~\ref{lemma.3}, $Q_{\varkappa}[p]$ has at most one real zero, counting
multiplicity, in~$(a,b)$. Therefore,~\eqref{main.work.formula.1}
holds in this case.

If $\widehat{Q}_{\varkappa}[p]$ has a unique zero $\xi$ in~$(a,b)$ and $Q_{\varkappa}[p](\xi)\neq0$,
then by Lemma~\ref{lemma.3}, $Q_{\varkappa}[p]$ has at most one real zero,
counting multiplicity, in each interval $(a,\xi)$ and $(\xi,b)$:
\begin{equation}\label{lemma.5.proof.1}
Z_{(a,\xi)}(Q_{\varkappa}[p])\leqslant1+Z_{(a,\xi)}\left(\widehat{Q}_{\varkappa}[p]\right),
\end{equation}
where $Z_{(a,\xi)}(\widehat{Q}_{\varkappa}[p])=0$, and
\begin{equation}\label{lemma.5.proof.2}
Z_{(\xi,b)}(Q_{\varkappa}[p])\leqslant1+Z_{(\xi,b)}\left(\widehat{Q}_{\varkappa}[p]\right),
\end{equation}
where $Z_{(\xi,b)}(\widehat{Q}_{\varkappa}[p])=0$. Since $Q_{\varkappa}[p](\xi)\neq0$ and
$\widehat{Q}_{\varkappa}(\xi)=0$, we have
\begin{equation}\label{lemma.5.proof.3}
0=Z_{\{\xi\}}(Q_{\varkappa}[p])\leqslant-1+Z_{\{\xi\}}\left(\widehat{Q}_{\varkappa}[p]\right).
\end{equation}
Thus, summing the
inequalities~\eqref{lemma.5.proof.1}--\eqref{lemma.5.proof.3}, we
obtain~\eqref{main.work.formula.1}.

\vspace{2mm}

If $\widehat{Q}_{\varkappa}[p]$ has a unique zero $\xi$ in~$(a,b)$ and $Q_{\varkappa}[p](\xi)=0$, then,
by Lemma~\ref{lemma.4}, we have
\begin{equation*}\label{lemma.5.proof.4}
Z_{\{\xi\}}(Q_{\varkappa}[p])=1+Z_{\{\xi\}}\left(\widehat{Q}_{\varkappa}[p]\right),
\end{equation*}
and $Q_{\varkappa}[p](z)\neq0$ for $z\in(a,\xi)\cup(\xi,b)$. Therefore, the
inequality~\eqref{main.work.formula.1} is also true in this case.

\vspace{3mm}

Now, let $\widehat{Q}_{\varkappa}[p]$ have exactly $r\geqslant2$ \textit{distinct} real
zeroes, say $\xi_1<\xi_2<\ldots<\xi_r$, in the interval~$(a,b)$.
These zeroes divide $(a,b)$ into $r+1$ subintervals. If, for some
number $i$, $1\leqslant i\leqslant r$, $Q_{\varkappa}[p](\xi_i)\neq0$,
then by Lemma~\ref{lemma.3}, $Q_{\varkappa}[p]$ has \textit{at~most} one real
zero, counting multiplicity, in~$(\xi_{i-1},\xi_i]$
($\xi_0\stackrel{def}=a$). But~$\widehat{Q}_{\varkappa}[p]$ has \textit{at~least} one
real zero in $(\xi_{i-1},\xi_i]$, counting multiplicities (at the
point~$\xi_i$). Consequently,
\begin{equation}\label{intermediate.1.lemma.5}
Z_{(\xi_{i-1},\xi_i]}(Q_{\varkappa}[p])\leqslant Z_{(\xi_{i-1},\xi_i]}\left(\widehat{Q}_{\varkappa}[p]\right)
\end{equation}
If, for some number $i$, $1\leqslant i\leqslant r-1$, $Q_{\varkappa}[p](\xi_i)=0$
and $\xi_i$ is a zero of $\widehat{Q}_{\varkappa}[p]$ of multiplicity~$M$, then by
Lemma~\ref{lemma.4}, $Q_{\varkappa}[p]$ has only one zero $\xi_{i}$ of
multiplicity~$M+1$ in~$(\xi_{i-1},\xi_{i+1}]$. But in
the~interval~$(\xi_{i-1},\xi_{i+1}]$, $\widehat{Q}_{\varkappa}[p]$ has \textit{at
least} $M+1$ real zeroes, counting multiplicities (namely, $\xi_i$
which is a zero of multiplicity~$M$, and $\xi_{i+1}$). Therefore,
in this case, the following inequality holds
\begin{equation}\label{intermediate.2.lemma.5}
Z_{(\xi_{i-1},\xi_{i+1}]}(Q_{\varkappa}[p])\leqslant
Z_{(\xi_{i-1},\xi_{i+1}]}\left(\widehat{Q}_{\varkappa}[p]\right)
\end{equation}
Thus, if $Q_{\varkappa}[p](\xi_r)\neq0$, then
from~\eqref{intermediate.1.lemma.5}--\eqref{intermediate.2.lemma.5}
it follows that
\begin{equation}\label{intermediate.3.lemma.5}
Z_{(a,\xi_r]}(Q_{\varkappa}[p])\leqslant Z_{(a,\xi_r]}\left(\widehat{Q}_{\varkappa}[p]\right).
\end{equation}
But by Lemma~\ref{lemma.3}, $Q_{\varkappa}[p]$ has \textit{at~most} one real
zero, counting multiplicity, in the interval $(\xi_r,b)$.
Consequently, if $Q_{\varkappa}[p](\xi_r)\neq0$, then the
inequality~\eqref{main.work.formula.1} is valid.

If $Q_{\varkappa}[p](\xi_r)=0$, then by Lemma~\ref{lemma.4}, $Q_{\varkappa}[p](\xi_{r-1})\neq0$
(otherwise, $\xi_r$ cannot be a zero of $Q_{\varkappa}[p]$) and
from~\eqref{intermediate.1.lemma.5}--\eqref{intermediate.2.lemma.5}
it follows that
\begin{equation}\label{intermediate.4.lemma.5}
Z_{(a,\xi_{r-1}]}(Q_{\varkappa}[p])\leqslant Z_{(a,\xi_{r-1}]}\left(\widehat{Q}_{\varkappa}[p]\right).
\end{equation}
Now Lemma~\ref{lemma.4} implies
\begin{equation}\label{intermediate.5.lemma.5}
Z_{(\xi_{r-1},b)}(Q_{\varkappa}[p])=1+Z_{(\xi_{r-1},b)}\left(\widehat{Q}_{\varkappa}[p]\right),
\end{equation}
therefore, inequality~\eqref{main.work.formula.1} follows
from~\eqref{intermediate.4.lemma.5}--\eqref{intermediate.5.lemma.5}.
\end{proof}
%


\bibliographystyle{elsarticle-num}

\begin{thebibliography}{99}
%
\bibitem{Andrews_Askey_Roy}
\textsc{G.\,Andrews, R.\,Askey, and R.\,Roy},
\newblock \textit{Special functions},
\newblock Cambridge University Press, 1999.
%
\bibitem{Bergweiler_1995}
\textsc{W.\,Bergweiler},
\newblock  On the zeros of certain homogeneous differential polynomials,
\newblock  \emph{Arch. Math.}, \textbf{64}, 1995, pp.~199--202.
%
\bibitem{Borcea.Shapiro_2004}
\textsc{J.\,Borcea and B.\,Shapiro},
\newblock Classifying real polynomial pencils,
\newblock \textit{Int. Math. Res. Not.}, no.~69, 2004, pp.~3689--3708.
%
\bibitem{Craven_Csordas_89}
\textsc{T.\,Craven and G.\,Csordas},
\newblock  Jensen polynomials and the Tur\'an and Laguerre inequalities,
\newblock  \emph{Pacific J. Math.}, \textbf{136}, no.~2, 1989, pp.~241--260.
%
\bibitem{Craven_Csordas_02}
\textsc{T.\,Craven and G.\,Csordas},
\newblock  Iterated Laguerre and Tur\'an inequalities,
\newblock  \emph{J. Inequal. Pure Appl. Math.}, \textbf{3}, no.~3, 2002, art. 39, 14 pp. (electronic).
%
\bibitem{CCS_1987}
\textsc{G.\,Csordas, T.\,Craven, and W.\,Smith},
\newblock  The zeros of derivatives of entire functions and the Wiman-P\'olya conjecture,
\newblock  \emph{Ann. of Math.}, \textbf{125}, no.~2, 1987, pp.~405--431.
%
\bibitem{Csordas_2006}
\textsc{G.\,Csordas},
\newblock Linear operators and the distribution of zeros of entire functions,
\newblock  \textit{Complex Var. Elliptic Equ.}, \textbf{51}, no.~7, 2006, pp.~625--632.
%
\bibitem{Csordas_Escassut_05}
\textsc{G.\,Csordas and A.~\,Escassut},
\newblock  The Laguerre inequality and the distribution of zeros of entire functions,
\newblock  \emph{Ann. Math. Blaise Pascal}, \textbf{12}, no.~2, 2005, pp.~331--345.
%
\bibitem{Csordas_Ruttan_Varga_91}
\textsc{G.\,Csordas, A.\,Ruttan, and R.\,Varga},
\newblock  The Laguerre inequalities with applications to a problem associated with the Riemann hypothesis,
\newblock  \emph{Numer. Algorithms}, \textbf{1}, no.~3, 1991, pp.~305--329.
%
\bibitem{Csordas_Smith_Varga_92}
\textsc{G.\,Csordas, W.\,Smith, and R.\,Varga},
\newblock  Level sets of real entire functions and the Laguerre inequalities,
\newblock  \emph{Analysis}, \textbf{12}, no.~3--4, 1992, pp.~377--402.
%
\bibitem{Csordas_Varga_90}
\textsc{G.\,Csordas and R.\,Varga},
\newblock  Necessary and sufficient conditions and the Riemann hypothesis,
\newblock  \emph{Adv. in Appl. Math.}, \textbf{11}, no.~3, 1990, pp.~328--357.
%
\bibitem{Csordas_Vishnyakova_13}
\textsc{G.\,Csordas and A.~\,Vishnyakova},
\newblock  The generalized Laguerre inequalities and functions in the Laguerre-P\'olya class,
\newblock  \emph{Cent. Eur. J. Math.}, \textbf{11}, no.~9, 2013, pp.~1643--1650.
%
\bibitem{Dilcher}
\textsc{K.\,Dilcher},
\newblock  Real Wronskian zeros of polynomials with nonreal zeros,
\newblock  \emph{J. Math. Anal. Appl.}, \textbf{154}, 1991, pp.~164--183.
%
\bibitem{Dilcher_Stol}
\textsc{K.\,Dilcher and K.\,Stolarsky},
\newblock  Zeros of the Wronskian of a polynomial,
\newblock  \emph{J. Math. Anal. Appl.}, \textbf{162}, 1991, pp.~430--451.
%
\bibitem{EdwardsHellerstein}
{\sc S.\,Edwards and S.\,Hellerstein\/}, \textit{Non-real zeros of
derivatives of real entire functions and the P\'olya-Wiman
conjectures}, Complex Variables (1) \textbf{47} (2002),~25--57.
%
\bibitem{Edwards.Hinkkanen_2011}
\textsc{S.\,Edwards and A.\,Hinkkanen},
\newblock Level sets, a Gauss-Fourier conjecture, and a counter-example to a conjecture of Borcea and Shapiro,
\newblock \textit{Comput. Methods Funct. Theory}, \textbf{11}, no.~1, 2011, pp.~1--12.
%
\bibitem{Ki_Kim}
\textsc{H.\,Ki and Y.-O.\,Kim},
\newblock  On the number of nonreal zeros of real entire functions and the Fourier-P\'olya conjecture,
\newblock  \emph{Duke Math. J.}, \textbf{104}, no.~1, 2000, pp.~45--73.
%
\bibitem{Langley_1996}
\textsc{J.K.\,Langley},
\newblock  A lower bound for the number of zeros of a meromorphic function and its second
derivative,
\newblock  \emph{Proc. Edinburgh Math. Soc.}, \textbf{39}, 1996, pp.~171--185.
%
\bibitem{Langley_2011_1}
\textsc{J.K.\,Langley},
\newblock  Non-real zeros of real differential polynomials,
\newblock  \emph{Proc. Roy. Soc. Edinburgh Sect. A}, \textbf{141}, 2011, pp.~631--639.
%
\bibitem{Langley_2011_2}
\textsc{J.K.\,Langley},
\newblock  Non-real zeros of derivatives of real meromorphic functions of infinite order,
\newblock  \emph{Math. Proc. Cambridge Philos. Soc.}, \textbf{150}, 2011, pp.~343--351.
%
\bibitem{Love_1962}
\textsc{J.B.\,Love},
\newblock  Problem E1532,
\newblock  \emph{Amer. Math. Monthly}, \textbf{69}, 1962, p.~668.
%
\bibitem{Nicks_2012}
\textsc{D.\,Nicks},
\newblock  Non-real zeroes of real entire derivatives,
\newblock  \emph{J. Anal. Math.}, \textbf{117}, 2012, pp.~87--118.
%
\bibitem{Niculescu_2000}
\textsc{C.\,Niculescu},
\newblock  A new look at Newton's inqualities,
\newblock  \emph{J. Inequal. Pure and Appl. Math.}, \textbf{1}, no.2, 2000, art. 17 (electronic).
%
\bibitem{Obreschkoff}
\textsc{N.\,Obreschkoff},
\newblock  \textit{Verteilung und Berechnung der Nullstellen Releer Polynome},
\newblock  VEB Deutscher Verlag der Wissenschaften, Berlin, 1963.
%
\bibitem{Polya_1930}
\textsc{G.\,P\'olya},
\newblock  Some problems connected with Fourier's work on transcendental equations,
\newblock  \emph{Quart. J. Math. Oxford Ser.}, \textbf{1}, no.~1, 1930, pp.~21--34.
%
\bibitem{Polya_Schur}
\textsc{G.\,P\'olya and J.\,Schur},
\newblock  \"Uber zwei Arten von Faktorenfolgen in der Theorie der algebraischen Gleichungen,
\newblock  \emph{J. Reine Angew. Math} \textbf{144}, 1914, pp.~89--113.
%
\bibitem{Shapiro}
\textsc{B.\,Shapiro},
\newblock  Problems around polynomials: the good, the bad, and the ugly\ldots,
\newblock  \emph{Arnold Math. J.}, \textbf{1}, no.~1, 2015, pp.~91--99.
%
\bibitem{Tohge_1993}
\textsc{K.\,Tohge},
\newblock  On zeros of a homogeneous differential polynomial,
\newblock  \emph{Kodai Math. J.}, \textbf{16}, 1993, pp.~398--415.
%
\bibitem{Tyaglov_Hawaii}
\textsc{M.\,Tyaglov},
\newblock  On the number of critical points of logarithmic derivatives and the Hawaii conjecture,
\newblock  \emph{J. Anal. Math.}, \textbf{114}, 2011, pp.~1--62.
%
\bibitem{Sheil-Small_book}
\textsc{T.\,Sheil-Small},
\newblock \textit{Complex polynomials},
\newblock Cambridge Studies in Advanced Mathematics, \textbf{75}, Cambridge University Press, 2002.
\end{thebibliography}

\end{document}